\documentclass[reqno,11pt]{amsart}

\usepackage{xcolor}
\usepackage{graphicx}
\usepackage[hidelinks]{hyperref}

\usepackage{amscd}

\usepackage{amsmath}
\usepackage{amsthm}
\usepackage{amssymb}
\usepackage{latexsym,array}
\usepackage{amsfonts}
\usepackage{shadow}
\usepackage{amsbsy}
\usepackage{amssymb}
\usepackage{dsfont}
\usepackage{mathtools}
\usepackage{enumitem}
\usepackage{doi}

\usepackage{tikz-cd}
\usepackage{color}
\usepackage{graphicx}
\usepackage[hidelinks]{hyperref}
\usepackage{cleveref}
\usepackage{fullpage}
\usepackage{comment}

\usepackage{dsfont}

\usepackage{cleveref}

\usepackage{amscd}
\usepackage{tikz-cd}
\usepackage{tikz}

\newtheorem{theorem}{Theorem}[section]
\newtheorem{proposition}[theorem]{Proposition}
\newtheorem{lemma}[theorem]{Lemma}

\newtheorem{remark}[theorem]{Remark}
\newtheorem{definition}[theorem]{Definition}

\title[The resolvent equations for the Harmonic and bi-Harmonic functional calculi in dimension five]{The resolvent equations for the Harmonic and bi-Harmonic functional calculi in dimension  five}

\author[Fabrizio Colombo]{F. Colombo}
\address{(FC) Politecnico di Milano, Dipartimento di Matematica, Via E. Bonardi 9, 20133 Milano, Italy}
\email{fabrizio.colombo@polimi.it}

\author[Antonino De Martino]{A. De Martino}
\address{(ADM) Politecnico di Milano, Dipartimento di Matematica, Via E. Bonardi 9, 20133 Milano, Italy}
\email{antonino.demartino@polimi.it}

\author[Joao Marques Da Costa]{J. Marques Da Costa}
\address{(JM)  Politecnico di Milano, Dipartimento di Matematica, Via E. Bonardi 9, 20133 Milano, Italy}
\email{joao.marquesdacosta@polimi.it}

\begin{document}

	\begin{abstract}
		The fine structures on the $S$-spectrum constitute a new research area that includes a class of functional calculi based on the $S$-spectrum and on integral transforms determined by the Fueter--Sce mapping theorem and the Cauchy formula for slice hyperholomorphic functions.
		This strategy, based on integral transforms, allows us to construct functional calculi that include harmonic and polyharmonic functional calculi. The resolvent operators in this setting do not arise directly from a Cauchy kernel, but rather from suitable manipulations of it. For this reason the corresponding resolvent equations differ substantially from those associated with the classical Cauchy kernel. In this paper, we investigate the harmonic and biharmonic resolvent equations in dimension five, as well as the corresponding product rules and Riesz projectors for these functional calculi.
	\end{abstract}

	\maketitle
	
	AMS Classification 47A10, 47A60. \medskip
	
	Keywords: $S$-spectrum, Polyharmonic functional calculi, Fine structures, Fueter-Sce mapping theorem. \medskip
	
	\textbf{Acknowledgements:} Fabrizio Colombo and Antonino De Martino are supported by MUR grant Dipartimento di Eccellenza 2023-2027.
	
	\section{Introduction}

	The spectral theory on the $S$-spectrum has nowadays several applications.
	For an introduction to this theory, we refer the reader to \cite{FJBOOK, CGK, ColomboSabadiniStruppa2011}.
	Recently, a new branch of this spectral theory, focusing on the fine structures of the $S$-spectrum, has emerged. Quaternionic function theory and the associated operator theory are extensively treated in \cite{CDPS, CDS, CPS1, AD, DPS, Polyf1, Polyf2}, while the Clifford setting is discussed in \cite{CDP25, CDP2026, Fivedim}. Many developments in this field are currently in progress.
	The theory of fine structures involves function spaces linked to the Fueter-Sce mapping theorem (also known as the Fueter-Sce extension theorem),
	see \cite{Fueter, TaoQian1, Sce} and \cite{ColSabStrupSce, DDG1}.  In order to explain better this theorem we briefly recall the notion of Clifford Algebra.
	\newline
	\newline
	We denote by $\mathbb{R}_n$ the real Clifford algebra over $n$ imaginary units $e_1, \dots, e_n$ satisfying the relations $e_\ell e_m + e_m e_\ell = 0, \ell \neq m, e_\ell^2 = -1$. An element in the Clifford algebra is denoted by $\sum_A e_A x_A$ where $A = \{\ell_1 \dots \ell_r\} \in \mathcal{P}\{1, 2, \dots, n\}$, $\ell_1 < \dots < \ell_r$ is a multi-index and $e_A = e_{\ell_1} e_{\ell_2} \dots e_{\ell_r}, e_\emptyset = 1$.
	
	A point $(x_0, x_1, \dots, x_n) \in \mathbb{R}^{n+1}$ is identified with the element $x = x_0 + \underline{x} = x_0 + \sum_{j=1}^n x_j e_j \in \mathbb{R}_n$ called paravector. The real part $x_0$ of $x$ is also be denoted by $\text{Re}(x)$, and the vector part by $\underline{x} = x_1 e_1 + \dots + x_n e_n$. The conjugate of $x$ is denoted by $\bar{x} = x_0 - \underline{x}$ and the modulus is given by $|x|^2 = x_0^2 + \dots + x_n^2$.
	We introduce the notation $\mathbb{S}$ for the sphere of unit 1-vectors in $\mathbb{R}^n$, defined as
	$
	\mathbb{S}=\{ \underline{x}=e_1x_1+\ldots +e_nx_n\ :\ x_1^2+\ldots +x_n^2=1\}.
	$
	\newline
	\newline
	In the setting of Clifford algebras, two classes of functions play a prominent role: slice hyperholomorphic functions and monogenic functions (see Definitions \ref{sh} and \ref{am}, respectively). The Fueter-Sce mapping theorem bridges slice hyperholomorphic functions $\mathcal{SH}(U)$, defined on a suitable set $U\subseteq \mathbb{R}^{n+1}$, and  axially monogenic functions, this is a subclass of monogenic function  and will be denoted by $\mathcal{AM}(U)$.
	
	This connection is established using the second operator associated with the Fueter-Sce mapping theorem given by the operator $\Delta^{\frac{n-1}{2}}_{n+1}$, where $\Delta_{n+1}$ is the Laplacian in $n+1$ dimensions (see Theorem \ref{FSTheo}), with $n$ odd.
	Recall that the Dirac operator $D$ and its conjugate $\overline{D}$ are defined as:
	\begin{equation}\label{DIRACeBARDNN}
		D = \frac{\partial}{\partial x_0} + \sum_{i=1}^{n} e_i \frac{\partial}{\partial x_i}, \quad \text{and} \quad
		\overline{D} = \frac{\partial}{\partial x_0} - \sum_{i=1}^{n} e_i \frac{\partial}{\partial x_i}.
	\end{equation}
	A consequence of the Fueter-Sce mapping theorem is that:
	\begin{equation}\label{FSconsequence}
		D\Delta^{\frac{n-1}{2}}_{n+1}f(x)=0, \quad \text{for all } f\in \mathcal{SH}(U).
	\end{equation}

	We define the \textit{Dirac fine structures of the spectral theory on the $S$-spectrum} as the collection of function spaces and their corresponding functional calculi arising from the various factorizations of the operator $\Delta^{\frac{n-1}{2}}_{n+1}$ in terms of the Dirac operator $D$ and its conjugate $\overline{D}$.
	
	The operator $\Delta^{\frac{n-1}{2}}_{n+1}$ admits multiple factorizations based on the identity $D\overline{D} = \Delta_{n+1}$. For instance, one may employ alternating products or grouped powers:
	\[
	\Delta^{\frac{n-1}{2}}_{n+1}
	= \underbrace{(D \overline{D}) \cdots (D \overline{D})}_{(n-1)/2-\text{times}}
	\qquad \text{or} \qquad
	\Delta_{n+1}^{\frac{n-1}{2}}
	= \underbrace{D \cdots D}_{(n-1)/2- \text{times}} \cdot
	\underbrace{\overline{D} \cdots \overline{D}}_{(n-1)/2-\text{times}}.
	\]
	Moreover, since $D$ and $\overline{D}$ commute with their composition $\Delta_{n+1}$ (and indeed pairwise when acting on relevant function spaces), any permutation of the operators is permissible. This leads to the general factorization:
	\[
	\Delta_{n+1}^{\frac{n-1}{2}} = \mathcal{P}\Bigg( \underbrace{D, \dots, D}_{k\text{-times}}, \underbrace{\overline{D}, \dots, \overline{D}}_{k\text{-times}} \Bigg),
	\]
	where $k = \frac{n-1}{2}$ and $\mathcal{P}$ represents any arbitrary ordering of the $2k$ operators.
	
	Consequently, the fundamental building blocks of any Dirac-factorization of $\Delta_{n+1}^{\frac{n-1}{2}}$ take the form
	\begin{equation}\label{oper-D-Delta}
		D^\beta \Delta_{n+1}^m \qquad \text{and} \qquad \overline{D}^\beta \Delta_{n+1}^m,
	\end{equation}
	for suitable integers $\beta, m \in \mathbb{N}_0$. This observation highlights classes of functions, defined via the operators in \eqref{oper-D-Delta}, that plays a central role in this theory.
	Let $U$ be an open set in $\mathbb{R}^{n+1}$ and let $\beta, m \geq 0$ be integers.
	A function $f: U \to \mathbb{R}_n$ of class $\mathcal{C}^{2m+\beta}(U)$ is called
	axially Analytic-Harmonic function of type $(\beta, m)$ if it is of axial type and satisfies:
	\begin{equation}
		D^\beta \Delta_{n+1}^m f(x)=0, \qquad \forall x \in U \quad \left(\text{resp. } f(x)\Delta_{n+1}^m D^\beta=0, \quad \forall x \in U \right).
	\end{equation}
	We denote this class of functions by $\mathcal{AAH}_{\beta,m}(U)$
	The particular and important subclass of type $(0, 1)$ contains the harmonic functions, often denoted by
	$\mathcal{AH}(U)$, and the subclass of type $(0, m)$ consists of
	polyharmonic function of order $m\geq2$ which is commonly indicated as $\mathcal{APH}_m(U)$.

	\medskip
	Using the explicit expressions for the kernels $D^\beta \Delta^m_{n+1} S^{-1}_{L}(s,x)$ and $\overline{D}^\beta \Delta^m_{n+1} S^{-1}_{L}(s,x)$ (cf. \cite{DP2026}), the Cauchy formula for slice hyperholomorphic functions and the Fueter-Sce mapping theorem, we obtain an integral representation for axially Analytic-Harmonic functions $\mathcal{AAH}_{\beta,m}(U)$ of type $(\beta,m)$, with $\beta, m \in \mathbb{N}_0$. Namely, applying the operator $D^\beta \Delta^m_{n+1}$ to a left slice hyperholomorphic function $f(x)$ yields:
	\begin{equation}\label{INTEGRALREP}
		D^\beta \Delta_{n+1}^m f(x) = \frac{1}{2 \pi}\int_{\partial (U\cap \mathbb{C}_I)} D^\beta \Delta_{n+1}^m S_L^{-1}(s,x)\, ds_I\, f(s).
	\end{equation}
	These integral representations allow us  to define the functional calculi corresponding
	to all functions arising from the factorizations of $\Delta^{\frac{n-1}{2}}_{n+1}$.
	We define
	$$
	f_{\beta,m}(x):=D^\beta \Delta_{n+1}^m f(x), \ \ \  \ \ S_{D^\beta \Delta^m, L}^{-1}(s,x):=D^\beta \Delta_{n+1}^m S_L^{-1}(s,x),\ \ \ \ {\rm with} \ \ \ \beta, m \in \mathbb{N}_0
	$$
	and the related resolvent operators are obtained by the replacement of $x$ by $T$ in $S_{D^\beta \Delta^m,L}^{-1}(s,x)$ where $T$
	is a  bounded paravector operators with commuting components.
	Assuming the $T$ has the $S$-spectrum contained in a suitable open set $U$,
	the associated functional calculus reads as:
	\begin{equation}
		\label{finsfun}
		f_{\beta,m}(T) := \frac{1}{2\pi} \int_{\partial(U \cap \mathbb{C}_I)}S_{D^\beta \Delta^m, L}^{-1}(s,T) \, ds_I \, f(s).
	\end{equation}
	Similar expression is obtained for the right resolvent operators $S_{D^\beta \Delta^m, R}^{-1}(s,T)$ and the related functional calculi.
	This framework unifies various functional calculi based on the $S$-spectrum $\sigma_S(T)$. While $\sigma_S(T)$ was initially introduced for slice hyperholomorphic functions, it serves as the foundation for a richer hierarchy of calculi, including the poly-harmonic, poly-analytic (generalizing \cite{FISH,Aro}) and monogenic functional calculi.
	
	The resolvent operators within the fine structures do not come directly from a standard Cauchy formula, but from applying differential operators to the Cauchy kernel, as in the $F$-resolvent operator \cite{CDS1}.

	\medskip
	The $S$-functional calculus, originally defined for Clifford operators with non-commuting components, admits a commutative version for paravector operators $T $ with commuting components. In this setting, the F-spectrum $\sigma_F(T)$
	which is defined via the invertibility of the operator:
	\[
	\mathcal{Q}_{c,s}(T) = s^2 \mathcal{I} - s(T + \overline{T}) + T\overline{T},
	\]
	coincides with the S-spectrum $\sigma_S(T)$,
	that is defined via the invertibility of the operator
	\[
	\mathcal{Q}_s(T) := T^2 - 2\text{Re}(s)T + |s|^2.
	\]
	This leads to the definition of the commutative left and right S-resolvent operators, $S_L^{-1}(s,T)$ and $S_R^{-1}(s,T)$, and the commutative pseudo S-resolvent operator $\mathcal{Q}_{c,s}^{-1}(T)$, see \eqref{eq:1.7} and \eqref{eq:1.9}, respectively.
	
	A crucial fact of this calculus is that the S-resolvent equation differs significantly from the classical holomorphic case, see \cite{ACGS}. The equation relates the product $S_R^{-1}(s,T)S_L^{-1}(p,T)$ to the difference of the resolvents, entangled with the slice hyperholomorphic Cauchy kernel (cf.  Theorem \ref{Bres}).
	Unlike standard resolvent equations, this relation requires both left and right resolvent operators
	to preserve the appropriate slice hyperholomorphicity in the variables $s$ and $p$.
	This structure ensures that the $S$-functional calculus maintains the necessary analyticity properties.
	\newline
	In \cite{Polyf2}, a resolvent equation and a product formula were derived for a harmonic functional calculus in the quaternionic case ($n = 3$), arising from the factorization of the operator $\Delta_4$.
	
	Our main results focus on the resolvent equations for the harmonic and biharmonic functional calculi arising from the factorizations of the second map in the Fueter-Sce mapping theorem with $n=5$, i.e., of $\Delta_6^2$. Concretely, we derive the resolvent equation for the functional calculi in \eqref{finsfun} by considering the cases $\beta = 1$, $m = 0$, and $\beta = m = 1$. In these instances, the resolvent operators are given by
	\begin{equation}
		\label{oneH}
		S_{D, L}^{-1}(s,T) = S_{D, R}^{-1}(s,T) = -4\mathcal{Q}_{c,s}^{-1}(T),
	\end{equation}
	and
	\begin{equation}
		\label{secondH}
		S_{\Delta D, L}^{-1}(s,T) = S_{\Delta D, R}^{-1}(s,T) = 16 \mathcal{Q}_{c,s}^{-2}(T).
	\end{equation}
	In the case $n = 5$, equation \eqref{FSconsequence} implies that for all $f \in \mathcal{SH}(U)$ we have
	$$
	D \Delta^{2}_{6} f(x) = 0.
	$$
	Hence, the functional calculus in \eqref{finsfun} is harmonic when $\beta = m = 1$, and biharmonic when $\beta = 1$, $m = 0$.
	
	While the operators in \eqref{oneH} and \eqref{secondH} differ from the resolvent operators related with the standard Cauchy kernels, we recover several properties of the resolvent equations within hyperholomorphic spectral theory. We use the resolvent equations obtained to derive counterparts of the Riesz projectors for the harmonic and biharmonic functional calculi. Moreover, we prove the product rules
	formulas for these functional calculi by combining the $S$-functional calculus with the functional calculus derived from \eqref{finsfun}.

	\section{Preliminaries}
	
	In this section, we recall some basic properties of slice hyperholomorphic functions, fine structures, and the $S$-functional calculus, which will be used
	in the paper.
	
	\begin{definition}
		Let $ U \subset \mathbb{R}^{n+1}$
		\begin{itemize}
			\item The set $U$ is called \emph{axially symmetric} if $[x]:= \{y \in \mathbb{R}^{n+1} \, : \, y= \hbox{Re}(x)+I|\underline{x}|, \quad I \in \mathbb{S}\} \subset U$ for any $x \in U$.
			\item The set $U$ is called \emph{slice domain} if $U \cap \mathbb{R} \neq \emptyset$ and if $U \cap \mathbb{C}_I$ is a domain in $ \mathbb{C}_I$ for any $I \in \mathbb{S}$.
		\end{itemize}
	\end{definition}

	\begin{definition}
		\label{axial}
		Let $U \subseteq \mathbb{R}^{n+1}$ be an axially symmetric domain. We define
		\begin{equation}
			\label{set}
			\mathcal{U} := \{ (u,v) \in \mathbb{R}^2 \; : \; u + Iv \in U \text{ for all } I \in \mathbb{S} \}.
		\end{equation}
		A function $f : U \to \mathbb{R}^{n+1}$ is called a left (resp.\ right) axial function (or slice function) if it admits the representation
		\begin{equation}
			f(x) = A(x_0, |\underline{x}|) + \underline{\omega}\, B(x_0, |\underline{x}|),
			\qquad
			\underline{\omega} = \frac{\underline{x}}{|\underline{x}|},
		\end{equation}
		where $A, B : \mathcal{U} \to \mathbb{R}_n$ satisfy the so called even-odd conditions
		\begin{equation}
			\label{eo}
			A(x_0, |\underline{x}|) = A(x_0, -|\underline{x}|),
			\qquad
			B(x_0, |\underline{x}|) = -B(x_0, -|\underline{x}|),
			\qquad
			\forall (x_0, |\underline{x}|) \in \mathcal{U}.
		\end{equation}
	\end{definition}

	\begin{definition}[Slice hyperholomorphic functions]
		\label{sh}
		Let $U \subseteq \mathbb{R}^{n+1}$ be an axially symmetric open set. A function $f: U \to \mathbb{R}^{n+1}$ is called left (resp. right) slice hyperholomorphic function if it is slice (see Definition~\ref{axial}) and can therefore be written as
		\begin{equation}
			\label{slice}
			f(x)=\alpha(u,v)+I \beta(u,v), \qquad \left(\, \hbox{resp.} \, \, f(x)= \alpha(u,v)+\beta(u,v)I\right), \quad \hbox{for} \quad x=u+Iv \in U
		\end{equation}
		where $ \alpha$, $\beta: \mathcal{U} \to \mathbb{R}_n$ satisfy the even-odd conditions, see \eqref{eo}, and the Cauchy-Riemann equations
		$$ \partial_u \alpha(u,v)=\partial_v \beta(u,v), \qquad \partial_v \alpha(u,v)=-\partial_u \beta(u,v).$$
		The set of left (resp. right) slice hyperholomorphic functions is denoted by $ \mathcal{SH}_L(U)$ (resp. $\mathcal{SH}_R(U)$). We say $f\in \mathcal{N}(U)$, called intrinsic slice hyperholomorphic, if the functions $\alpha$ and $\beta$ in \eqref{slice} are real-valued.
	\end{definition}
	
	\begin{definition}[Slice Cauchy domain]
		An axially symmetric open set $U \subset \mathbb{R}^{n+1}$ is said to be a slice Cauchy domain if, for every $I \in \mathbb{S}$, the set $U \cap \mathbb{C}_I$ forms a Cauchy domain in $\mathbb{C}_I$. In other words, $U$ is a slice Cauchy domain if, for each $I \in \mathbb{S}$, the set $\partial (U \cap \mathbb{C}_I)$ is a finite union of pairwise disjoint piecewise $C^1$ Jordan curves.
	\end{definition}
	\begin{definition}[Slice hyperholomorphic Cauchy kernels]
		\label{Ckernel}
		Let $x$, $s\in \mathbb{R}^{n+1}$ such that $x\not\in [s]$. We say that  $S_L^{-1}(s,x)$ (resp.  $S_R^{-1}(s,x)$) is written in the form I if
		$$
		S_L^{-1}(s,x):=-(x^2 -2x {\rm Re} (s)+|s|^2)^{-1}(x-\overline s), \quad \left(	\hbox{resp.} \, \,	S_R^{-1}(s,x):=-(x-\bar s)(x^2-2{\rm Re}(s)x+|s|^2)^{-1} \right).
		$$
		We say that $S_L^{-1}(s,x)$ (resp.  $S_R^{-1}(s,x)$) is written in the form II if
		$$
		S_L^{-1}(s,x):=(s-\bar x)(s^2-2{\rm Re}(x) s+|x|^2)^{-1}, \quad \left(\hbox{resp.} \, \,S_R^{-1}(s,x):=(s^2-2{\rm Re}(x)s+|x|^2)^{-1}(s-\bar x)\right).
		$$
	\end{definition}

	\begin{theorem}[Cauchy-formulas for slice hyperholomorphic functions]
		Let $U \subset \mathbb{R}^{n+1}$ be a bounded slice Cauchy domain, let $I \in \mathbb{S}$, and set $ds_I = ds(-I)$. If $f$ is a left (resp.\ right) slice hyperholomorphic function on a domain containing $\overline{U}$ and $x \notin [s]$, then
		\begin{equation}
			\label{integrals}
			f(x)= \frac{1}{2 \pi} \int_{\partial(U \cap \mathbb{C}_I)} S^{-1}_L(s,x)ds_I f(s), \qquad \bigg( \hbox{resp.} \, \, f(x)= \frac{1}{2 \pi} \int_{\partial(U \cap \mathbb{C}_I)}  f(s) ds_IS^{-1}_R(s,x)\bigg), \quad \forall x \in U,
		\end{equation}
		where $S^{-1}_L(s,x)$ (resp.\ $S^{-1}_R(s,x)$) denotes the left (resp.\ right) slice hyperholomorphic Cauchy kernel.
		Moreover, the integrals in \eqref{integrals} are independent of both the domain $U$ and the choice of the imaginary unit $I \in \mathbb{S}$.
	\end{theorem}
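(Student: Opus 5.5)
The plan is the classical slice-by-slice reduction: restrict to one complex plane, apply the ordinary complex Cauchy formula there, and extend to all of $\mathbb{R}^{n+1}$ with the representation formula for slice functions. Independence of the domain and of $I$ then follows from the same argument.

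\emph{Step 1 (restriction to a slice).} Fix $I\in\mathbb{S}$ and take $x=u+Jv\in U$. First suppose $J=I$, so $x\in U\cap\mathbb{C}_I$. For $s\in\mathbb{C}_I$, the variables $x$ and $s$ commute. Using form II of $S_L^{-1}$ from Definition~\ref{Ckernel},
\[
S_L^{-1}(s,x)=(s-\bar x)\bigl((s-x)(s-\bar x)\bigr)^{-1}=(s-x)^{-1},
\]
so the kernel restricted to $\mathbb{C}_I$ is the classical Cauchy kernel. With $ds_I=ds/I$, the left formula becomes
\[
\frac{1}{2\pi I}\int_{\partial(U\cap\mathbb{C}_I)}(s-x)^{-1}\,ds\,f(s).
\]
Split $f|_{\mathbb{C}_I}=\sum_A F_A e_A$ using a basis adapted to $\mathbb{C}_I$: complete $I=I_1$ to an orthonormal frame $I_1,\dots,I_n$ and let each $F_A$ be $\mathbb{C}_I$-valued. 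The Cauchy--Riemann equations in Definition~\ref{sh} say exactly that each $F_A$ is holomorphic on $U\cap\mathbb{C}_I$, since left multiplication by $I$ is compatible with the splitting. The boundary of the bounded Cauchy domain $U\cap\mathbb{C}_I$ is a finite union of piecewise $C^1$ Jordan curves, so the classical Cauchy formula applies to each $F_A$. This gives $f(x)$ for $x\in U\cap\mathbb{C}_I$.

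\emph{Step 2 (extension off the slice).} For general $x=u+Jv$, the left side $f(x)$ is given by the representation formula for slice functions:
\[
f(u+Jv)=\tfrac12\bigl[(1-JI)f(u+Iv)+(1+JI)f(u-Iv)\bigr].
\]
This follows from the form $\alpha+J\beta$ together with the even-odd conditions. The kernel $x\mapsto S_L^{-1}(s,x)$ is itself left slice hyperholomorphic in $x$ away from $[s]$: form I writes it as $A+\underline{\omega}B$ with real-coefficient quadratic denominators. Hence it satisfies the same representation formula. Left-multiplying the formulas of Step 1 at $u\pm Iv$ by $\tfrac12(1\mp JI)$ and summing gives the claim for $x$. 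The right version is identical, using form II of $S_R^{-1}$ and right multiplication.

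\emph{Step 3 (independence).} Independence of $U$ follows from Cauchy's theorem on each slice: for $s\mapsto f(s)$ and the kernel, the integrand is holomorphic in $s$ on the region between two admissible domains, because $x\notin[s]$. Independence of $I$ follows because the formula reproduces $f(x)$, which does not depend on $I$.

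\emph{Main obstacle.} The delicate point is Step 2. One must check that the kernel truly satisfies the representation formula in $x$ with $s$ placed on the correct side. One must also check that form I and form II agree. This requires the identity
\[
(s-\bar x)(s^2-2\mathrm{Re}(x)s+|x|^2)^{-1}=-(x^2-2\mathrm{Re}(s)x+|s|^2)^{-1}(x-\bar s),
\]
and I would verify it by multiplying out and using that $x^2-2\mathrm{Re}(x)x+|x|^2=0$.
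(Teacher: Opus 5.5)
Your proof is correct. The paper does not prove this theorem; it quotes it from the cited monographs, and the standard proof there follows essentially your route: the splitting lemma plus the classical Cauchy formula on one slice $\mathbb{C}_I$, then the representation formula, using that $x\mapsto S_L^{-1}(s,x)$ is a left slice function, to reach points off the slice. Independence of $I$ and $U$ then follows because the integral always equals $f(x)$.

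Two simplifications are worth noting. First, the obstacle you flag can be avoided. For $x=u+\underline{\omega}v$, form II alone already has the shape $(s-u)q^{-1}+\underline{\omega}\,v\,q^{-1}$ with $q:=s^2-2us+u^2+v^2$ even in $v$. So form II serves for both Step 1 and Step 2, and you never need to reconcile it with form I. Second, the Cauchy-theorem argument in your Step 3 is unnecessary, since the value is always $f(x)$. As phrased, it would also need the two-sided splitting, because $s\mapsto S_L^{-1}(s,x)$ is not $\mathbb{C}_I$-valued when $x\notin\mathbb{C}_I$.
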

	
	For further details on the theory of slice hyperholomorphic functions, we refer the reader to \cite{CGK, ColomboSabadiniStruppa2011, CSS2}. In hypercomplex analysis, another prominent class of functions is the following.
	
	\begin{definition}
		\label{am}
		Let $U \subseteq \mathbb{R}^{n+1}$ be an axially symmetric domain.
		A function $f : U \to \mathbb{R}^{n+1}$ of class $\mathcal{C}^1$ is called
		left (resp.\ right) axially monogenic if it belongs to the kernel of the Dirac operator, that is,
		$$
		Df(x)=\bigg(\frac{\partial}{\partial x_0}
		+ \sum_{i=1}^{n} e_i \frac{\partial}{\partial x_i}\bigg) f(x)=0, \quad
		\bigg( \hbox{resp.} \, \,
		f(x)D=\frac{\partial}{\partial x_0} f(x)
		+ \sum_{i=1}^{n} \frac{\partial}{\partial x_i} f(x)\, e_i=0 \bigg),
		$$
		and if it is of axial form; see Definition~\ref{axial}. We denote the class of left (resp right) axially monogenic functions by $ \mathcal{AM}_L(U)$ (resp. $ \mathcal{AM}_R(U)$).
	\end{definition}
	
	\begin{remark}
		In the rest of the paper, whenever it is not necessary to distinguish between the two cases, we will restrict ourselves to the left case.
	\end{remark}
	
	For further information on this class of functions, we refer the reader to \cite{red, green, GHS}. The following  fundamental result in hypercomplex analysis establishes a connection between these function spaces  $\mathcal{SH}_L(U)$  and $\mathcal{AM}_L(U)$ (cf. \cite{ColSabStrupSce, Sce}).
	
	\begin{theorem}[Fueter-Sce mapping theorem]\label{FSTheo}
		Let $n\in\mathbb{N}$ be odd and set $h_n:= \frac{n-1}{2}$. Suppose that $f_0(z)=\alpha(u,v)+i \beta(u,v)$ is a holomorphic function defined in a domain $\Pi$ in the upper half complex plane. Let
		$ U_{\Pi}:= \{x=x_0+\underline{x} \, : \, (x_0, |\underline{x}|) \in \Pi\},$
		be the open set induced by $\Pi$ in $\mathbb{R}^{n+1}$. The slice operator, denoted by $T_{FS1}$, and defined by
		$ f(x)=T_{FS1}(f_0)=\alpha(x_0, |\underline{x}|)+ \frac{\underline{x}}{|\underline{x}|} \beta(x_0, |\underline{x}|), \quad x \in U_\Pi$
		maps the set of holomorphic functions in the set of left slice hyperholomorphic functions. Moreover, the operator
		$T_{FS2}:= \Delta_{n+1}^{h_n}$
		maps $f(x)=T_{FS1}(f_0)$ into the set of left axially monogenic functions.
	\end{theorem}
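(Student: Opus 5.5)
The theorem has two parts. I would prove them in that order, deriving the second part from the Cauchy formula \eqref{integrals} together with an explicit computation on the kernel.

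\emph{Step 1 (the slice map).} Extend $f_0$ to the lower half plane by $\alpha(u,-v)=\alpha(u,v)$ and $\beta(u,-v)=-\beta(u,v)$ (Schwarz reflection type), so that $\alpha,\beta$ satisfy the even--odd conditions \eqref{eo}. Writing $x=u+Iv$ with $I=\underline{\omega}=\underline{x}/|\underline{x}|$ and $v=|\underline{x}|$, the function $T_{FS1}(f_0)$ has exactly the form \eqref{slice}. The Cauchy--Riemann equations for $(\alpha,\beta)$ are those of $f_0$, so $f\in\mathcal{SH}_L(U_\Pi)$. The even--odd conditions make $f$ independent of the sign ambiguity $(v,I)\leftrightarrow(-v,-I)$. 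This step is routine.

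\emph{Step 2 (reduction to the kernel).} Fix $x$ and choose a bounded slice Cauchy domain around $[x]$ on which $f$ is slice hyperholomorphic. By \eqref{integrals}, $f(x)=\frac1{2\pi}\int S_L^{-1}(s,x)\,ds_I\,f(s)$. Since $x\notin[s]$ on the contour, the kernel is smooth in $x$ there. We may therefore differentiate under the integral sign, and it suffices to show
\[
D\,\Delta_{n+1}^{h_n}S_L^{-1}(s,x)=0
\]
in $x$, for every fixed $s$. I would work with the form II kernel, writing $S_L^{-1}(s,x)=(s-\bar x)\mathcal{Q}^{-1}$ with $\mathcal{Q}:=s^2-2x_0s+|x|^2$. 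Its advantage is that $\mathcal{Q}$ lies in the commutative algebra $\mathbb{C}_I$ generated by $s$ and real functions of $x$. The basic derivatives are $\partial_{x_0}\mathcal{Q}=-2(s-x_0)$ and $\partial_{x_i}\mathcal{Q}=2x_i$.

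\emph{Step 3 (iterating the Laplacian).} I would prove by induction on $k$ that
\[
\Delta_{n+1}\big[(s-\bar x)\mathcal{Q}^{-k}\big]=c_k\,(s-\bar x)\mathcal{Q}^{-k-1},
\]
with an explicit constant of the form $c_k=4k(2k+1-n)$ (to be checked). The computation uses three facts:
\begin{itemize}
\item $\sum_\mu(\partial_\mu\mathcal{Q})^2=4\mathcal{Q}$, which follows from $(s-x_0)^2+|\underline{x}|^2=\mathcal{Q}$;
\item $\Delta\mathcal{Q}=2(n+1)$;
\item $\bar x$ is linear in $x$, so $\Delta\bar x=0$.
\end{itemize}
Throughout, the cross terms $2\sum_\mu\partial_\mu(s-\bar x)\,\partial_\mu\mathcal{Q}^{-k}$ must be reorganised into a multiple of $(s-\bar x)\mathcal{Q}^{-k-1}$. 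Iterating from $k=1$ gives
\[
\Delta^{h_n}S_L^{-1}(s,x)=\gamma_n\,(s-\bar x)\,\mathcal{Q}^{-\frac{n+1}{2}}.
\]
The constant $\gamma_n$ is irrelevant for monogenicity.

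\emph{Step 4 (the main obstacle).} It remains to show that $D\big[(s-\bar x)\mathcal{Q}^{-\frac{n+1}{2}}\big]=0$. The difficulty is noncommutativity: $D$ multiplies by $e_i$ on the left, while $s\in\mathbb{C}_I$ need not commute with $e_i$. Two terms arise:
\begin{itemize}
\item using $D\bar x=n+1$, the contribution of $D$ hitting $s-\bar x$ is $-(n+1)\mathcal{Q}^{-m}$;
\item the term in which $D$ hits $\mathcal{Q}^{-m}$ is $-m\sum_\mu e_\mu(s-\bar x)\,\partial_\mu\mathcal{Q}\;\mathcal{Q}^{-m-1}$.
\end{itemize}
To handle this, I would first try to avoid the problem altogether by passing to form I, $S_L^{-1}(s,x)=-(x^2-2\mathrm{Re}(s)x+|s|^2)^{-1}(x-\bar s)$. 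There all coefficients of $x$ are real except the right factor $\bar s$, so everything commutes with $D$ acting from the left. In that representation the key identities become
\[
\sum_\mu e_\mu\,\partial_\mu\big(x^2-2\mathrm{Re}(s)x+|s|^2\big)=\text{(explicit multiple of } (x-\bar s)\text{ or of } \bar x-s\text{)},\qquad (x-\bar s)(\bar x-s)\sim \mathcal{Q}_{c,s}(x).
\]
With these, the two contributions cancel exactly when $m=\frac{n+1}{2}$. This fixes the exponent and explains why $h_n=\frac{n-1}{2}$ is forced. If form I proves awkward for the Laplacian iteration in Step 3, I would do Step 3 in form II and translate the final expression to form I (the two forms agree as functions) before applying $D$.
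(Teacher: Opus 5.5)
The paper does not prove this theorem; it only quotes it from the literature. Your route is the standard kernel proof of the Fueter--Sce theorem: the Cauchy formula, differentiation under the integral, the identity $\Delta_{n+1}^{h_n}S_L^{-1}(s,x)=\gamma_n(s-\bar x)\mathcal{Q}^{-\frac{n+1}{2}}$, and monogenicity of that kernel. Steps 1--3 are sound, with one correction. The constant in Step 3 is $c_k=2k(2k-1-n)$, not $4k(2k+1-n)$. The reason is that $\Delta_{n+1}\mathcal{Q}^{-k}=2k(2k+1-n)\mathcal{Q}^{-k-1}$, while the cross term $2\sum_\mu\partial_\mu(s-\bar x)\,\partial_\mu\mathcal{Q}^{-k}$ contributes $-4k(s-\bar x)\mathcal{Q}^{-k-1}$. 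With your guess, $c_{h_n}=0$, and you would wrongly conclude $\Delta_{n+1}^{h_n}S_L^{-1}\equiv 0$. The correct $c_k$ are nonzero for $1\le k\le h_n$ and give $\gamma_n=(-1)^{h_n}2^{n-1}(h_n!)^2$. For $n=5$ this reproduces the paper's constants $-8$ and $64$.

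\textbf{The gap: Step 4.} This is the heart of the theorem, and you never carry it out. You list the two contributions but do not show that they cancel, and the fallback you propose cannot work.
\begin{itemize}
\item The equality of forms I and II is an identity for the Cauchy kernel only. The natural form-I analogue $-(x^2-2\mathrm{Re}(s)x+|s|^2)^{-m}(x-\bar s)$ is left slice hyperholomorphic in $x$. By contrast, $(s-\bar x)\mathcal{Q}^{-m}$ is not, for $m\ge 2$: on the slice containing $s$ it equals $(s-x)^{-m}(s-\bar x)^{1-m}$. So there is nothing to translate into.
\item Doing the Laplacian iteration directly in form I is blocked as well, because $\Delta_{n+1}$ destroys slice hyperholomorphicity.
\item The identity you want in form I is false: $D(x^2-2\mathrm{Re}(s)x+|s|^2)=2(1-n)(x_0-s_0)$ is a real scalar, not a multiple of $x-\bar s$.
\end{itemize}

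\textbf{The fix.} Stay in form II; the noncommutativity you fear is harmless. Write $a=s-x_0\in\mathbb{C}_I$, so that $s-\bar x=a+\underline{x}$. With $e_0:=1$,
\[
\sum_{\mu=0}^{n} e_\mu(s-\bar x)\,\partial_\mu\mathcal{Q}=-2(a+\underline{x})a+2\underline{x}(a+\underline{x})=-2a^2+2\underline{x}^2=-2\mathcal{Q}.
\]
The mixed terms $\underline{x}a$ occur in the same order with opposite signs, so they cancel. Hence
\[
D\big[(s-\bar x)\mathcal{Q}^{-m}\big]=(2m-n-1)\,\mathcal{Q}^{-m},
\]
which vanishes exactly for $m=\frac{n+1}{2}$. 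The same formula also reproduces the paper's $DS_L^{-1}=-4\mathcal{Q}^{-1}$ and $D\Delta_6S_L^{-1}=16\mathcal{Q}^{-2}$ for $n=5$. With this computation inserted in Step 4, your proof is complete.
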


	The Dirac fine structures on the $S$-spectrum  are based on the factorization of the operator $T_{FS2}:=\Delta_{n+1}^{h_n}$:
	\begin{equation}
		\label{sFS}
		\begin{tikzcd}[column sep=3em, ampersand replacement=\&]
			\mathcal{SH}_L(U)  \arrow[r, "\Delta_{n+1}^{h_n}"] \& \mathcal{AM}_L(U).
		\end{tikzcd}
	\end{equation}

	\begin{definition}[Dirac fine structures on the $S$-spectrum]
		\label{fine}
		Let $n$ be an odd integer and denote $h_n := \frac{n-1}{2}$.
		The Dirac fine structures on the $S$-spectrum consist of:
		\begin{itemize}
			\item All axial functions belonging to the kernels of the operators
			$$
			D^\beta \Delta_{n+1}^m, \quad \hbox{and} \quad  \overline{D}^\beta \Delta_{n+1}^m \qquad \text{with }\beta + m \leq h_n.
			$$
			
			\item Their associated functional calculi, defined through integral representations based on the Cauchy formula for slice hyperholomorphic functions.
		\end{itemize}
		In particular, the fine structures arising from the operators $D^\beta \Delta_{n+1}^m$  are called $D$-fine structures. Similarly, the factorizations obtained through the operators $\overline{D}^\beta \Delta_{n+1}^m$ are referred to as $\overline{D}$-fine structures.
	\end{definition}
	
	The investigation of the Dirac fine structures was initiated in \cite{CDPS} and was subsequently continued in \cite{CDP25, CDP2026, Fivedim, CDS} for bounded operators, and in \cite{CDP232, CPS1, DPS} for unbounded operators, see \cite{AD} for a survey.
	We now focus on the case $n=5$ (i.e., $h_n=2$). Specifically, we consider two particular $D$-fine structures.  The first possibility is to set $m=1$ and $\beta=1$. In this case, the scheme \eqref{sFS} can be factorized as
	
	\begin{equation}
		\label{12}
		\begin{tikzcd}[column sep=3em, ampersand replacement=\&]
			\mathcal{SH}_L(U) \arrow[r, "D\Delta_6"] \& \mathcal{AH}^L(U) \arrow[r, "\Delta_6"] \& \mathcal{AM}_L(U)
		\end{tikzcd}
	\end{equation}
	
	where $\mathcal{AH}^L(U)$ denotes the set of axially harmonic functions, defined by
	
	\begin{equation}
		\label{A1}
		\mathcal{AH}^L(U) = D \Delta_6 \big( \mathcal{SH}_L(U) \big)
		= \big\{ h \in C^\infty(U) \; | \; h = D \Delta_6 f, \ f \in \mathcal{SH}_L(U) \big\}.
	\end{equation}
	
	A function $h \in \mathcal{AH}^L(U)$ is harmonic by virtue of the Fueter–Sce mapping theorem, since $\Delta_6 h = \Delta_6 (D \Delta_6 f) = D \Delta_6^2 f = 0.$
	\\The second possible $D$-fine structure is obtained by taking $m=0$ and $\beta=1$. In this case, the scheme \eqref{sFS} factorizes as
	\begin{equation}
		\label{13}
		\begin{tikzcd}[column sep=3em, ampersand replacement=\&]
			\mathcal{SH}_L(U) \arrow[r, "D"] \& \mathcal{APH}_{2}^L(U) \arrow[r, "\Delta_6"] \& \mathcal{AM}_L(U)
		\end{tikzcd}
	\end{equation}
	where $\mathcal{APH}_{2}^L(U)$ denotes the set of axially polyharmonic functions of order 2, defined by
	
	\begin{equation}
		\label{A2}
		\mathcal{APH}_{2}^L(U) = D \big( \mathcal{SH}_L(U) \big)
		= \big\{ g \in C^\infty(U) \; | \; g = D f, \ f \in \mathcal{SH}_L(U) \big\}.
	\end{equation}
	
	A function $g \in \mathcal{APH}_{2}(U)$ is polyharmonic of order 2; indeed, by the Fueter mapping theorem, we have  $\Delta_6^2 g = \Delta_6^2 (D f) = D \Delta_6^2 f = 0.$
	\newline
	\newline
	All functional calculi associated with the fine structures are based on the notion of the $S$-spectrum. We first introduce some notations.
	Let $\mathcal{B}(V_n)$ be the Banach space of all bounded right linear Clifford operators
	$T=\sum_A e_A T_A$ where $A = \{\ell_1 \dots \ell_r\} \in \mathcal{P}\{1, 2, \dots, n\}$, $\ell_1 < \dots < \ell_r$ is a multi-index, $T_\emptyset = T_0$, acting on a two sided Clifford Banach module $V_n = V \otimes \mathbb{R}_n$, where $V$ is a real Banach space.
	By $\mathcal{BC}(V_n)$ we denotes the subset of $\mathcal{B}(V_n)$ consisting of Clifford operators with commuting components, i.e., operators of the type $\sum_A e_A T_A$ where $T_A$ commute pairwise and subclass $\mathcal{BC}^{0,1}(V_n)$ that consists of paravector operators with commuting components, i.e., $T = T_0 + e_1 T_1 + \dots + e_n T_n$.

	\medskip
	To introduce the $S$-functional calculus, we first recall the concept of the spectrum for Clifford operators. If $T \in \mathcal{BC}^{0,1}(V_n)$, the F-spectrum of $T$ is defined as
	\[
	\sigma_F(T) = \{s \in \mathbb{R}^{n+1} : s^2 \mathcal{I} - s(T + \overline{T}) + T\overline{T} \text{ is not invertible in } \mathcal{B}(V_n)\},
	\]
	where we have set $\overline{T} := T_0 - e_1 T_1 - \dots - e_n T_n$, and the $F$-resolvent set $\rho_F(T) := \mathbb{R}^{n+1} \setminus \sigma_F(T).$
	\begin{remark}
		The F-spectrum is the commutative version of the $S$-spectrum (cf. \cite{ColSab}), i.e., we have
		\[
		\sigma_F(T) = \sigma_S(T), \ \ \ \text{ for }\ \ \ T \in \mathcal{BC}^{0,1}(V_n),
		\]
		where
		the S-spectrum of the operator $T$, which is suitable for $T\in\mathcal{B}(V_n)$ is defined as
		\[
		\sigma_S(T) = \{s \in \mathbb{R}^{n+1} : T^2 - 2\text{Re}(s)T + |s|^2 I \text{ is not invertible in } \mathcal{B}(V_n)\}.
		\]
		Analogously $\rho_S(T) = \mathbb{R}^{n+1} \setminus \sigma_S(T)$ is the $S$-resolvent set.
	\end{remark}
	For $T \in \mathcal{BC}^{0,1}(V_n)$, the commutative version of left (resp, right) S-resolvent operators are defined as
	\begin{equation} \label{eq:1.7}
		S_L^{-1}(s,T) := (s\mathcal{I} - \overline{T})	Q_{c,s}^{-1}(T),  \qquad \left(\hbox{resp.} \, \, S_R^{-1}(s,T) := 	Q_{c,s}^{-1}(T)(s \mathcal{I} - \overline{T}) \,\right), \quad s \in \rho_S(T).
	\end{equation}
	where
	\begin{equation} \label{eq:1.9}
		Q_{c,s}^{-1}(T) := (s^2 \mathcal{I} - s(T + \overline{T}) + T\overline{T})^{-1}, \quad s \in \rho_S(T),
	\end{equation}
	is the commutative pseudo S-resolvent operator. The $S$-resolvent operators satisfied the following properties (cf. \cite{ColomboSabadiniStruppa2011}):
	\begin{equation}
		\label{sl}
		S^{-1}_L(s,T)s - T S_{L}^{-1}(s,T) = \mathcal{I},
	\end{equation}
	\begin{equation}
		\label{sr}
		s S^{-1}_R(s,T) - S_{R}^{-1}(s,T) T = \mathcal{I},
	\end{equation}
	for $T \in \mathcal{BC}^{0,1}(V_n)$ and $s \in \rho_S(T)$.
	\begin{remark}
		In the literature, the notation for the commutative version of the left $S$-resolvent is usually $S_{c,L}^{-1}(s,T)$ (resp. $S_{c,R}^{-1}(s,T)$). However, for the sake of simplicity, and since no confusion arises, in this paper we use the notation $S_L^{-1}(s,T)$ (resp. $S_R^{-1}(s,T)$).
	\end{remark}
	\begin{definition}
		Let $T \in \mathcal{BC}^{0,1}(V_n)$, and let $U \subset \mathbb{R}^{n+1}$ be a bounded slice Cauchy domain. We denote by $\mathcal{SH}^L_{\sigma_S(T)}(U)$, $\mathcal{SH}^R_{\sigma_S(T)}(U)$, and $\mathcal{N}_{\sigma_S(T)}(U)$ the sets of all left, right, and intrinsic slice hyperholomorphic functions $f$ such that $\sigma_S(T) \subset U \subset \hbox{dom}(f)$, where $\hbox{dom}(f)$ is the domain of the function $f$.
	\end{definition}

	With these tools, we are now ready to define the $S$-functional calculus.

	\begin{definition}[Commutative $S$-functional calculus]
		\label{Sfun}
		Let $T \in T \in \mathcal{BC}^{0,1}(V_n)$, and let $U \subset \mathbb{R}^{n+1}$ be a bounded slice Cauchy domain. Let $I \in \mathbb{S}$ and define $ds_I = ds(-I)$. For any $f \in \mathcal{SH}^L_{\sigma_S(T)}(U)$ (or $f \in \mathcal{SH}^R_{\sigma_S(T)}(U)$, respectively), we set
		\begin{equation}
			f(T)=\frac{1}{2 \pi} \int_{\partial(U \cap \mathbb{C}_I)} S^{-1}_{L}(s,T)ds_If(s), \quad \left( \hbox{resp.} \, \, f(T)=\frac{1}{2 \pi} \int_{\partial(U \cap \mathbb{C}_I)}f(s) ds_I S^{-1}_{R}(s,T)\right).
		\end{equation}
	\end{definition}
	The $S$-functional calculus, also in its noncommutative formulation,
	satisfies a product rule, which can be proved using the resolvent equations for the $S$-resolvent operators, see \cite{ACDS, ACGS}.

	\begin{theorem}[Generalized $S$-resolvent equation]
		\label{Bres}
		Let $T \in \mathcal{BC}^{0,1}(V_n)$  and $B \in \mathcal{B}(V_n)$ be such that $B$ commutes with $T$, then, For $p$, $s\in\rho_S(T)$ with $s\not\in[p]$ , we have
		\begin{equation}
			\label{Breseq}
			S^{-1}_R(s,T)BS_{L}^{-1}(p,T)=[(S^{-1}_R(s,T)B-BS_L^{-1}(p,T))p] -\bar{s}(S^{-1}_R(s,T)B-BS_L^{-1}(p,T))] Q_s^{-1}(p),
		\end{equation}
		where $Q_s(p):=p^2-2s_0p+|s|^2$.
	\end{theorem}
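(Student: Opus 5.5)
The plan is to reduce \eqref{Breseq} to the two basic identities \eqref{sl} and \eqref{sr}. We exploit that $B$ commutes with $T$, hence with $\overline{T}$, $\mathcal{Q}_{c,s}^{-1}(T)$, $S_L^{-1}(p,T)$ and $S_R^{-1}(s,T)$. Throughout we abbreviate $A:=S_R^{-1}(s,T)$ and $C:=S_L^{-1}(p,T)$; the claim is then $ABC\,Q_s(p)=(AB-BC)p-\bar s(AB-BC)$.

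The first step is to rewrite the two basic identities in the forms we need. From \eqref{sl} with $p$ in place of $s$, we get $TC=Cp-\mathcal{I}$. From \eqref{sr}, we get $AT=sA-\mathcal{I}$. Multiplying the first on the left by $AB$ and the second on the right by $BC$, and using $BT=TB$, we obtain
\[
ABTC=ABCp-AB,\qquad ABTC=sABC-BC.
\]
Equating the two right-hand sides gives the key ``Sylvester-type'' relation
\[
sABC-ABCp=BC-AB. \tag{$\ast$}
\]
So $X:=ABC$ solves $sX-Xp=BC-AB$.

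The second step is to solve $(\ast)$ for $X$ in the standard slice-hyperholomorphic way, using only that $s,p$ are paravectors. Multiply $(\ast)$ on the left by $\bar s$ and use $\bar s s=|s|^2$:
\[
|s|^2X-\bar sXp=\bar s(BC-AB).
\]
Multiply $(\ast)$ on the right by $p$:
\[
sXp-Xp^2=(BC-AB)p.
\]
Subtracting the first from the second, and using $s+\bar s=2s_0$ real (so that $sXp+\bar sXp=2s_0Xp$), we get
\[
-X(p^2-2s_0p+|s|^2)=(BC-AB)p-\bar s(BC-AB).
\]
The left-hand side is $-XQ_s(p)$. Hence
\[
XQ_s(p)=(AB-BC)p-\bar s(AB-BC).
\]
Since $s\notin[p]$, $Q_s(p)$ is an invertible paravector-polynomial in $p$ lying in $\mathbb{C}_{I_p}$. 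Multiplying on the right by $Q_s^{-1}(p)$ then yields \eqref{Breseq}.

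The main point requiring care is the noncommutativity bookkeeping. Scalars $s,p$ multiply on opposite sides, and $B$ only commutes with $T$, not with the Clifford units. This is why every multiplication by $s$ must be on the left and every multiplication by $p$ on the right, so that each identity stays valid. One also has to justify that $\bar sXp$ is a well-defined unambiguous product, which it is by associativity. Beyond that, we must check that $T$ commuting with $B$ is all that is used in $ABTC$. For this we use that $T$ commutes with $\mathcal{Q}_{c,s}(T)$, hence with its inverse, which is what makes $AT$ and $TC$ expressible via \eqref{sr}, \eqref{sl}.
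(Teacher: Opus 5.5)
Your argument is correct. It is also essentially the standard proof: the paper does not reprove this theorem but quotes it from \cite{ACDS, ACGS}, where one likewise derives $sX-Xp=BC-AB$ from \eqref{sl} and \eqref{sr}, then combines $\bar s\cdot(\ast)$ with $(\ast)\cdot p$ to isolate $X\,Q_s(p)$.

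One correction: your opening claim that $B$ must then also commute with $\overline{T}$, $\mathcal{Q}_{c,s}^{-1}(T)$, $S_L^{-1}(p,T)$ and $S_R^{-1}(s,T)$ is false in general. Commuting with $T_0+\underline{T}$ does not force commuting with $T_0$ and $\underline{T}$ separately. The claim is harmless, though, since the only commutation your proof uses is $ATBC=ABTC$, which needs just $BT=TB$.
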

	
	\begin{remark}
		\label{listS}
		We summarize below the principal properties of the generalized $S$-resolvent equation.
		\begin{itemize}
			\item It preserves right slice hyperholomorphicity in the variable $s$ and left slice hyperholomorphicity in the variable $p$.
			\item The product $S^{-1}_R(s,T) B S^{-1}_L(p,T)$ is transformed into the difference $S^{-1}_R(s,T) B - B S^{-1}_L(p,T)$.
			\item The difference $S^{-1}_R(s,T) B - B S^{-1}_L(p,T)$ is entangled with the slice hyperholomorphic Cauchy kernel
			written in the non-commutative version, i.e., in Form I, see Definition \ref{Ckernel}.
		\end{itemize}
	\end{remark}

	\section{The resolvent equation and the product rule for the bi-harmonic functional calculus}
	
	In this section, we establish a resolvent equation, a product rule and a counter part of the Riesz projectors for the biharmonic functional calculus arising from the factorization in \eqref{13}. This functional calculus was established in \cite{Fivedim}, where the Dirac operator $D$ was applied to the left and right slice hyperholomorphic Cauchy kernels (see \eqref{Ckernel}), obtaining
	\begin{equation}\label{eq}
		D S_L^{-1}(s,x) = D S_R^{-1}(s,x) = -4 \mathcal{Q}_{c,s}^{-1}(x).
	\end{equation}
	By combining \eqref{eq} with the slice hyperholomorphic Cauchy formulas, one obtains integral representation formulas for functions in $\mathcal{APH}_2^L(U)$ (see \eqref{A2}), which lead to the notion of biharmonic functional calculi.
	
	\begin{definition}[Bi-Harmonic functional calculus based on the $S$-spectrum]
		\label{bih1}
		Let $T \in \mathcal{BC}^{0,1}(V_5)$ be such that its components $T_i$, $i=1,2,3,4,5$, have real spectra and $T_0=0$. Let $U$ be a bounded slice Cauchy domain, and set $ds_I = ds(-I)$ for $I \in \mathbb{S}$.
		For $f \in \mathcal{SH}_{\sigma_S(T)}^L(U)$ (resp.\ $f \in \mathcal{SH}^R_{\sigma_S(T)}(U)$), let $f_D(x) = Df(x)$ (resp.\ $f_D(x) = f(x)D$). We define the biharmonic functional calculus based on the $S$-spectrum for the operator $T$ as follows:
		\begin{equation}
			\label{bih}
			f_D(T)= \frac{1}{2 \pi} \int_{\partial(U \cap \mathbb{C}_I)} S_{D}^{-1}(s,T)ds_I f(s), \qquad \left(\hbox{resp.} \, \, f_D(T)= \frac{1}{2 \pi} \int_{\partial(U \cap \mathbb{C}_I)} f(s)ds_IS_{D}^{-1}(s,T) \right)
		\end{equation}	
		where the resolvent operator $S_{D}^{-1}(s,T)$ is defined as
		\begin{equation}
			\label{H1}
			S_{D,L}^{-1}(s,T)=S_{D,R}^{-1}(s,T) = -4\mathcal{Q}_{c,s}^{-1}(T).
		\end{equation}
	\end{definition}
	\begin{remark}\label{FCOMPON}
		We observe that in Definition \ref{bih1} it is possible to assume $T_0 \neq 0$ when one of the components $T_i$, for $i=1, \dots, n$, is the zero operator. The requirement that the components $T_i$ (for $i=0,1, \dots, 5$) have real spectra is necessary to ensure the compatibility of the $F$-functional calculus (which is part of the fine structure) with the monogenic functional calculus developed by McIntosh and collaborators, see Remark 3.31 in \cite{CDP25}.
	\end{remark}
	Another functional calculus arising from the fine structures (see Definition \ref{fine}), corresponding to $\beta = 0$ and $m = 1$, which we will need in the sequel, is the following; see \cite{CDP25, Fivedim}.
	\begin{definition}[$\Delta$-functional calculus based on the $S$-spectrum]
		\label{delta}
		Under the same assumptions on the operator $T$ and on the set $U$ as in Definition~\ref{bih1}, let $f \in \mathcal{SH}_{\sigma_S(T)}^L(U)$ (resp.\ $f \in \mathcal{SH}^R_{\sigma_S(T)}(U)$), and define
		$
		f_{\Delta_6}(x) := \Delta_6 f(x).
		$
		The $\Delta$-functional calculus based on the $S$-spectrum of the operator $T$ is defined by:
		\begin{equation}
			f_{\Delta}(T)= \frac{1}{2 \pi} \int_{\partial(U \cap \mathbb{C}_I)} S_{\Delta,L}^{-1}(s,T)ds_I f(s), \qquad \left(\hbox{resp.} \, \, f_{\Delta}(T)= \frac{1}{2 \pi} \int_{\partial(U \cap \mathbb{C}_I)} f(s)ds_IS_{\Delta, R}^{-1}(s,T) \right)
		\end{equation}	
		where the resolvent operators are given by
		\begin{align}\label{K-resol.op}
			S^{-1}_{\Delta,L}(s,T)=-8 S_L^{-1}(s,T)\mathcal{Q}^{-1}_{c,s}(T),\qquad \left(\hbox{resp.} \, \, S^{-1}_{\Delta, R}(s,T)=-8 \mathcal{Q}^{-1}_{c,s}(T)S_R^{-1}(s,T) \right).
		\end{align}
	\end{definition}
	
	The above resolvent operators satisfy the following property

	\begin{proposition}\label{Delta equation}
		Let $T \in \mathcal{BC}^{0,1}(V_5)$ and let $s \in \rho_S(T)$. Then we have
		\begin{equation}
			\label{rK1}
			S^{-1}_{\Delta,L}(s,T)s-TS^{-1}_{\Delta,L}(s,T)=-8 \mathcal{Q}_{c,s}^{-1}(T)
		\end{equation}
		and
		\begin{equation}
			\label{rK2}
			sS^{-1}_{\Delta,R}(s,T)-S^{-1}_{\Delta,R}(s,T)T=-8 \mathcal{Q}_{c,s}^{-1}(T).
		\end{equation}
	\end{proposition}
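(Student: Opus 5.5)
The plan is to reduce both identities to the basic $S$-resolvent equations \eqref{sl} and \eqref{sr}, using that $\mathcal{Q}_{c,s}^{-1}(T)$ commutes with everything that appears.

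We start with the preliminary observation that $\mathcal{Q}_{c,s}(T)=s^2\mathcal{I}-s(T+\overline{T})+T\overline{T}$ has operator coefficients $T+\overline{T}=2T_0$ and $T\overline{T}=\sum_{i=0}^5 T_i^2$. These are real (scalar) operators, i.e.\ they contain no imaginary units, and since the $T_i$ commute pairwise they commute with $T$. The scalar $s$ enters only through powers of $s$ multiplying these real operators. Hence $\mathcal{Q}_{c,s}(T)$ commutes with $T$ and with left and right multiplication by $s$. The inverse $\mathcal{Q}_{c,s}^{-1}(T)$ therefore commutes with $T$ and with $s$ as well, on either side. Concretely, $\mathcal{Q}_{c,s}^{-1}(T)s = s\mathcal{Q}_{c,s}^{-1}(T)$, because $s$ commutes with each term of $\mathcal{Q}_{c,s}(T)$, and inverting preserves commutation.

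For \eqref{rK1} we insert the definition \eqref{K-resol.op}:
$$S^{-1}_{\Delta,L}(s,T)s-TS^{-1}_{\Delta,L}(s,T) = -8\big(S_L^{-1}(s,T)\mathcal{Q}^{-1}_{c,s}(T)s - T S_L^{-1}(s,T)\mathcal{Q}^{-1}_{c,s}(T)\big).$$
We move $s$ to the left of $\mathcal{Q}^{-1}_{c,s}(T)$ and factor $\mathcal{Q}^{-1}_{c,s}(T)$ out on the right. This gives
$$-8\big(S_L^{-1}(s,T)s - TS_L^{-1}(s,T)\big)\mathcal{Q}^{-1}_{c,s}(T),$$
which equals $-8\mathcal{Q}^{-1}_{c,s}(T)$ by \eqref{sl}.

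For \eqref{rK2} the argument is symmetric. We write
$$sS^{-1}_{\Delta,R}(s,T) - S^{-1}_{\Delta,R}(s,T)T = -8\big(s\mathcal{Q}^{-1}_{c,s}(T)S_R^{-1}(s,T) - \mathcal{Q}^{-1}_{c,s}(T)S_R^{-1}(s,T)T\big).$$
We commute $s$ past $\mathcal{Q}^{-1}_{c,s}(T)$ and factor $\mathcal{Q}^{-1}_{c,s}(T)$ out on the left. Then \eqref{sr} gives $-8\mathcal{Q}^{-1}_{c,s}(T)$.

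The only delicate point is the commutation of $s$, a paravector containing imaginary units, with $\mathcal{Q}^{-1}_{c,s}(T)$. This commutation fails for general Clifford operators. Here it holds because $\mathcal{Q}_{c,s}(T)$ is a polynomial in $s$ with real-operator coefficients, which follows from the commuting-components assumption $T\in\mathcal{BC}^{0,1}(V_5)$. We will state this explicitly. Alternatively, the identity \eqref{sl} can be verified directly from \eqref{eq:1.7}, but citing it suffices.
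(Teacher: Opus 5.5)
Your argument is correct and is the paper's proof. You substitute \eqref{K-resol.op}, move $s$ across $\mathcal{Q}_{c,s}^{-1}(T)$, factor, and apply \eqref{sl} (resp.\ \eqref{sr}), and you make explicit the commutation $\mathcal{Q}_{c,s}^{-1}(T)s=s\,\mathcal{Q}_{c,s}^{-1}(T)$ that the paper uses silently.

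Delete one side claim, though: $\mathcal{Q}_{c,s}(T)=s^2\mathcal{I}-2sT_0+\sum_{i=0}^5T_i^2$ does \emph{not} in general commute with $T$ for $s\notin\mathbb{R}$, because the units $e_i$ in $T$ do not commute with $\underline{s}$. Your proof never uses that claim, so nothing breaks.
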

	\begin{proof}
		We prove only \eqref{rK1}, since \eqref{rK2} can be established by analogous arguments. By the definition of $S^{-1}_{\Delta,L}(s,T)$ and \eqref{sl}, we obtain
		$$ S^{-1}_{\Delta,L}(s,T)s-TS^{-1}_{\Delta,L}(s,T)=-8[S^{-1}_L(s,T)s-T S^{-1}_L(s,T)] \mathcal{Q}_{c,s}^{-1}(T)=-8 \mathcal{Q}_{c,s}^{-1}(T).\hfill\qedhere$$
	\end{proof}

	To derive the resolvent equation for the biharmonic functional calculus, which satisfies properties analogous to those of the generalized $S$-resolvent equation (see Remark~\ref{listS}), we require some technical lemmas.

	\begin{lemma}\label{Lemmastep0}
		Let $T \in \mathcal{BC}^{0,1}(V_5)$. For $p$, $s\in\rho_S(T)$ with $s\not\in[p]$ we have
		$$ Q_s^{-1}(p)\mathcal{Q}_{c,p}^{-m}(T)=\mathcal{Q}_{c,p}^{-m}(T)Q_s^{-1}(p), \qquad m \in \mathbb{N},$$
		where $Q_s(p)=p^2-2s_0p+|s|^2$.
	\end{lemma}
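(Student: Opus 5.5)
The plan is to reduce the statement to the case $m=1$ and then show that $Q_s(p)$ commutes with $\mathcal{Q}_{c,p}(T)$ itself. Here $Q_s(p)=p^2-2s_0p+|s|^2$ is a paravector-valued element of $\mathbb{R}_5$ (a polynomial in $p$ with real coefficients), acting by left multiplication on $V_5$. The operator $\mathcal{Q}_{c,p}(T)=p^2\mathcal{I}-p(T+\overline{T})+T\overline{T}$ is likewise a polynomial in $p$, but with operator coefficients.

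The key observation concerns these coefficients. Since $T=T_0+\sum_{j} e_jT_j$ has commuting components, we have $T+\overline{T}=2T_0$ and $T\overline{T}=\sum_{j=0}^5 T_j^2$. Both are \emph{scalar} operators: real linear operators on $V$, extended componentwise to $V_5=V\otimes\mathbb{R}_5$, with no Clifford units. Such operators commute with left multiplication by any Clifford number, in particular with powers of $p$. Hence
$$
\mathcal{Q}_{c,p}(T)=p^2\mathcal{I}-2p\,T_0+\sum_{j=0}^5T_j^2
$$
is a polynomial in the single Clifford number $p$ with coefficients that are scalar operators. Similarly, $Q_s(p)$ is a polynomial in $p$ with real coefficients. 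Powers of $p$ commute with each other, and real scalars and scalar operators commute with everything involved. Therefore $Q_s(p)\mathcal{Q}_{c,p}(T)=\mathcal{Q}_{c,p}(T)Q_s(p)$.

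Next I pass to inverses. Since $p\in\rho_S(T)=\rho_F(T)$, the operator $\mathcal{Q}_{c,p}(T)$ is invertible. Multiplying the identity $Q_s(p)\mathcal{Q}_{c,p}(T)=\mathcal{Q}_{c,p}(T)Q_s(p)$ on both sides by $\mathcal{Q}_{c,p}^{-1}(T)$ gives
$$
\mathcal{Q}_{c,p}^{-1}(T)Q_s(p)=Q_s(p)\mathcal{Q}_{c,p}^{-1}(T).
$$
Since $s\notin[p]$, $Q_s(p)\neq0$. Writing $p=p_0+J|\underline p|$ puts $Q_s(p)$ in the complex plane $\mathbb{C}_J$, where it is a nonzero element and hence invertible, with inverse again in $\mathbb{C}_J$. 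Multiplying the last identity by $Q_s^{-1}(p)$ on both sides yields $Q_s^{-1}(p)\mathcal{Q}_{c,p}^{-1}(T)=\mathcal{Q}_{c,p}^{-1}(T)Q_s^{-1}(p)$. Finally, induction on $m$ moves $Q_s^{-1}(p)$ past each factor of $\mathcal{Q}_{c,p}^{-m}(T)$ in turn, which proves the statement for all $m\in\mathbb{N}$.

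The main point needing care is the first step: justifying that $T+\overline{T}$ and $T\overline{T}$ carry no imaginary units. This uses the commuting-components hypothesis: the cross terms $e_ie_j(T_iT_j-T_jT_i)$ vanish, and $e_j^2=-1$ produces the sum of squares. It also uses that the $T_j$ act componentwise, so that they commute with left Clifford multiplication. Everything else is formal algebra.
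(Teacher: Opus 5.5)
Your proof is correct, but it takes a more hands-on route than the paper. The paper's proof is a one-line structural argument. It notes that $\mathcal{Q}_{c,p}^{-m}(T)$ is a $\mathcal{B}(V_5)$-valued intrinsic slice hyperholomorphic function of $p$, which it imports as an external result (Prop.~6.15 of the cited reference), and that $Q_s^{-1}(p)$ is intrinsic in the same variable. Two intrinsic functions of the same slice variable commute, since on each $\mathbb{C}_J$ both have the form $\alpha+J\beta$ with coefficients that commute with $J$ and with each other.

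You verify this directly instead:
\begin{itemize}
\item using commuting components, you compute $T+\overline{T}=2T_0$ and $T\overline{T}=\sum_{j=0}^5T_j^2$;
\item you observe that these are scalar operators, which commute with left Clifford multiplication;
\item you conclude that $Q_s(p)$ and $\mathcal{Q}_{c,p}(T)$ commute at the polynomial level;
\item you pass to inverses, using $p\in\rho_F(T)$ and the fact that $Q_s(p)$ is a nonzero element of $\mathbb{C}_J$ when $s\notin[p]$;
\item you extend to powers by induction.
\end{itemize}

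Your approach is self-contained. It does not rely on the imported intrinsic-function result, and it shows exactly where the commuting-components hypothesis and the condition $s\notin[p]$ enter; the paper leaves the latter implicit. The paper's approach is shorter and fits the slice-hyperholomorphic framework. Once intrinsicity is known, the same sentence gives commutativity of $Q_s^{-1}(p)$ with any intrinsic operator-valued function of $p$, without recomputation. Underneath, though, the intrinsic property of $\mathcal{Q}_{c,p}^{-m}(T)$ rests on the same scalar-coefficient observation you make explicit.
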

	\begin{proof}
		The claim follows from the fact that $\mathcal{Q}_{c,s}^{-m}(T)$ is a $\mathcal{B}(V_5)$-valued intrinsic slice hyperholomorphic function in $p$ (see \cite[Prop.~6.15]{CDP2026}), and that the polynomial $Q_s^{-1}(p)$ is intrinsic slice hyperholomorphic in the same variable.
	\end{proof}
	
	\begin{lemma}\label{Lemmastep1}
		Let $T\in \mathcal{BC}^{0,1}(V_5)$. For $p$, $s\in\rho_S(T)$ with $s\not\in[p]$ , the following equation holds
		\begin{align}\label{D-step 3}
			\begin{split}
				&S^{-1}_R(s,T)\mathcal{Q}_{c,p}^{-1}(T)=[(S^{-1}_R(s,T)\mathcal{Q}_{c,p}^{-1}(T)p-S^{-1}_R(s,T)T\mathcal{Q}_{c,p}^{-1}(T)-\mathcal{Q}_{c,p}^{-1}(T))p\\
				& -\bar{s}(S^{-1}_R(s,T)\mathcal{Q}_{c,p}^{-1}(T)p-S^{-1}_R(s,T)T\mathcal{Q}_{c,p}^{-1}(T)-\mathcal{Q}_{c,p}^{-1}(T))] Q_s^{-1}(p).
			\end{split}
		\end{align}
		\begin{align}\label{D-step 6}
			\begin{split}
				&\mathcal{Q}_{c,s}^{-1}(T)S_{L}^{-1}(p,T)=[(\mathcal{Q}_{c,s}^{-1}(T)-s\mathcal{Q}_{c,s}^{-1}(T)S_L^{-1}(p,T)+\mathcal{Q}_{c,s}^{-1}(T)TS_{L}^{-1}(p,T))p\\
				& -\bar{s}(\mathcal{Q}_{c,s}^{-1}(T)-s\mathcal{Q}_{c,s}^{-1}(T)S_L^{-1}(p,T)+\mathcal{Q}_{c,s}^{-1}(T)TS_{L}^{-1}(p,T))] Q_s^{-1}(p).
			\end{split}
		\end{align}
	\end{lemma}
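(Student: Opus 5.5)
Both identities are instances of one elementary algebraic fact. If $A$ is any $\mathcal{B}(V_5)$-valued expression, then
\[
(Ap-\bar s A)p-\bar s(Ap-\bar s A)=Ap^2-2s_0Ap+|s|^2A=A\,Q_s(p),
\]
using $s+\bar s=2s_0$ and $s\bar s=|s|^2$. Here the scalars $p$ and $\bar s$ act on the right and on the left of $A$ respectively, so no commutation between $p$ and $s$ is ever needed. Hence, once we know that the bracket $E$ appearing in each formula can be written as $E=Ap-\bar sA$ for the appropriate left-hand side $A$, the right-hand side becomes $(Ep-\bar s E)Q_s^{-1}(p)=A\,Q_s(p)Q_s^{-1}(p)=A$. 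This is exactly the mechanism behind Theorem~\ref{Bres}, and the plan is simply to identify $E$ in each case.

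For \eqref{D-step 3} take $A=S^{-1}_R(s,T)\mathcal{Q}_{c,p}^{-1}(T)$. The bracket is
\[
E=S^{-1}_R(s,T)\mathcal{Q}_{c,p}^{-1}(T)p-S^{-1}_R(s,T)T\mathcal{Q}_{c,p}^{-1}(T)-\mathcal{Q}_{c,p}^{-1}(T).
\]
By \eqref{sr} we have $\mathcal{I}=sS^{-1}_R(s,T)-S^{-1}_R(s,T)T$, so
\[
\mathcal{Q}_{c,p}^{-1}(T)=sS^{-1}_R(s,T)\mathcal{Q}_{c,p}^{-1}(T)-S^{-1}_R(s,T)T\mathcal{Q}_{c,p}^{-1}(T).
\]
Substituting, the $T$-terms cancel and $E=Ap-sA$. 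This has $s$ where the target form has $\bar s$, so it is the one point that needs care: it has to be reconciled with the bracket's $-\bar s(\cdot)$ structure. The cleanest route is to check instead the equivalent identity
\[
(Ap-sA)p-\bar s(Ap-sA)=A\,Q_s(p).
\]
Expanding the left side gives $Ap^2-sAp-\bar sAp+|s|^2A$. This equals $A\,Q_s(p)$ as soon as $sA$ and $\bar sA$ may be rewritten as $As$ and $A\bar s$, with the scalar then combining with $p$ on the right. This is legitimate because $A$ commutes with the paravector $s$. Indeed, $\mathcal{Q}_{c,p}^{-1}(T)$ commutes with $s$, since its only Clifford coefficients are $p$-powers multiplying operators built from $T+\overline T$ and $T\overline T$, which are scalar. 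For $S^{-1}_R(s,T)=\mathcal{Q}_{c,s}^{-1}(T)(s-\overline T)$, however, the factor $\overline T$ is only a paravector operator.

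So the main obstacle is exactly this ordering issue. I would resolve it by working with the form $E=Ap-\bar sA$ instead: move $s$ to the right using the right-resolvent identity written as $\mathcal{I}=S^{-1}_R(s,T)s-\ldots$, and track where each scalar sits. If that does not close, I would fall back on the direct expansion of both sides into powers of $p$ and $s$, using the commutativity of $\mathcal{Q}_{c,s}^{-1}(T)$ and $\mathcal{Q}_{c,p}^{-1}(T)$ together with Lemma~\ref{Lemmastep0}.

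For \eqref{D-step 6} take $A=\mathcal{Q}_{c,s}^{-1}(T)S_L^{-1}(p,T)$. The bracket is $\mathcal{Q}_{c,s}^{-1}(T)-sA+\mathcal{Q}_{c,s}^{-1}(T)TS^{-1}_L(p,T)$. Using \eqref{sl} in the form
\[
\mathcal{I}=S^{-1}_L(p,T)p-TS^{-1}_L(p,T),
\]
we get
\[
\mathcal{Q}_{c,s}^{-1}(T)=Ap-\mathcal{Q}_{c,s}^{-1}(T)TS_L^{-1}(p,T),
\]
so the $T$-terms cancel and the bracket equals $Ap-sA$. The same quadratic identity then gives $(Ap-sA)p-\bar s(Ap-sA)=A\,Q_s(p)$. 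Here the needed commutation, that $sA$ and $\bar sA$ can be moved past $A$, holds because $\mathcal{Q}_{c,s}^{-1}(T)$ is intrinsic in $s$ and $s$ sits on the far left. Lemma~\ref{Lemmastep0} then lets us cancel $Q_s(p)Q_s^{-1}(p)$ on the right, which completes the argument.
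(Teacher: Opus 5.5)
Your reduction is the right one. Using \eqref{sr} on $\mathcal{Q}_{c,p}^{-1}(T)$ and \eqref{sl} on $\mathcal{Q}_{c,s}^{-1}(T)$, the bracket in \eqref{D-step 3} and in \eqref{D-step 6} equals $Ap-sA$, with $A=S^{-1}_R(s,T)\mathcal{Q}_{c,p}^{-1}(T)$ and $A=\mathcal{Q}_{c,s}^{-1}(T)S^{-1}_L(p,T)$ respectively. However, the proposal does not finish, because you misdiagnose the last step.

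The identity you open with is false: $(Ap-\bar sA)p-\bar s(Ap-\bar sA)=Ap^2-2\bar s\,Ap+\bar s^{2}A$, which is not $A\,Q_s(p)$ unless $s$ is real. So your suggested fix for \eqref{D-step 3}, passing to the form $Ap-\bar sA$, cannot work, and the bracket is not of that form anyway. You then assert that $Ap^2-sAp-\bar sAp+|s|^2A=A\,Q_s(p)$ requires $A$ to commute with $s$. That commutation is false in both cases. $S^{-1}_R(s,T)$ contains $\overline{T}$, and $\mathcal{Q}_{c,s}^{-1}(T)S^{-1}_L(p,T)$ contains $p-\overline{T}$. Even $\mathcal{Q}_{c,p}^{-1}(T)$ fails to commute with $s$ when $p$ and $s$ lie in different slices. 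So, as written, \eqref{D-step 3} is left open and \eqref{D-step 6} rests on a false justification. That you treat two structurally identical cases differently is a symptom of this.

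The missing observation is that no commutation is needed. Both $s$ and $\bar s$ act on $A$ from the left, so
\[
-sAp-\bar sAp=-(s+\bar s)Ap=-2s_0\,Ap,\qquad \bar s\,(sA)=|s|^2A.
\]
Since $2s_0$ and $|s|^2$ are real, $(Ap-sA)p-\bar s(Ap-sA)=A(p^2-2s_0p+|s|^2)=A\,Q_s(p)$. Multiplying on the right by $Q_s^{-1}(p)$ gives both identities; Lemma~\ref{Lemmastep0} is not needed for this cancellation. This is exactly the mechanism behind Theorem~\ref{Bres}, whose bracket $S^{-1}_R(s,T)B-BS^{-1}_L(p,T)$ equals $Ap-sA$ for $A=S^{-1}_R(s,T)BS^{-1}_L(p,T)$.

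With this one-line repair your proof is complete and more elementary than the paper's. The paper specializes \eqref{Breseq} to $B=T$ and $B=\mathcal{I}$ and multiplies on the right by $8\mathcal{Q}_{c,p}^{-1}(T)$ and $-8\mathcal{Q}_{c,p}^{-1}(T)p$, using Lemma~\ref{Lemmastep0} to move these through $Q_s^{-1}(p)$. It then adds the results and invokes \eqref{rK1}, doing the mirror-image computation on the left for \eqref{D-step 6}. You need only \eqref{sr}, \eqref{sl}, and the reality of $s+\bar s$ and $s\bar s$.
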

	\begin{proof}
		We begin by proving equation \eqref{D-step 3}. Let $B = T$ in the generalized $S$-resolvent equation (cf. Theorem \ref{Bres}). Multiplying it on the right by $8\mathcal{Q}_{c,p}^{-1}(T)$ and using \eqref{K-resol.op} together with Lemma \ref{Lemmastep0}, we obtain
		\begin{align}\label{D-step 1}
			\begin{split}
				-S^{-1}_R(s,T)TS^{-1}_{\Delta,L}(p,T)&=[(8S^{-1}_R(s,T)T\mathcal{Q}_{c,p}^{-1}(T)+TS^{-1}_{\Delta,L}(p,T))p\\
				&-\bar{s}(8S^{-1}_R(s,T)T\mathcal{Q}_{c,p}^{-1}(T)+TS^{-1}_{\Delta,L}(p,T))] Q^{-1}_s(p).
			\end{split}
		\end{align}
		Next, we consider the case $B = I$ in Theorem \ref{Bres}. Multiplying the resulting equation on the right by $-8\mathcal{Q}_{c,p}^{-1}(T)p$ and using \eqref{K-resol.op} together with Lemma \ref{Lemmastep0}, we get
		\begin{align}\label{D-step 2}
			\begin{split}
				S^{-1}_R(s,T)S^{-1}_{\Delta,L}(p,T)p=&[(-8S^{-1}_R(s,T)\mathcal{Q}_{c,p}^{-1}(T)p-S^{-1}_{\Delta,L}(p,T)p)p\\
				&-\bar{s}(-8S^{-1}_R(s,T)\mathcal{Q}_{c,p}^{-1}(T)p-S^{-1}_{\Delta,L}(p,T)p)] Q_s^{-1}(p).
			\end{split}
		\end{align}
		By adding \eqref{D-step 1} and \eqref{D-step 2} and using \eqref{rK1} on both the right- and left-hand sides, we obtain
		\begin{align*}
			&S^{-1}_R(s,T)\mathcal{Q}_{c,p}^{-1}(T)=[(S^{-1}_R(s,T)\mathcal{Q}_{c,p}^{-1}(T)p-S^{-1}_R(s,T)T\mathcal{Q}_{c,p}^{-1}(T)-\mathcal{Q}_{c,p}^{-1}(T))p\\
			& -\bar{s}(S^{-1}_R(s,T)\mathcal{Q}_{c,p}^{-1}(T)p-S^{-1}_R(s,T)T\mathcal{Q}_{c,p}^{-1}(T)-\mathcal{Q}_{c,p}^{-1}(T))] Q_s^{-1}(p).
		\end{align*}	
		Regarding equation \eqref{D-step 6}, one uses the same techniques; the only difference being that, instead of right multiplication, one uses left multiplication.
	\end{proof}
	\begin{lemma}\label{D-midstep}
		Let $T\in \mathcal{BC}^{0,1}(V_5)$. For $p$, $s\in\rho_S(T)$ with $s\not\in[p]$, the following relations hold:
		\begin{align}
			\label{D-step 8}
			\mathcal{Q}_{c,s}^{-1}(T)T\mathcal{Q}_{c,p}^{-1}(T)&=[(\mathcal{Q}_{c,s}^{-1}(T)TS_{L}^{-1}(p,T)-S^{-1}_R(s,T)T\mathcal{Q}_{c,p}^{-1}(T))p\\
			\nonumber
			&-\bar{s}(\mathcal{Q}_{c,s}^{-1}(T)TS_{L}^{-1}(p,T)-S^{-1}_R(s,T)T\mathcal{Q}_{c,p}^{-1}(T))]Q_s^{-1}(p).
		\end{align}
		\begin{align}
			\label{D-step 81}
			\mathcal{Q}_{c,s}^{-1}(T)\overline{T}\mathcal{Q}_{c,p}^{-1}(T)&=-[(S^{-1}_R(s,T)\mathcal{Q}_{c,p}^{-1}(T)p-s\mathcal{Q}_{c,s}^{-1}(T)S_L^{-1}(p,T))p\\
			\nonumber
			&-\bar{s}(S^{-1}_R(s,T)\mathcal{Q}_{c,p}^{-1}(T)p-s\mathcal{Q}_{c,s}^{-1}(T)S_L^{-1}(p,T))]Q_s^{-1}(p).
		\end{align}
	\end{lemma}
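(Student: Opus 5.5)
The plan is to verify both identities directly, after clearing the factor $Q_s^{-1}(p)$, rather than by combining earlier resolvent equations. Each identity has the form $X=[A p-\bar s A]Q_s^{-1}(p)$, where $X$ is the left-hand side and $A$ is the bracketed difference. Since $Q_s(p)=p^2-2s_0p+|s|^2$ is invertible for $s\notin[p]$, it suffices to show
$$A p-\bar s A = X\,Q_s(p),$$
and then multiply on the right by $Q_s^{-1}(p)$.

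\textbf{Ingredients.} Besides the definitions \eqref{eq:1.7} and \eqref{eq:1.9}, two commutation facts will be used throughout.
\begin{enumerate}
\item[(a)] The operator $\mathcal{Q}_{c,s}(T)=s^2\mathcal{I}-s(T+\overline{T})+T\overline{T}$ has coefficients $T+\overline{T}=2T_0$ and $T\overline{T}=\sum_{i}T_i^2$, which are real (scalar-type) operators. Hence $\mathcal{Q}_{c,s}^{-1}(T)$ commutes with $s$, $\mathcal{Q}_{c,p}^{-1}(T)$ commutes with $p$, and $T\overline{T}=\overline{T}T$, since the components commute.
\item[(b)] $S_R^{-1}(s,T)=\mathcal{Q}_{c,s}^{-1}(T)(s-\overline{T})$ and $S_L^{-1}(p,T)=(p-\overline{T})\mathcal{Q}_{c,p}^{-1}(T)$. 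The factors $\mathcal{Q}_{c,\cdot}^{-1}(T)$ may be moved to the outside because they commute with $T$ and $\overline{T}$.
\end{enumerate}
Finally, Lemma~\ref{Lemmastep0} lets us move $Q_s(p)$ past $\mathcal{Q}_{c,p}^{-1}(T)$.

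\textbf{Proof of \eqref{D-step 8}.} Set $A=\mathcal{Q}_{c,s}^{-1}(T)TS_L^{-1}(p,T)-S_R^{-1}(s,T)T\mathcal{Q}_{c,p}^{-1}(T)$. Using (b) and $T\overline{T}=\overline{T}T$,
$$A=\mathcal{Q}_{c,s}^{-1}(T)\,[Tp-sT]\,\mathcal{Q}_{c,p}^{-1}(T).$$
Multiply on the right by $p$, commuting $p$ through $\mathcal{Q}_{c,p}^{-1}(T)$. Multiply on the left by $\bar s$, commuting $\bar s$ through $\mathcal{Q}_{c,s}^{-1}(T)$. This gives
$$Ap-\bar sA=\mathcal{Q}_{c,s}^{-1}(T)\,[Tp^2-sTp-\bar sTp+\bar s sT]\,\mathcal{Q}_{c,p}^{-1}(T).$$
The relative order of $s$, $T$, $p$ is kept in every term, so $sTp+\bar sTp=2s_0Tp$ and $\bar s sT=|s|^2T$. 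The bracket is therefore exactly $T\,Q_s(p)$. Lemma~\ref{Lemmastep0} moves $Q_s(p)$ to the far right, which yields $Ap-\bar sA=\mathcal{Q}_{c,s}^{-1}(T)T\mathcal{Q}_{c,p}^{-1}(T)\,Q_s(p)$, as required.

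\textbf{Proof of \eqref{D-step 81}.} Set $A'=S_R^{-1}(s,T)\mathcal{Q}_{c,p}^{-1}(T)p-s\mathcal{Q}_{c,s}^{-1}(T)S_L^{-1}(p,T)$. The same manipulation gives
$$A'=\mathcal{Q}_{c,s}^{-1}(T)\,[(s-\overline{T})p-s(p-\overline{T})]\,\mathcal{Q}_{c,p}^{-1}(T)=\mathcal{Q}_{c,s}^{-1}(T)\,[s\overline{T}-\overline{T}p]\,\mathcal{Q}_{c,p}^{-1}(T).$$
Then
$$A'p-\bar sA'=\mathcal{Q}_{c,s}^{-1}(T)\,[s\overline{T}p-\overline{T}p^2-\bar s s\overline{T}+\bar s\overline{T}p]\,\mathcal{Q}_{c,p}^{-1}(T)=-\mathcal{Q}_{c,s}^{-1}(T)\,\overline{T}\,Q_s(p)\,\mathcal{Q}_{c,p}^{-1}(T).$$
Lemma~\ref{Lemmastep0} and multiplication by $Q_s^{-1}(p)$ then give \eqref{D-step 81}, including the overall minus sign.

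\textbf{Main obstacle.} The delicate point is noncommutativity. The paravectors $s$ and $p$ do not commute with $T$ or $\overline{T}$, which have Clifford-valued components. So one must check that in each term $s$ or $\bar s$ sits to the left of $T$ or $\overline{T}$ and $p$ to the right, with all regroupings done only among like-positioned factors. The key justification is (a), which guarantees that only $\mathcal{Q}_{c,s}^{-1}(T)$ and $\mathcal{Q}_{c,p}^{-1}(T)$ are ever commuted with $s$ or $p$.
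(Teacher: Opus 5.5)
Your proof is correct and follows essentially the same route as the paper. You substitute the definitions to get $\mathcal{Q}_{c,s}^{-1}(T)[Tp-sT]\mathcal{Q}_{c,p}^{-1}(T)$ (resp.\ $\mathcal{Q}_{c,s}^{-1}(T)[s\overline{T}-\overline{T}p]\mathcal{Q}_{c,p}^{-1}(T)$), collect the bracket into $\pm(\cdot)\,Q_s(p)$, and commute $Q_s(p)$ past $\mathcal{Q}_{c,p}^{-1}(T)$. One correction: your remark in (b) that $\mathcal{Q}_{c,s}^{-1}(T)$ commutes with $T$ and $\overline{T}$ is false in general, since $\mathcal{Q}_{c,s}(T)=s^2\mathcal{I}-2sT_0+T\overline{T}$ contains the paravector $s$, which does not commute with the units $e_i$; you never actually use it, though, because the definitions \eqref{eq:1.7} already put the $\mathcal{Q}^{-1}$ factors on the outside.
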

	\begin{proof}
		We first observe that, by the definition of the left and right $S$-resolvent operators, we have
		\begin{align*}
			\mathcal{Q}_{c,s}^{-1}(T)T S^{-1}_L(p,T)-S^{-1}_R(s,T)T \mathcal{Q}_{c,p}^{-1}(T)&= \mathcal{Q}_{c,s}^{-1}(T)T(p \mathcal{I}-\overline{T}) \mathcal{Q}_{c,p}^{-1}(T)- \mathcal{Q}_{c,s}^{-1}(T)(s \mathcal{I}-\overline{T})T \mathcal{Q}_{c,p}^{-1}(T)\\
			&= \mathcal{Q}_{c,s}^{-1}(T)(Tp -sT) \mathcal{Q}_{c,p}^{-1}(T).
		\end{align*}
		This implies
		\begin{align*}
			\begin{split}
				&(\mathcal{Q}_{c,s}^{-1}(T)TS_{L}^{-1}(p,T)-S^{-1}_R(s,T)T\mathcal{Q}_{c,p}^{-1}(T))p-\bar{s}(\mathcal{Q}_{c,s}^{-1}(T)TS_{L}^{-1}(p,T)-S^{-1}_R(s,T)T\mathcal{Q}_{c,p}^{-1}(T))\\
				&=\mathcal{Q}_{c,s}^{-1}(T)( (Tp-sT)p-\bar{s}(Tp -sT))\mathcal{Q}_{c,p}^{-1}(T)\\
				&= \mathcal{Q}_{c,s}^{-1}(T)(Tp^2-(s+\bar{s})Tp +T|s|^2)\mathcal{Q}_{c,p}^{-1}(T)\\
				&=\mathcal{Q}_{c,s}^{-1}(T)T\mathcal{Q}_{c,p}^{-1}(T)Q_s(p).
			\end{split}
		\end{align*}
		This proves \eqref{D-step 8}. To show \eqref{D-step 81}, we observe
		\begin{align*}
			S^{-1}_R(s,T)\mathcal{Q}_{c,p}^{-1}(T)p-s\mathcal{Q}_{c,s}^{-1}(T)S_L^{-1}(p,T)&= \mathcal{Q}_{c,s}^{-1}(T)(s\mathcal{I}-\overline{T}) \mathcal{Q}_{c,p}^{-1}(T)p-s \mathcal{Q}_{c,s}^{-1}(T)(p \mathcal{I}-\overline{T}) \mathcal{Q}_{c,p}^{-1}(T)\\
			&= \mathcal{Q}_{c,s}^{-1}(T)(s \overline{T}-\overline{T}p)\mathcal{Q}_{c,p}^{-1}(T).
		\end{align*}
		This implies that
		\begin{align*}
			\begin{split}
				&(S^{-1}_R(s,T)\mathcal{Q}_{c,p}^{-1}(T)p-s\mathcal{Q}_{c,s}^{-1}(T)S_L^{-1}(p,T))p-\bar{s}(S^{-1}_R(s,T)\mathcal{Q}_{c,p}^{-1}(T)p-s\mathcal{Q}_{c,s}^{-1}(T)S_L^{-1}(p,T))\\
				&= \mathcal{Q}_{c,s}^{-1}(T)((s\overline{T}-\overline{T}p)p-\bar{s}(s\overline{T}-\overline{T}p))\mathcal{Q}_{c,p}^{-1}(T)\\
				&= -\mathcal{Q}_{c,s}^{-1}(T)\overline{T}\mathcal{Q}_{c,p}^{-1}(T)Q_s(p).\hfill\qedhere
			\end{split}
		\end{align*}
	\end{proof}
	\begin{theorem}\label{D-resolvent eq.}
		Let $T\in \mathcal{BC}^{0,1}(V_5)$. For $p$, $s\in\rho_S(T)$ with $s\not\in[p]$, we have the following equation:
		\begin{align}
			\nonumber
			S^{-1}_R(s,T)\mathcal{Q}_{c,p}^{-1}(T)&+\mathcal{Q}_{c,s}^{-1}(T)S_{L}^{-1}(p,T)-  2\mathcal{Q}_{c,s}^{-1}(T)\underline{T}\mathcal{Q}_{c,p}^{-1}(T)
			\\
			\label{preres}
			&=[(\mathcal{Q}_{c,s}^{-1}(T)-\mathcal{Q}_{c,p}^{-1}(T))p -\bar{s}(\mathcal{Q}_{c,s}^{-1}(T)-\mathcal{Q}_{c,p}^{-1}(T))] Q^{-1}_s(p),
		\end{align}
		where $\underline{T}= T_1 e_1+T_2 e_2+T_3 e_3+T_4 e_4+T_5 e_5$.
	\end{theorem}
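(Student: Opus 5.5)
The plan is to add the four identities \eqref{D-step 3}, \eqref{D-step 6}, \eqref{D-step 8} and \eqref{D-step 81} and then use $T-\overline{T}=2\underline{T}$. To keep the bookkeeping short, for an operator $A$ write $\Phi(A):=(Ap-\bar sA)Q_s^{-1}(p)$. Every right-hand side in Lemma \ref{Lemmastep1} and Lemma \ref{D-midstep} has the form $\Phi(\cdot)$, and $\Phi$ is additive.

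Adding \eqref{D-step 3} and \eqref{D-step 6} gives
$$S^{-1}_R(s,T)\mathcal{Q}_{c,p}^{-1}(T)+\mathcal{Q}_{c,s}^{-1}(T)S_L^{-1}(p,T)=\Phi(X),$$
where
\begin{align*}
X={}&\mathcal{Q}_{c,s}^{-1}(T)-\mathcal{Q}_{c,p}^{-1}(T)+S^{-1}_R(s,T)\mathcal{Q}_{c,p}^{-1}(T)p-s\mathcal{Q}_{c,s}^{-1}(T)S_L^{-1}(p,T)\\
&+\mathcal{Q}_{c,s}^{-1}(T)TS_L^{-1}(p,T)-S^{-1}_R(s,T)T\mathcal{Q}_{c,p}^{-1}(T).
\end{align*}

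The last two pairs in $X$ are exactly the arguments of $\Phi$ in Lemma \ref{D-midstep}. By \eqref{D-step 8},
$$\Phi\big(\mathcal{Q}_{c,s}^{-1}(T)TS_L^{-1}(p,T)-S^{-1}_R(s,T)T\mathcal{Q}_{c,p}^{-1}(T)\big)=\mathcal{Q}_{c,s}^{-1}(T)T\mathcal{Q}_{c,p}^{-1}(T).$$
By \eqref{D-step 81},
$$\Phi\big(S^{-1}_R(s,T)\mathcal{Q}_{c,p}^{-1}(T)p-s\mathcal{Q}_{c,s}^{-1}(T)S_L^{-1}(p,T)\big)=-\mathcal{Q}_{c,s}^{-1}(T)\overline{T}\mathcal{Q}_{c,p}^{-1}(T).$$

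Therefore
$$S^{-1}_R(s,T)\mathcal{Q}_{c,p}^{-1}(T)+\mathcal{Q}_{c,s}^{-1}(T)S_L^{-1}(p,T)=\Phi\big(\mathcal{Q}_{c,s}^{-1}(T)-\mathcal{Q}_{c,p}^{-1}(T)\big)+\mathcal{Q}_{c,s}^{-1}(T)(T-\overline{T})\mathcal{Q}_{c,p}^{-1}(T).$$
Since $T-\overline{T}=2\underline{T}$, moving this last term to the left-hand side gives exactly \eqref{preres}.

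The only step needing care is the sign bookkeeping when \eqref{D-step 3} and \eqref{D-step 6} are combined. In particular, the term $-\mathcal{Q}_{c,p}^{-1}(T)$ comes from \eqref{D-step 3} and $+\mathcal{Q}_{c,s}^{-1}(T)$ comes from \eqref{D-step 6}, and one must check that the remaining grouped terms match the arguments appearing in Lemma \ref{D-midstep} exactly. No commutation of $Q_s^{-1}(p)$ past operators is required, because every term already carries $Q_s^{-1}(p)$ on the far right.
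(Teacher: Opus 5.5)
Your proposal is correct and follows the paper's proof: add \eqref{D-step 3} and \eqref{D-step 6}, split the combined bracket into the three pieces $\mathcal{Q}_{c,s}^{-1}(T)-\mathcal{Q}_{c,p}^{-1}(T)$ and the two arguments of Lemma \ref{D-midstep}, then apply \eqref{D-step 8}, \eqref{D-step 81} and $T-\overline{T}=2\underline{T}$. Your operator $\Phi$ only makes explicit the additivity the paper uses implicitly when it regroups the sum.
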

	\begin{proof} By making the sum of \eqref{D-step 3} and \eqref{D-step 6} we obtain
		\begin{align*}
			\begin{split}
				&S^{-1}_R(s,T)\mathcal{Q}_{c,p}^{-1}(T)+\mathcal{Q}_{c,s}^{-1}(T)S_{L}^{-1}(p,T)\\
				&=[(\mathcal{Q}_{c,s}^{-1}(T)-\mathcal{Q}_{c,p}^{-1}(T))p -\bar{s}(\mathcal{Q}_{c,s}^{-1}(T)-\mathcal{Q}_{c,p}^{-1}(T))] Q_s^{-1}(p)\\
				&+[(\mathcal{Q}_{c,s}^{-1}(T)TS_{L}^{-1}(p,T)-S^{-1}_R(s,T)T\mathcal{Q}_{c,p}^{-1}(T))p-\bar{s}(\mathcal{Q}_{c,s}^{-1}(T)TS_{L}^{-1}(p,T)-S^{-1}_R(s,T)T\mathcal{Q}_{c,p}^{-1}(T))]Q_s^{-1}(p)\\
				&+[(S^{-1}_R(s,T)\mathcal{Q}_{c,p}^{-1}(T)p
				-s\mathcal{Q}_{c,s}^{-1}(T)S_L^{-1}(p,T))p-\bar{s}(S^{-1}_R(s,T)\mathcal{Q}_{c,p}^{-1}(T)p
				-s\mathcal{Q}_{c,s}^{-1}(T)S_L^{-1}(p,T))]Q_s^{-1}(p).
			\end{split}
		\end{align*}
		The result follows by using \eqref{D-step 8} and \eqref{D-step 81} and the fact that $T-\overline{T}=2 \underline{T}$.
	\end{proof}
	\begin{theorem}[Resolvent equation for the bi-harmonic functional calculus based on the $S$-spectrum]\label{corol D-resolent eq.}  Let $T\in \mathcal{BC}^{0,1}(V_5)$. For $p$, $s\in\rho_S(T)$ with $s\not\in[p]$, we have the following equation:
		\begin{align}\label{corol D-resolent}
			\begin{split}
				&S_{D,R}^{-1} (s,T) S_{L}^{-1} (p,T)+S_{R}^{-1} (s,\overline{T}) S_{D,L}^{-1} (p,T)\\
				&=[(S^{-1}_{D,L}(s,T)-(S^{-1}_{D,L}(p,T))p -\bar{s}((S^{-1}_{D,L}(s,T)-(S^{-1}_{D,L}(p,T))] Q_s^{-1}(p).
			\end{split}
		\end{align}
	\end{theorem}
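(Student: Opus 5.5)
The plan is to derive the statement directly from Theorem \ref{D-resolvent eq.} by multiplying \eqref{preres} by $-4$ and rewriting each term with \eqref{H1}. The right-hand side is immediate. Since $S^{-1}_{D,L}(s,T)=-4\mathcal{Q}_{c,s}^{-1}(T)$ and $S^{-1}_{D,L}(p,T)=-4\mathcal{Q}_{c,p}^{-1}(T)$, multiplying by $-4$ turns
$[(\mathcal{Q}_{c,s}^{-1}(T)-\mathcal{Q}_{c,p}^{-1}(T))p-\bar s(\cdots)]Q_s^{-1}(p)$ into exactly the right-hand side of \eqref{corol D-resolent}. Here $-4$ is real, so it commutes with $p$, $\bar s$ and $Q_s^{-1}(p)$.

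On the left-hand side, $-4S^{-1}_R(s,T)\mathcal{Q}_{c,p}^{-1}(T)=S^{-1}_R(s,T)S^{-1}_{D,L}(p,T)$ and $-4\mathcal{Q}_{c,s}^{-1}(T)S^{-1}_L(p,T)=S^{-1}_{D,R}(s,T)S^{-1}_L(p,T)$. The remaining term $8\mathcal{Q}_{c,s}^{-1}(T)\underline{T}\mathcal{Q}_{c,p}^{-1}(T)$ must be absorbed into the first one, producing $S^{-1}_R(s,\overline T)S^{-1}_{D,L}(p,T)$.

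To do this, note that $\overline{\overline T}=T$, and that $T\overline T$ and $T+\overline T$ are symmetric under $T\leftrightarrow\overline T$. Hence $\mathcal{Q}_{c,s}(\overline T)=\mathcal{Q}_{c,s}(T)$, and so
$$S^{-1}_R(s,\overline T)=\mathcal{Q}_{c,s}^{-1}(T)(s\mathcal{I}-T).$$
Comparing this with $S^{-1}_R(s,T)=\mathcal{Q}_{c,s}^{-1}(T)(s\mathcal{I}-\overline T)$ gives
$$S^{-1}_R(s,\overline T)=S^{-1}_R(s,T)-\mathcal{Q}_{c,s}^{-1}(T)(T-\overline T)=S^{-1}_R(s,T)-2\mathcal{Q}_{c,s}^{-1}(T)\underline T.$$
Multiplying on the right by $S^{-1}_{D,L}(p,T)=-4\mathcal{Q}_{c,p}^{-1}(T)$ then yields
$$S^{-1}_R(s,\overline T)S^{-1}_{D,L}(p,T)=-4S^{-1}_R(s,T)\mathcal{Q}_{c,p}^{-1}(T)+8\mathcal{Q}_{c,s}^{-1}(T)\underline T\mathcal{Q}_{c,p}^{-1}(T),$$
which is precisely $-4$ times the first and third terms of \eqref{preres}. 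Combining this with the other two identifications gives \eqref{corol D-resolent}.

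The only delicate point is making sure that $S^{-1}_R(s,\overline T)$ is well defined for $s\in\rho_S(T)$. This is the step to check first, and it follows from the equality $\mathcal{Q}_{c,s}(\overline T)=\mathcal{Q}_{c,s}(T)$ established above. One should also keep the ordering of the noncommuting factors $s$, $\underline T$ and the Clifford units correct when splitting $s\mathcal{I}-T$, but since $s$ enters only as a left factor this is routine.
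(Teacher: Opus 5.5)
Your proposal is correct and matches the paper's proof: the paper also uses $T-\overline{T}=2\underline{T}$ to rewrite $S^{-1}_R(s,T)\mathcal{Q}_{c,p}^{-1}(T)-2\mathcal{Q}_{c,s}^{-1}(T)\underline{T}\mathcal{Q}_{c,p}^{-1}(T)=\mathcal{Q}_{c,s}^{-1}(T)(s-T)\mathcal{Q}_{c,p}^{-1}(T)=S^{-1}_R(s,\overline{T})\mathcal{Q}_{c,p}^{-1}(T)$, and then multiplies \eqref{preres} by $-4$. You also check explicitly that $\mathcal{Q}_{c,s}(\overline{T})=\mathcal{Q}_{c,s}(T)$, which relies on $T\overline{T}=\overline{T}T$, i.e.\ on commuting components; the paper leaves this implicit.
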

	\begin{proof}
		The result follows from Theorem \ref{D-resolvent eq.} since
		\begin{align*}
			S^{-1}_R(s,T)\mathcal{Q}_{c,p}^{-1}(T)-  2\mathcal{Q}_{c,s}^{-1}(T)\underline{T}\mathcal{Q}_{c,p}^{-1}(T)
			&= \mathcal{Q}_{c,s}^{-1}(T)(s-\overline{T} -2\underline{T})\mathcal{Q}_{c,p}^{-1}(T)\\
			&=\mathcal{Q}_{c,s}^{-1}(T)(s-T)\mathcal{Q}_{c,p}^{-1}(T)\\
			&= S^{-1}_R(s,\overline{T})\mathcal{Q}_{c,p}^{-1}(T),
		\end{align*}
		and by multiplying both sides of the equation \eqref{preres} by $-4$ we get the equation \eqref{corol D-resolent}.
	\end{proof}

	\begin{theorem}[Product rule for the bi-harmonic functional calculus]\label{D prod. rule} Let $T\in \mathcal{BC}^{0,1}(V_5)$ be such that its components $T_i$, $i=1,2,3,4,5$ have real spectra and $T_0=0$. If $f\in\mathcal{N}_{\sigma_S(T)}(U)$ and $g\in\mathcal{SH}^L_{\sigma_S(T)}(U)$, then we have
		\begin{equation}
			(fg)_D(T)=f_D(T)g(T)+f(\overline{T})g_D(T).
		\end{equation}
	\end{theorem}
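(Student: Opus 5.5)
We follow the standard product-rule strategy of the $S$-functional calculus: express both products on the right-hand side as double integrals, combine them with the resolvent equation \eqref{corol D-resolent}, and collapse the double integral with the slice hyperholomorphic Cauchy formula. Take two bounded slice Cauchy domains $U_1,U_2$ with $\sigma_S(T)\subset U_1$, $\overline{U}_1\subset U_2$ and $\overline{U}_2\subset U$. Let $s\in\partial(U_1\cap\mathbb{C}_I)$ and $p\in\partial(U_2\cap\mathbb{C}_I)$, so that $s\notin[p]$.

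\emph{Step 1: write the two products as double integrals.} Since $f$ is intrinsic, we may use the right-type representations
\[
f_D(T)=\frac{1}{2\pi}\int_{\partial(U_1\cap\mathbb{C}_I)} f(s)\,ds_I\,S^{-1}_{D,R}(s,T),\qquad
f(\overline{T})=\frac{1}{2\pi}\int_{\partial(U_1\cap\mathbb{C}_I)} f(s)\,ds_I\,S^{-1}_R(s,\overline{T}).
\]
The second identity needs $\sigma_S(\overline{T})=\sigma_S(T)$. This holds because $\mathcal{Q}_{c,s}(\overline T)=\mathcal{Q}_{c,s}(T)$ and $S^{-1}_R(s,\overline T)=\mathcal{Q}_{c,s}^{-1}(T)(s-T)$, as already used in the proof of Theorem \ref{corol D-resolent eq.}. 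For $g(T)$ and $g_D(T)$ we use the left-type integrals over $\partial(U_2\cap\mathbb{C}_I)$. This gives
\[
f_D(T)g(T)+f(\overline T)g_D(T)=\frac{1}{(2\pi)^2}\int_{\partial(U_1\cap\mathbb{C}_I)}\int_{\partial(U_2\cap\mathbb{C}_I)} f(s)\,ds_I\big[S^{-1}_{D,R}(s,T)S^{-1}_L(p,T)+S^{-1}_R(s,\overline T)S^{-1}_{D,L}(p,T)\big]dp_I\,g(p).
\]

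\emph{Step 2: substitute the resolvent equation.} By Theorem \ref{corol D-resolent eq.}, the bracket equals $[(A(s)-A(p))p-\bar s(A(s)-A(p))]Q_s^{-1}(p)$, where $A(\cdot)=S^{-1}_{D,L}(\cdot,T)=-4\mathcal{Q}_{c,\cdot}^{-1}(T)$. The double integral splits into two terms.

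For the terms containing $A(s)$, the operator $A(s)$ does not depend on $p$. We integrate in $p$ first, using the classical lemma of the $S$-functional calculus (as in \cite{ACGS,CGK}): for $B\in\mathcal{B}(V_n)$ and $s\notin\overline{U}_2$,
\[
\frac{1}{2\pi}\int_{\partial(U_2\cap\mathbb{C}_I)}(Bp-\bar sB)Q_s^{-1}(p)\,dp_I\,g(p)=0 .
\]
This applies because $p\mapsto (Bp-\bar s B)Q_s^{-1}(p)$ is right slice hyperholomorphic and its singularities lie in $[s]$, which lies outside $U_2$. Hence these terms vanish.

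For the terms containing $A(p)$, we integrate in $s$ first. Lemma \ref{Lemmastep0} lets us move $Q_s^{-1}(p)$ past $A(p)$. The remaining expression $-(p-\bar s)Q_s^{-1}(p)$ is, up to sign, the right slice Cauchy kernel $S^{-1}_R(s,p)$ in form I, as noted in Remark \ref{listS}. Since $f$ is intrinsic, it commutes with the relevant quantities on the slice $\mathbb{C}_I$. The Cauchy formula \eqref{integrals} then gives
\[
\frac{1}{2\pi}\int_{\partial(U_1\cap\mathbb{C}_I)} f(s)\,ds_I\,(\bar s-p)Q_s^{-1}(p)=f(p),
\]
for $p$ in the appropriate region, with the sign fixed by the orientation convention. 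What remains is $\frac{1}{2\pi}\int_{\partial(U_2\cap\mathbb{C}_I)}A(p)\,dp_I\,f(p)g(p)=(fg)_D(T)$, because $fg\in\mathcal{SH}^L$ when $f$ is intrinsic.

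\emph{Expected obstacle.} The main difficulty is the bookkeeping in Step 2. The operator $A(p)$ must be moved across the Clifford-valued factors $p$, $\bar s$, $ds_I$ and $f(s)$ so that the Cauchy formula can be applied in $s$. This works because $A(p)$ is intrinsic in $p$ and is built from commuting real-component operators, so it commutes with elements of $\mathbb{C}_I$ on the slice. It also works because $f$ is intrinsic, which gives $f(s)\,ds_I=ds_I\,f(s)$ on $\mathbb{C}_I$. We also have to check carefully that the domains are nested so that the point $p\in\partial U_2$ lies where the $s$-integral over $\partial U_1$ reproduces $f(p)$, and we must track the resulting overall sign. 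The standing hypotheses on $T$ (real spectra, $T_0=0$) are needed only so that $f_D(T)$ and $g_D(T)$ are well defined; the algebra itself uses only Theorem \ref{corol D-resolent eq.}.
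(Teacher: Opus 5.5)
Your strategy matches the paper's: write both products as one double integral, substitute \eqref{corol D-resolent}, kill one group of terms with Cauchy's theorem, and evaluate the other with a Cauchy-type formula. However, your nesting of domains is the reverse of what Step 2 needs, and with it both claims of Step 2 are false. You take $s\in\partial(U_1\cap\mathbb{C}_I)$ and $p\in\partial(U_2\cap\mathbb{C}_I)$ with $\overline{U}_1\subset U_2$. Then $s\in U_2$, hence $[s]\subset U_2$. So the hypothesis $s\notin\overline{U}_2$ of your vanishing lemma is violated, since $p\mapsto Q_s^{-1}(p)$ is singular inside $U_2$. In fact $A(s)=-4\mathcal{Q}_{c,s}^{-1}(T)$ commutes with $\mathbb{C}_I$, so $(A(s)p-\bar sA(s))Q_s^{-1}(p)=A(s)(p-\bar s)Q_s^{-1}(p)=A(s)S^{-1}_L(p,s)$ (form II). The $p$-integral therefore equals $2\pi A(s)g(s)$, not $0$. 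Symmetrically, $p\in\partial U_2$ lies outside $\overline{U}_1$, so $s\mapsto(\bar s-p)Q_s^{-1}(p)=S^{-1}_R(s,p)$ is regular on $\overline{U}_1$. The $s$-integral of the $A(p)$-terms is then $0$, not $f(p)$. The point you list as the ``expected obstacle'' is not a matter of careful bookkeeping; in your configuration it fails.

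The fix is to do what the paper does: put $p$ on the boundary of the inner domain $G_1$ and $s$ on the boundary of the outer domain $G_2\supset\overline{G}_1$. Then $[s]\cap\overline{G}_1=\emptyset$, so $p\mapsto Q_s^{-1}(p)$ and $p\mapsto pQ_s^{-1}(p)$ are intrinsic slice hyperholomorphic on $\overline{G}_1$, and the $A(s)$-terms vanish by the Cauchy integral theorem. Also $p\in G_2$, so the $s$-integral over $\partial(G_2\cap\mathbb{C}_I)$ returns $A(p)f(p)$. The paper gets this last step from \cite[Lemma 3.18]{ACGS}. Your direct argument is equally valid there and avoids that lemma: $A(p)$ commutes with $\mathbb{C}_I$ and $f(s)\in\mathbb{C}_I$, so the integrand becomes $A(p)f(s)\,ds_I\,S^{-1}_R(s,p)$ and the right Cauchy formula applies. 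Alternatively, you could keep your nesting and swap which terms survive. The $A(p)$-terms then vanish, and the $A(s)$-terms give $\frac{1}{2\pi}\int_{\partial(U_1\cap\mathbb{C}_I)}f(s)\,ds_I\,A(s)\,g(s)=(fg)_D(T)$, via the Cauchy formula for $g$ in $p$ instead of the one for $f$ in $s$.
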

	\begin{proof}
		
		Let $G_1$ and $G_2$ be two bounded slice Cauchy domains containing the $S$-spectrum of $T$, $\overline{G}_1\subset G_2$ and $\overline{G}_2\subset \text{dom}(f)\cap \text{dom}(g)$. Choose $p\in \partial(G_1\cap\mathbb{C}_I)$ and $s\in \partial(G_2\cap\mathbb{C}_I)$. For every $I\in\mathbb{S}$, it follows from the definition of the bi-harmonic functional calculus, see  \eqref{bih}, $S$-functional calculus, see Definition \ref{Sfun} together with the assumption that $f$ is intrinsic (cf. \cite[Prop. 5.11]{CDP25} and \cite[Thm. 3.2.11]{CGK}), and \eqref{corol D-resolent} we get
		\begin{align*}
			&f_D(T)g(T)+f(\overline{T})g_D(T)\\
			&=\frac{1}{(2\pi)^2}\int_{\partial(G_2\cap\mathbb{C}_I)}\int_{\partial(G_1\cap\mathbb{C}_I)} f(s)ds_I [S_{D,R}^{-1} (s,T) S_{L}^{-1} (p,T)+S_{R}^{-1} (s,\overline{T}) S_{D,L}^{-1} (p,T)] dp_I g(p)\\
			&=\frac{1}{(2\pi)^2}\int_{\partial(G_2\cap\mathbb{C}_I)}\int_{\partial(G_1\cap\mathbb{C}_I)} f(s)ds_I [(S^{-1}_{D,L}(s,T)-S^{-1}_{D,L}(p,T))p -\bar{s}(S^{-1}_{D,L}(s,T)-S^{-1}_{D,L}(p,T))] Q_s^{-1}(p)dp_I g(p).
		\end{align*}
		Since the maps $p\mapsto pQ_s^{-1}(p)$ and $p\mapsto Q_s^{-1}(p)$ are intrinsic slice hyperholomorphic on $\overline{G}_1$, by \cite[Cor. 2.8.3]{ColomboSabadiniStruppa2011} the right-hand side of the above expression simplifies as
		\begin{align*}
			\frac{1}{(2\pi)^2}\int_{\partial(G_2\cap\mathbb{C}_I)}\int_{\partial(G_1\cap\mathbb{C}_I)} f(s)ds_I [\bar{s}S^{-1}_{D,L}(s,T) -S^{-1}_{D,L}(p,T)p] Q_s^{-1}(p) dp_I g(p). \end{align*}
		Finally, by \cite[Lemma 3.18]{ACGS} (with $B:=S^{-1}_{D,L}(s,T)$) we get
		\begin{align*}
			&\frac{1}{(2\pi)^2}\int_{\partial(G_2\cap\mathbb{C}_I)}\int_{\partial(G_1\cap\mathbb{C}_I)} f(s)ds_I [\bar{s}S^{-1}_{D,L}(s,T) -S^{-1}_{D,L}(p,T)p] Q_s^{-1}(p) dp_I g(p)\\
			&= \frac{1}{(2\pi)^2}\int_{\partial(G_1\cap\mathbb{C}_I)}  S_{D,L}^{-1} (p,T) dp_I f(p)g(p)\\
			& =(fg)_D(T).\qedhere
		\end{align*}
	\end{proof}
	
	\begin{remark}
		We observe that in \eqref{preres} we can write
		$$
		\mathcal{Q}_{c,s}^{-1}(T) S^{-1}_L(p,T) - 2 \mathcal{Q}_{c,s}^{-1}(T) \,\underline{T}\, \mathcal{Q}_{c,p}^{-1}(T) = \mathcal{Q}_{c,s}^{-1}(T) S^{-1}_L(p, \overline{T}).
		$$
		From this, we can deduce the following equivalent product rule for the biharmonic functional calculus:
		$$(fg)_D(T)=f_D(T)g(\overline{T})+f(T)g_D(T).$$
	\end{remark}
	
	Another possible application of the resolvent equation for the biharmonic functional calculus is given by the following result.
	
	\begin{theorem}\label{D-Riesz projectors}
		Let $T\in \mathcal{BC}^{0,1}(V_5)$ be such that its components $T_i$, $i=1,2,3,4,5$ have real spectra and $T_0=0$.
		Assume $\sigma_S(T)=\sigma_{S,1}(T)\cup \sigma_{S,2}(T)$ with $\text{dist}(\sigma_{S,1}(T), \sigma_{S,2}(T))>0.$ Let $G_1,G_2\subset\mathbb{R}^{n+1}$ be two admissible sets for $T$ such that $\sigma_{S,1}(T)\subset G_1$ and $\overline{G_1}\subset G_2$ such that dist$( G_2, \sigma_{S,2}(T))>0$. Then the following operator is a projector
		\begin{equation}
			P := \frac{1}{32\pi} \int_{\partial (G_1\cap\mathbb{C}_I)}  S^{-1}_{D,R}(p,T) dp_I p = \frac{1}{32\pi} \int_{\partial (G_2\cap\mathbb{C}_I)}   s ds_I   S^{-1}_{D,L}(s,T).
		\end{equation}
		
	\end{theorem}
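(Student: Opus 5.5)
The plan is to obtain $P^2=P$ from the bi-harmonic resolvent equation \eqref{corol D-resolent}, following the scheme used for the Riesz projectors of the $S$-functional calculus. The equation does not contain a product of two $D$-resolvents, so $P^2$ is not computed directly. Instead, the resolvent equation will give a linear relation between $P$ and the classical Riesz projectors
\[
P_S:=\frac{1}{2\pi}\int_{\partial(G_1\cap\mathbb{C}_I)} S^{-1}_L(p,T)\,dp_I, \qquad
\widetilde P_S:=\frac{1}{2\pi}\int_{\partial(G_2\cap\mathbb{C}_I)} ds_I\, S^{-1}_R(s,\overline{T}),
\]
and $P^2=P$ will be deduced from that relation.

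\textbf{Step 1 (the two expressions for $P$ agree).} Since $S^{-1}_{D,L}=S^{-1}_{D,R}=-4\mathcal{Q}_{c,s}^{-1}(T)$, the map $s\mapsto -4\mathcal{Q}^{-1}_{c,s}(T)s$ is intrinsic slice hyperholomorphic on $G_2\setminus \sigma_{S,1}(T)$ (by \cite[Prop.~6.15]{CDP2026}). Intrinsic functions commute with $ds_I$ on each slice. The Cauchy theorem on $G_2\setminus\overline{G_1}$, which contains no $S$-spectrum, therefore moves the contour from $\partial(G_1\cap\mathbb{C}_I)$ to $\partial(G_2\cap\mathbb{C}_I)$ and converts $\int S^{-1}_{D,R}(p,T)\,dp_I\,p$ into $\int s\,ds_I\,S^{-1}_{D,L}(s,T)$.

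\textbf{Step 2 (integrate the resolvent equation).} I will multiply \eqref{corol D-resolent} on the left by $s\,ds_I$ and on the right by $dp_I\,p$, then integrate over $s\in\partial(G_2\cap\mathbb{C}_I)$ and $p\in\partial(G_1\cap\mathbb{C}_I)$.
\begin{itemize}
\item On the left-hand side the double integral factorizes. The first term gives a constant multiple of $P\,P_S$. The second gives a constant multiple of $\widetilde P_S\,P$, because the $S$-functional calculus applied to $\overline{T}$ is legitimate: $\mathcal{Q}_{c,s}(\overline T)=\mathcal{Q}_{c,s}(T)$, so $\sigma_S(\overline T)=\sigma_S(T)$ with the same splitting.
\item On the right-hand side I repeat the computation in the proof of Theorem \ref{D prod. rule}. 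The term containing $S^{-1}_{D,L}(s,T)$ vanishes, since $p\mapsto pQ_s^{-1}(p)$ and $p\mapsto Q_s^{-1}(p)$ are intrinsic slice hyperholomorphic on $\overline{G_1}$ (\cite[Cor.~2.8.3]{ColomboSabadiniStruppa2011}). The remaining term is treated with \cite[Lemma 3.18]{ACGS}: the $s$-integral reproduces the function $p$ and leaves a multiple of $\int S^{-1}_{D,L}(p,T)\,dp_I\,p$, i.e. a multiple of $P$.
\end{itemize}

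\textbf{Step 3 (identify $P$ with the spectral projectors).} Here I expect the main obstacle, namely tracking the constants precisely. From the relation of Step 2 I want to conclude $P=P_S$, or $P$ equal to a projector built from $P_S$ and $\widetilde P_S$. The first thing I will try is to express $P$ through the bi-harmonic calculus with $f=\chi_{G_1}(s)\,s$: then $P$ is a fixed multiple of $f_D(T)$, and $D(x)=1-n=-4$. It would also help to show that $\widetilde P_S=P_S$: both equal $\chi_{G_1}$ evaluated at $T$, respectively at $\overline{T}$, and they coincide because $\mathcal{Q}_{c,s}(T)$ is symmetric in $T$ and $\overline{T}$.

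With $P$ commuting with $P_S$ and $P=P_S$ established, the relation of Step 2 becomes $P^2=P$ after normalization by the factor $\frac{1}{32\pi}$. If the constants do not match exactly, I would instead compute $P^2$ directly. Writing $P^2$ as a double integral of $\mathcal{Q}^{-1}_{c,s}(T)\mathcal{Q}^{-1}_{c,p}(T)$, I would use Theorem \ref{D-resolvent eq.} together with \eqref{D-step 8}–\eqref{D-step 81} to reduce $\mathcal{Q}^{-1}_{c,s}(T)\mathcal{Q}^{-1}_{c,p}(T)$ to differences of single resolvents, and then apply the same two contour lemmas.
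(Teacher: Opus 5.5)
Your Step 1 is fine. Step 3 cannot be completed, because the identification $P=P_S$ is false. In fact, with the normalization $\frac{1}{32\pi}$ the operator $P$ is not idempotent at all (unless $P_S=0$), so the statement as printed cannot be proved.

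Your own observation already shows this. Since $S^{-1}_L(p,T)=(p-\overline{T})\mathcal{Q}^{-1}_{c,p}(T)$ and $\int_{\partial(G_1\cap\mathbb{C}_I)}\mathcal{Q}^{-1}_{c,p}(T)\,dp_I=0$, one gets
\[
\frac{1}{2\pi}\int_{\partial(G_1\cap\mathbb{C}_I)}\mathcal{Q}^{-1}_{c,p}(T)\,dp_I\,p=P_S,
\qquad\text{hence}\qquad
P=\frac{-4}{32\pi}\int_{\partial(G_1\cap\mathbb{C}_I)}\mathcal{Q}^{-1}_{c,p}(T)\,dp_I\,p=-\frac14\,P_S .
\]
The vanishing integral says the $D$-calculus kills locally constant functions; it is the same vanishing the paper uses between \eqref{int} and \eqref{p0}. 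The display is just $P=\frac{1}{16}(\chi_{G_1}x)_D(T)$ with $D(x)=-4$, and therefore $P^2=-\frac14P$.

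A concrete check: on $V=\mathbb{R}^2$ let $T=e_1T_1$ with $T_1=\mathrm{diag}(1,2)$ and all other components zero. Then $\mathcal{Q}_{c,s}(T)=\mathrm{diag}(s^2+1,s^2+4)$. For $\sigma_{S,1}(T)=\{s_0=0,\ |\underline{s}|=1\}$ the residue theorem gives $P=\mathrm{diag}(-\frac14,0)$. The operator that is a projector is
\[
-4P=-\frac{1}{8\pi}\int_{\partial(G_1\cap\mathbb{C}_I)}S^{-1}_{D,R}(p,T)\,dp_I\,p=P_S .
\]
The paper's proof reaches $P^2=P$ only through a slip by exactly this factor. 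The right-hand side of \eqref{int} is written with $S^{-1}_{D,L}(s,T)-S^{-1}_{D,L}(p,T)$, whereas \eqref{preres} has $\mathcal{Q}^{-1}_{c,s}(T)-\mathcal{Q}^{-1}_{c,p}(T)$, which is $-\frac14$ of it. Done consistently, \eqref{p11} equals $-16\pi^2P$ while \eqref{p0} equals $64\pi^2P^2$.

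Independently of the constant, the relation in your Step 2 cannot give idempotency, and its bookkeeping is off. With $s$ on the left and $p$ on the right, the factorized integrals are $\int S^{-1}_L(p,T)\,dp_I\,p=2\pi\,TP_S$ and $\int s\,ds_I\,S^{-1}_R(s,\overline{T})=2\pi\,\overline{T}P_S$. These are not multiples of $P_S$. Meanwhile \cite[Lemma 3.18]{ACGS} with $f(s)=s$ returns $S^{-1}_{D,L}(p,T)p$. So the right side becomes a multiple of $(\chi_{G_1}x^2)_D(T)=-8T_0P_S=0$, and the identity reduces to $P\,TP_S+\overline{T}P_S\,P=0$.

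Keeping only one of the two factors gives $PP_S=P$ or $P_SP=P$, which every multiple $cP_S$ satisfies. This is structural: each term on the left of \eqref{corol D-resolent} contains only one $D$-resolvent.

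A genuine square appears when you integrate the $\mathcal{Q}$-form \eqref{preres} and use $\int\mathcal{Q}^{-1}_{c,s}(T)\,ds_I=0$ to discard every term except $\int ds_I\,s\,\mathcal{Q}^{-1}_{c,s}(T)\cdot\int\mathcal{Q}^{-1}_{c,p}(T)\,dp_I\,p$, as the paper does. Your fallback points in that direction. Carried out correctly, it yields $P^2=-\frac14P$, so the theorem holds with $-\frac{1}{8\pi}$ in place of $\frac{1}{32\pi}$.
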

	\begin{proof}
		We multiply equation \eqref{preres} on the right by $p$, integrate with respect to  $ds_I$ over $\partial(G_2\cap\mathbb{C}_I)$ and with respect to $dp_I$ over $\partial(G_1\cap\mathbb{C}_I)$ to obtain the following expression
		\begin{align}\label{int}
			&\int_{\partial (G_1\cap\mathbb{C}_I)}\int_{\partial (G_2\cap\mathbb{C}_I)}    ds_I \Big(S^{-1}_R(s,T)\mathcal{Q}_{c,p}^{-1}(T)+\mathcal{Q}_{c,s}^{-1}(T)S_{L}^{-1}(p,T) - 2\mathcal{Q}_{c,s}^{-1}(T)\underline{T}\mathcal{Q}_{c,p}^{-1}(T) \Big) dp_I p\\
			\nonumber
			&=\int_{\partial (G_1\cap\mathbb{C}_I)}\int_{\partial (G_2\cap\mathbb{C}_I)}   ds_I \Big[(S^{-1}_{D,L}(s,T)-S^{-1}_{D,L}(p,T))p -\bar{s}((S^{-1}_{D,L}(s,T)-S^{-1}_{D,L}(p,T))\Big] Q_s^{-1}(p)dp_I p.
		\end{align}
		By \eqref{eq:1.7} we have $S^{-1}_R(s,T)=s\mathcal{Q}_{c,s}^{-1}(T)-\mathcal{Q}_{c,s}^{-1}(T)\overline{T}$. Thus by \cite[Thm. 5.12]{CDP25} we can write the above equation in the following way:
		\begin{align}
			\nonumber
			& \int_{\partial (G_1\cap\mathbb{C}_I)}\int_{\partial (G_2\cap\mathbb{C}_I)}    ds_I  \Big([s\mathcal{Q}_{c,s}^{-1}(T)-\mathcal{Q}_{c,s}^{-1}(T)\overline{T}]\mathcal{Q}_{c,p}^{-1}(T)
			+\mathcal{Q}_{c,s}^{-1}(T)S_{L}^{-1}(p,T) - 2\mathcal{Q}_{c,s}^{-1}(T)\underline{T}\mathcal{Q}_{c,p}^{-1}(T) \Big) dp_I p\\
			\label{p0}
			&= \int_{\partial (G_1\cap\mathbb{C}_I)} ds_I  s\mathcal{Q}_{c,s}^{-1}(T) \int_{\partial (G_2\cap\mathbb{C}_I)}  \mathcal{Q}_{c,p}^{-1}(T)dp_I p=64 \pi^2 P^2.
		\end{align}
		Now, we focus on the right-hand-side of \eqref{int}. Due to slice the hyperholomorphicity of the functions $p\mapsto Q_s^{-1}(p)$, $p\mapsto pQ_s^{-1}(p)$ and the Cauchy integral theorem, see \cite[Cor. 2.8.3]{ColomboSabadiniStruppa2011}, and \cite[Lemma 3.18]{ACGS}, we have
		\begin{align}
			\nonumber
			&\int_{\partial (G_1\cap\mathbb{C}_I)}\int_{\partial (G_2\cap\mathbb{C}_I)}    ds_I [(S^{-1}_{D,L}(s,T)-(S^{-1}_{D,L}(p,T))p -\bar{s}((S^{-1}_{D,L}(s,T)-(S^{-1}_{D,L}(p,T))] Q_s^{-1}(p)dp_I p\\
			\nonumber
			&= \int_{\partial (G_1\cap\mathbb{C}_I)}\int_{\partial (G_2\cap\mathbb{C}_I)}    ds_I [\bar{s}S^{-1}_{D,L}(p,T)-S^{-1}_{D,L}(p,T)p] Q_s^{-1}(p)dp_I p\\
			&= 2\pi\int_{\partial (G_1\cap\mathbb{C}_I)}S^{-1}_{D,L}(p,T) dp_I p=64 \pi^2 P.
			\label{p11}
		\end{align}
		By putting plugging \eqref{p11} and \eqref{p0} into \eqref{int} we get the result.
	\end{proof}

	\section{The harmonic case associate with the $\Delta D$ operator}
	
	In this section, we establish a resolvent equation for the harmonic functional calculus based on the $S$-spectrum, arising from the factorization in \eqref{12}. We then derive the corresponding product rule and Riesz projectors. The technique is more involved than the one used to obtain the resolvent equation in the previous section.
	
	The harmonic functional calculus in dimension $n=5$ was first established in \cite{Fivedim}, where the authors applied the operator $\Delta_6 D$ to the slice hyperholomorphic Cauchy kernel and obtained
	$$
	D \Delta_6 S^{-1}_L(s,x) = D \Delta_6 S^{-1}_R(s,x) = 16 \mathcal{Q}_{c,s}^{-2}(x).
	$$
	By combining this result with the Cauchy formula (see \eqref{integrals}), it is possible to obtain an integral representation for functions in the set $\mathcal{APH}_1(U)$ (see \eqref{A1}). This leads to the following notion.

	\begin{definition}[Harmonic functional calculus based on the $S$-spectrum]
		Let $T\in \mathcal{BC}^{0,1}(V_5)$ be such that its components $T_i$, $i=1,2,3,4,5$ have real spectra and $T_0=0$ (see Remark \ref{FCOMPON}). Let $U$ be a bounded slice Cauchy domain, and set $ds_I = ds(-I)$ for $I \in \mathbb{S}$.
		For any function $f \in \mathcal{SH}_{\sigma_S(T)}^L(U)$ (resp.\ $f \in \mathcal{SH}_{\sigma_S(T)}^R(U)$), let $ f_{D \Delta}=D \Delta_6 f(x)$ (resp. $ f_{D \Delta}= f(x)D \Delta_6$ ). We define the harmonic functional calculus based on the $S$-spectrum for $T$ as
		\begin{equation}
			\label{har}
			f_{D \Delta}(T)= \frac{1}{2 \pi} \int_{\partial(U \cap \mathbb{C}_I)} S_{D \Delta,L}^{-1}(s,T)ds_I f(s), \qquad \left(\hbox{resp.} \, \, f_{D \Delta}(T)= \frac{1}{2 \pi} \int_{\partial(U \cap \mathbb{C}_I)} f(s)ds_IS_{D \Delta, R}^{-1}(s,T) \right)
		\end{equation}	
		where the resolvent operators $S_{\Delta D,L}^{-1}(s,T)$ and $S_{\Delta D,R}^{-1}(s,T)$ are defined as
		\begin{equation}
			\label{H2}
			S_{\Delta D, L}^{-1}(s,T) = S_{\Delta D,R}^{-1}(s,T)=16 \mathcal{Q}_{c,s}^{-2}(T).
		\end{equation}
	\end{definition}
	
	In order to derive a resolvent equation for the harmonic functional calculus, we first recall the notion of the left (resp. right) $F$-resolvent operators, which are defined as follows:
	\begin{align}\label{F-resol.op}
		F_L(s,T)= 64S_L^{-1}(s,T)\mathcal{Q}^{-2}_{c,s}(T) ,\qquad \left( \hbox{resp.} \, F_R(s,T)=64\mathcal{Q}^{-2}_{c,s}(T)S_R^{-1}(s,T) \right).
	\end{align}
	For  $T \in \mathcal{BC}^{0,1}(V_5)$ and $s \in \rho_S(T)$, The $F$-resolvent operators satisfy the equations (see \cite[Thm. 7.3.1]{CGK}):
	\begin{equation}
		\label{rf1}
		F_L(s,T)s-TF_L(s,T)=64\mathcal{Q}_{c,s}^{-2}(T),
	\end{equation}
	\begin{equation}
		\label{rf2}
		sF_R(s,T)-F_R(s,T)T=64 \mathcal{Q}_{c,s}^{-2}(T).
	\end{equation}
	
	\begin{remark}
		The operators defined in \eqref{F-resol.op} are crucial to establish the $F$-functional calculus; see \cite{FABJONA}. This is a monogenic functional calculus equivalent to the one established by McIntosh (see \cite{JM}); see \cite{CDP25}.
	\end{remark}
	
	Before getting the resolvent equation for the harmonic-functional calculus, we require some technical lemmas.
	\begin{lemma}
		Let $T\in \mathcal{BC}^{0,1}(V_5)$. For $p,s\in\rho_S(T)$ with $s\not\in[p]$, we have the following equation:
		\begin{align}
			\begin{split}
				\label{c3}	
				&S_R^{-1}(s,T) \mathcal{Q}_{c,p}^{-2}(T)+\mathcal{Q}_{c,s}^{-2}(T)S_L^{-1}(p,T)=[(\mathcal{Q}_{c,s}^{-2}(T)-\mathcal{Q}_{c,p}^{-2}(T))p-\bar{s}(\mathcal{Q}_{c,s}^{-2}(T)-\mathcal{Q}_{c,p}^{-2}(T))] Q_s^{-1}(p)\\
				& +[(\mathcal{Q}_{c,s}^{-2}(T)T S_{L}^{-1}(p,T)-s \mathcal{Q}_{c,s}^{-2}(T) S_{L}^{-1}(p,T)+S_R^{-1}(s,T) \mathcal{Q}_{c,p}^{-2}(T)p-S_R^{-1}(s,T) T \mathcal{Q}_{c,p}^{-2}(T))p\\
				&-\bar{s}(\mathcal{Q}_{c,s}^{-2}(T)T S_{L}^{-1}(p,T)-s \mathcal{Q}_{c,s}^{-2}(T) S_{L}^{-1}(p,T)+S_R^{-1}(s,T) \mathcal{Q}_{c,p}^{-2}(T)p-S_R^{-1}(s,T) T \mathcal{Q}_{c,p}^{-2}(T))p] Q_s^{-1}(p),
			\end{split}
		\end{align}
		where $Q_s(p):=p^2-2s_0p+|s|^2$.
	\end{lemma}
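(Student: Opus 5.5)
The proof follows the pattern of Lemma \ref{Lemmastep1}, with $\mathcal{Q}_{c,p}^{-1}(T)$ replaced by $\mathcal{Q}_{c,p}^{-2}(T)$. The role of $S^{-1}_{\Delta,L}$ and \eqref{rK1}, \eqref{rK2} is taken by the $F$-resolvent operators \eqref{F-resol.op} and the relations \eqref{rf1}, \eqref{rf2}. I will derive two identities, one for $S_R^{-1}(s,T)\mathcal{Q}_{c,p}^{-2}(T)$ and one for $\mathcal{Q}_{c,s}^{-2}(T)S_L^{-1}(p,T)$, and add them. I read the final factor $p$ inside the last square bracket of \eqref{c3} as a misprint: the structure $[(\cdot)p-\bar s(\cdot)]Q_s^{-1}(p)$ is what the computation produces, as in \eqref{D-step 3}.

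\emph{Step 1.} Take $B=T$ in Theorem \ref{Bres} and multiply on the right by $\mathcal{Q}_{c,p}^{-2}(T)$. By Lemma \ref{Lemmastep0} with $m=2$, this factor can be moved to the left of $Q_s^{-1}(p)$. Since $S_L^{-1}(p,T)\mathcal{Q}_{c,p}^{-2}(T)=\tfrac1{64}F_L(p,T)$, the left-hand side becomes $\tfrac1{64}S_R^{-1}(s,T)TF_L(p,T)$.

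Next take $B=\mathcal{I}$ and multiply on the right by $\mathcal{Q}_{c,p}^{-2}(T)p$. Here I use that $\mathcal{Q}_{c,p}^{-2}(T)$ is intrinsic in $p$, so it commutes with $p$ and with $Q_s^{-1}(p)$. The left-hand side becomes $\tfrac1{64}S_R^{-1}(s,T)F_L(p,T)p$.

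Subtract the first identity from the second. On the left, \eqref{rf1} gives $\tfrac1{64}S_R^{-1}(s,T)\big(F_L(p,T)p-TF_L(p,T)\big)=S_R^{-1}(s,T)\mathcal{Q}_{c,p}^{-2}(T)$. On the right, the terms $-\tfrac1{64}\big(F_L(p,T)p-TF_L(p,T)\big)$ collapse by \eqref{rf1} to $-\mathcal{Q}_{c,p}^{-2}(T)$. This yields
\[
S_R^{-1}(s,T)\mathcal{Q}_{c,p}^{-2}(T)=[(X)p-\bar s(X)]Q_s^{-1}(p),
\]
where
\[
X:=S_R^{-1}(s,T)\mathcal{Q}_{c,p}^{-2}(T)p-S_R^{-1}(s,T)T\mathcal{Q}_{c,p}^{-2}(T)-\mathcal{Q}_{c,p}^{-2}(T).
\]

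\emph{Step 2.} Symmetrically, multiply Theorem \ref{Bres} on the left, by $\mathcal{Q}_{c,s}^{-2}(T)$ in the case $B=T$ and by $s\mathcal{Q}_{c,s}^{-2}(T)$ in the case $B=\mathcal{I}$. Use $\mathcal{Q}_{c,s}^{-2}(T)S_R^{-1}(s,T)=\tfrac1{64}F_R(s,T)$ and \eqref{rf2}. Left multiplication does not interfere with the right factors $p$, $\bar s$ and $Q_s^{-1}(p)$, so this step is easier. It gives
\[
\mathcal{Q}_{c,s}^{-2}(T)S_L^{-1}(p,T)=[(Y)p-\bar s(Y)]Q_s^{-1}(p),
\]
where
\[
Y:=\mathcal{Q}_{c,s}^{-2}(T)-s\mathcal{Q}_{c,s}^{-2}(T)S_L^{-1}(p,T)+\mathcal{Q}_{c,s}^{-2}(T)TS_L^{-1}(p,T).
\]

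\emph{Step 3.} Add the two identities. In $X+Y$, separate the operator-only part $\mathcal{Q}_{c,s}^{-2}(T)-\mathcal{Q}_{c,p}^{-2}(T)$ from the remaining four terms. These two groups give exactly the two brackets in \eqref{c3}.

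The main obstacle is bookkeeping of the noncommutative order of $p$, $\bar s$ and the operator factors in Step 1. Since $p\in\mathbb{C}_I$ does not commute with $T$, each move of $\mathcal{Q}_{c,p}^{-2}(T)$ past $p$ or $Q_s^{-1}(p)$ must be justified by its intrinsic slice hyperholomorphy in $p$ (Lemma \ref{Lemmastep0}). One must also check that \eqref{rf1} is applied exactly in the form $F_L(p,T)p-TF_L(p,T)$.
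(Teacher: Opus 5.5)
Your proposal is correct and follows essentially the same route as the paper. You right-multiply the $B=T$ and $B=\mathcal{I}$ cases of Theorem \ref{Bres} by $\mathcal{Q}_{c,p}^{-2}(T)$ and $\mathcal{Q}_{c,p}^{-2}(T)p$ (the paper uses $\mp 64$ times these and adds, which is the same thing), do the symmetric left multiplication, collapse via the $F$-resolvent relations, and sum. You are also right that the trailing $p$ in the last bracket of \eqref{c3} is a misprint, and your pairing of \eqref{rf1} with the right-multiplied identities and \eqref{rf2} with the left-multiplied ones is the correct one (the paper cites them the other way around).
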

	\begin{proof}	
		We first consider $B = T$ in \eqref{Breseq} and multiply the equation on the right by $-64\mathcal{Q}^{-2}_{c,p}(T)$. By Lemma \ref{Lemmastep0} we obtain
		\begin{align}\label{s1}
			\begin{split}
				&-S^{-1}_R(s,T)TF_L(p,T)=64 [(-S_R^{-1}(s,T)T\mathcal{Q}^{-2}_{c,p}(T)+TS_L^{-1}(p,T)\mathcal{Q}^{-2}_{c,p}(T))p\\
				&-\overline{s}(-S_R^{-1}(s,T)T\mathcal{Q}^{-2}_{c,p}(T)
				+TS_L^{-1}(p,T)\mathcal{Q}^{-2}_{c,p}(T))]Q_s^{-1}(p).
			\end{split}
		\end{align}
		We consider the case $B = \mathcal{I}$ in \eqref{Bres} and multiply the equation on the right by $64\mathcal{Q}^{-2}_{c,p}(T)p$. By Lemma \ref{Lemmastep0}, we then obtain
		\begin{align}\label{s2}
			\begin{split}
				&S^{-1}_R(s,T)F_L(p,T)p=64 [(S_R^{-1}(s,T)\mathcal{Q}^{-2}_{c,p}(T)p-S_L^{-1}(p,T)\mathcal{Q}^{-2}_{c,p}(T)p)p\\
				&-\overline{s}(S_R^{-1}(s,T)\mathcal{Q}^{-2}_{c,p}(T)p
				-S_L^{-1}(p,T)\mathcal{Q}^{-2}_{c,p}(T)p)]Q_s^{-1}(p).
			\end{split}
		\end{align}
		We sum \eqref{s1} and \eqref{s2}, and by using \eqref{rf2} we get
		\begin{align}
			\nonumber
			&S^{-1}_R(s,T)\mathcal{Q}^{-2}_{c,p}(T)=[(S_R^{-1}(s,T)\mathcal{Q}^{-2}_{c,p}(T)p-S_R^{-1}(s,T)T \mathcal{Q}_{c,p}^{-2}(T)- \mathcal{Q}_{c,p}^{-2}(T))p\\
			\label{c1}
			&-\overline{s}(S_R^{-1}(s,T)\mathcal{Q}^{-2}_{c,p}(T)p
			-S_R^{-1}(s,T)T\mathcal{Q}_{c,p}^{-2}(T)-\mathcal{Q}_{c,s}^{-2}(T))]Q_s^{-1}(p).
		\end{align}
		We now perform analogous computations on the left-hand side. More precisely, we first consider $B = T$ in \eqref{Breseq} and multiply the resulting equation on the left by $-64\mathcal{Q}^{-2}_{c,s}(T)$. Next, we take $B = \mathcal{I}$ and multiply the equation on the left by $64 s \mathcal{Q}^{-2}_{c,s}(T)$. Summing the two resulting expressions and using \eqref{rf1}, we obtain
		\begin{align}
			\nonumber
			& \mathcal{Q}_{c,s}^{-2}(T)S_L^{-1}(p,T) =  [(\mathcal{Q}_{c,s}^{-2}(T)+\mathcal{Q}_{c,s}^{-2}(T)TS_L^{-1}(p,T)-s \mathcal{Q}_{c,s}^{-2}(T)S_L^{-1}(p,T))p\\
			\label{c2}
			&-\bar{s} (\mathcal{Q}_{c,s}^{-2}(T)+\mathcal{Q}_{c,s}^{-2}(T)TS_L^{-1}(p,T)-s \mathcal{Q}_{c,s}^{-2}(T)S_L^{-1}(p,T))] Q_s^{-1}(p).
		\end{align}
		
		Summing \eqref{c1} and \eqref{c2} we obtain \eqref{c3}.
	\end{proof}
	\begin{lemma}
		Let $T\in \mathcal{BC}^{0,1}(V_5)$.	For $p$, $s\in\rho_S(T)$ with $s\not\in[p]$, we have the following equation:
		\begin{align}
			\label{c10}
			&S_R^{-1}(s,T) \mathcal{Q}_{c,p}^{-2}(T)+\mathcal{Q}_{c,s}^{-2}(T)S_L^{-1}(p,T)-2 \mathcal{Q}_{c,s}^{-1}(T) S_R^{-1}(s,T) T S_L^{-1}(p,T) \mathcal{Q}_{c,p}^{-1}(T)\\
			\nonumber
			& +\mathcal{Q}_{c,s}^{-1}(T)S^{-1}_R(s,T)S_{L}^{-1}(p,T) \mathcal{Q}_{c,p}^{-1}(T)p+s\mathcal{Q}_{c,s}^{-1}(T)S^{-1}_R(s,T)S_{L}^{-1}(p,T) \mathcal{Q}_{c,p}^{-1}(T)\\
			\nonumber
			&-2\mathcal{Q}_{c,s}^{-2}(T)\underline{T} \mathcal{Q}_{c,p}^{-1}(T)-2 \mathcal{Q}_{c,s}^{-1}(T)\underline{T} \mathcal{Q}_{c,p}^{-2 }(T)\\
			\nonumber
			&=[(\mathcal{Q}_{c,s}^{-2}(T)-\mathcal{Q}_{c,p}^{-2}(T))p-\bar{s}(\mathcal{Q}_{c,s}^{-2}(T)-\mathcal{Q}_{c,p}^{-2}(T))] Q_s^{-1}(p).
		\end{align}
		where $\underline{T}= T_1 e_1+T_2 e_2+T_3 e_3+T_4 e_4+T_5 e_5$.
	\end{lemma}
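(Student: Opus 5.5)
The plan follows the strategy of Lemma \ref{D-midstep} and Theorem \ref{D-resolvent eq.}. Starting from \eqref{c3}, I move to the left-hand side everything in the second bracket, i.e. the terms
$$
X:=\mathcal{Q}_{c,s}^{-2}(T)T S_{L}^{-1}(p,T)-s \mathcal{Q}_{c,s}^{-2}(T) S_{L}^{-1}(p,T)+S_R^{-1}(s,T) \mathcal{Q}_{c,p}^{-2}(T)p-S_R^{-1}(s,T) T \mathcal{Q}_{c,p}^{-2}(T),
$$
which appear as $[Xp-\bar s X]Q_s^{-1}(p)$. The goal is to show that this expression equals the sum of the five extra terms in \eqref{c10} with opposite signs. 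Then \eqref{c10} follows by subtraction, and the first bracket of \eqref{c3} is exactly the right-hand side of \eqref{c10}.

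To identify $[Xp-\bar sX]Q_s^{-1}(p)$, I first rewrite $X$ in the form $\mathcal{Q}_{c,s}^{-k}(T)\,Y\,\mathcal{Q}_{c,p}^{-j}(T)$. All components of $T$ commute, and so do $\overline{T}$, $s$, $p$ with $T$. I insert $S_L^{-1}(p,T)=(p-\overline T)\mathcal{Q}_{c,p}^{-1}(T)$ and $S_R^{-1}(s,T)=\mathcal{Q}_{c,s}^{-1}(T)(s-\overline T)$, and write $\mathcal{Q}_{c,s}^{-2}=\mathcal{Q}_{c,s}^{-1}\mathcal{Q}_{c,s}^{-1}$. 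This gives
$$
X=\mathcal{Q}_{c,s}^{-2}(T)\big[(T-s)(p-\overline T)\mathcal{Q}_{c,p}(T)+\mathcal{Q}_{c,s}(T)(s-\overline T)(p-T)\big]\mathcal{Q}_{c,p}^{-2}(T),
$$
where I have tracked the order of $s$ and $p$ (on the left and on the right, respectively). Since the operators commute with $s$ and $p$ only as Clifford-valued objects up to order, I keep $s$ always on the left and $p$ always on the right.

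Next I compute $Xp-\bar sX$. I expand $\mathcal{Q}_{c,s}(T)=(s-T)(s-\overline T)$ and $\mathcal{Q}_{c,p}(T)=(p-T)(p-\overline T)$, being careful that these factorizations hold with scalar-like placement. I then use the identity $\,(\cdot)p^2-(s+\bar s)(\cdot)p+|s|^2(\cdot)=(\cdot)Q_s(p)$ exactly as in the proof of Lemma \ref{D-midstep}. The aim is to group the result into pieces of the form $\mathcal{Q}_{c,s}^{-a}(\ldots)\mathcal{Q}_{c,p}^{-b}\,Q_s(p)$, and then to recognize them, after multiplication by $Q_s^{-1}(p)$ (which commutes by Lemma \ref{Lemmastep0}), as the terms
\begin{gather*}
\mathcal{Q}_{c,s}^{-1}S_R^{-1}(s,T)TS_L^{-1}(p,T)\mathcal{Q}_{c,p}^{-1}, \qquad \mathcal{Q}_{c,s}^{-1}S_R^{-1}S_L^{-1}\mathcal{Q}_{c,p}^{-1}p, \qquad s\mathcal{Q}_{c,s}^{-1}S_R^{-1}S_L^{-1}\mathcal{Q}_{c,p}^{-1},\\
\mathcal{Q}_{c,s}^{-2}\underline T\mathcal{Q}_{c,p}^{-1}, \qquad \mathcal{Q}_{c,s}^{-1}\underline T\mathcal{Q}_{c,p}^{-2}.
\end{gather*}
The $\underline T$ terms should arise from $T-\overline T=2\underline T$, as in Theorem \ref{D-resolvent eq.}. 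A cleaner route, which I would try first, is to split $X=X_1+X_2$ with
$$
X_1=\mathcal{Q}_{c,s}^{-1}\big[\mathcal{Q}_{c,s}^{-1}TS_L^{-1}(p,T)-S_R^{-1}(s,T)T\mathcal{Q}_{c,p}^{-1}\big]\mathcal{Q}_{c,p}^{-1}\cdot(\text{correction}),
$$
and to apply \eqref{D-step 8} and \eqref{D-step 81} sandwiched between extra factors $\mathcal{Q}_{c,s}^{-1}$ on the left and $\mathcal{Q}_{c,p}^{-1}$ on the right. These factors pass through $Q_s^{-1}(p)$ by Lemma \ref{Lemmastep0}, and through the $p$, $\bar s$ multiplications because they sit on the correct side.

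The main obstacle is the bookkeeping in this last step. I need to check that the cross terms produced when $\mathcal{Q}_{c,s}^{-2}$ is split against $\mathcal{Q}_{c,p}^{-2}$ combine into exactly the three $S_R^{-1}S_L^{-1}$-type terms with the stated coefficients ($-2$, $+1$, $+1$), and not into something else. I would verify this by reducing both sides to a single fraction $\mathcal{Q}_{c,s}^{-2}(T)\,[\text{polynomial in }s,T,\overline T,p]\,\mathcal{Q}_{c,p}^{-2}(T)\,Q_s^{-1}(p)$ and comparing the polynomials directly.
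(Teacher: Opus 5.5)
Your reduction is sound. Read the second bracket of \eqref{c3} as $[Xp-\bar sX]Q_s^{-1}(p)$; the trailing $p$ there is a typo. Then the lemma is equivalent to $[Xp-\bar sX]Q_s^{-1}(p)=-E$, where $E$ is the sum of the five extra terms in \eqref{c10}, and your sandwich formula for $X$ is correct.

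There is a genuine gap, though. The identity $[Xp-\bar sX]Q_s^{-1}(p)=-E$ is the whole content of the lemma, and you do not prove it. Moreover, the rules you announce for proving it are false. Your claim that $s,p$ commute with $T$ fails: for $s=s_0+Is_1$ one has $sT-Ts=s_1(I\underline{T}-\underline{T}I)\neq 0$ in general. Hence $(s-T)(s-\overline{T})=\mathcal{Q}_{c,s}(T)+(sT-Ts)\neq\mathcal{Q}_{c,s}(T)$, and likewise $(p-T)(p-\overline{T})\neq\mathcal{Q}_{c,p}(T)$.

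What you may use is this: $T+\overline{T}=2T_0$ and $T\overline{T}$ commute with everything, $\mathcal{Q}_{c,s}(T)$ commutes with $s$ and $\bar s$, and $\mathcal{Q}_{c,p}(T)$ commutes with $p$ and $Q_s(p)$. A computation built on the factorization only checks \eqref{c10} in a commutative model, for instance when $T$ and $s,p$ lie in one $\mathbb{C}_I$.

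Your ``cleaner route'' is also incomplete. The ``(correction)'' factor is undefined. Sandwiching \eqref{D-step 8} and \eqref{D-step 81} on both sides yields brackets with one $\mathcal{Q}$-factor too many compared with the terms of $X$. One-sided multiplications do produce the $\underline{T}$-terms, but they leave residual brackets $[Yp-\bar sY]Q_s^{-1}(p)$, for example with $Y=\mathcal{Q}_{c,s}^{-1}(T)\big(TS_L^{-1}(p,T)-S_R^{-1}(s,T)T\big)\mathcal{Q}_{c,p}^{-1}(T)$. Turning these into the three $S_R^{-1}S_L^{-1}$-terms requires Theorem \ref{Bres} with $B=-2T$ and $B=\mathcal{I}$, suitably sandwiched. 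That is exactly what the paper does in \eqref{c4} and \eqref{N1}, before collapsing $\mathcal{A}_1$ and $\mathcal{A}_2+\mathcal{B}$ into $\mathcal{Q}_{c,s}^{-2}(T)(T-\overline{T})\mathcal{Q}_{c,p}^{-1}(T)+\mathcal{Q}_{c,s}^{-1}(T)(T-\overline{T})\mathcal{Q}_{c,p}^{-2}(T)$. Your proposal never invokes this step.

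Your direct route can be repaired without Theorem \ref{Bres}. Put $\mathcal{L}(Y):=Yp-\bar sY$ and $\mathcal{M}(Y):=Yp-sY$. The computation in Lemma \ref{D-midstep} gives $\mathcal{L}(\mathcal{M}(Y))=YQ_s(p)$. So it suffices to show $X=-\mathcal{M}(E)$, that is $X+Ep-sE=0$, and no factoring of $Q_s(p)$ is needed.

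Use three identities, each needing only that $T+\overline{T}$ and $T\overline{T}$ are central:
\[
(p-\overline{T})p-T(p-\overline{T})=\mathcal{Q}_{c,p}(T),\qquad s(s-\overline{T})-(s-\overline{T})T=\mathcal{Q}_{c,s}(T),\qquad \overline{T}+2\underline{T}=T.
\]
With these, the five terms collapse to $E=\mathcal{Q}_{c,s}^{-2}(T)\big[(s-T)\mathcal{Q}_{c,p}(T)+\mathcal{Q}_{c,s}(T)(p-T)\big]\mathcal{Q}_{c,p}^{-2}(T)$. Combining this with your expression for $X$, and keeping $s$ on the left and $p$ on the right throughout, gives
\[
X+Ep-sE=\mathcal{Q}_{c,s}^{-2}(T)\big[-\mathcal{Q}_{c,s}(T)\mathcal{Q}_{c,p}(T)+\mathcal{Q}_{c,s}(T)\mathcal{Q}_{c,p}(T)\big]\mathcal{Q}_{c,p}^{-2}(T)=0 .
\]
That would be a shorter proof than the paper's, but your proposal stops exactly where this computation has to be done.
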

	\begin{proof} Let $B=-2T$ in equation \eqref{Breseq}, multiply on the left by $ \mathcal{Q}_{c,s}^{-1}(T)$ and on the right by $\mathcal{Q}_{c,p}^{-1}(T)$. By Lemma \ref{Lemmastep0} we get
		\begin{eqnarray}
			\nonumber
			&&- \mathcal{Q}_{c,s}^{-1}(T) S_R^{-1}(s,T) 2T S_L^{-1}(p,T) \mathcal{Q}_{c,p}^{-1}(T)\\
			\nonumber
			&=&[(-\mathcal{Q}_{c,s}^{-1}(T)S_{R}^{-1}(s,T)2T \mathcal{Q}_{c,p}^{-1}(T)+ \mathcal{Q}_{c,s}^{-1}(T)2T S_{L}^{-1}(p,T) \mathcal{Q}_{c,p}^{-1}(T))p\\
			\label{c4}
			&&-\bar{s}(-\mathcal{Q}_{c,s}^{-1}(T)S_{R}^{-1}(s,T)2T \mathcal{Q}_{c,p}^{-1}(T)+ \mathcal{Q}_{c,s}^{-1}(T)2T S_{L}^{-1}(p,T) \mathcal{Q}_{c,p}^{-1}(T))] Q_s^{-1}(p).
		\end{eqnarray}
		Let $B = \mathcal{I}$ in equation \eqref{Breseq}, multiply it on the left by $\mathcal{Q}_{c,s}^{-1}(T)$ and on the right by $\mathcal{Q}_{c,p}^{-1}(T)p$.
		Next, take $B = \mathcal{I}$ in \eqref{Breseq}, multiply it on the left by $s \, \mathcal{Q}_{c,s}^{-1}(T)$ and on the right by $\mathcal{Q}_{c,p}^{-1}(T)$, and use Lemma \ref{Lemmastep0}.
		Adding the expression gives
		\begin{equation}
			\label{N1}
			\mathcal{Q}_{c,s}^{-1}(T)S^{-1}_R(s,T)S_{L}^{-1}(p,T) \mathcal{Q}_{c,p}^{-1}(T)p+s\mathcal{Q}_{c,s}^{-1}(T)S^{-1}_R(s,T)S_{L}^{-1}(p,T) \mathcal{Q}_{c,p}^{-1}(T)= \mathcal{B}(s,p,T),
		\end{equation}
		where
		\begin{eqnarray*}
			\mathcal{B}(s,p,T)&:=&[(\mathcal{Q}_{c,s}^{-1}(T)S^{-1}_R(s,T)\mathcal{Q}_{c,p}^{-1}(T)p- \mathcal{Q}_{c,s}^{-1}(T)S_L^{-1}(p,T) \mathcal{Q}_{c,p}^{-1}(T)p\\
			&&+s\mathcal{Q}_{c,s}^{-1}(T)S^{-1}_R(s,T)\mathcal{Q}_{c,p}^{-1}(T)- s\mathcal{Q}_{c,s}^{-1}(T)S_L^{-1}(p,T) \mathcal{Q}_{c,p}^{-1}(T))p\\
			&&- \bar{s}(\mathcal{Q}_{c,s}^{-1}(T)S^{-1}_R(s,T)\mathcal{Q}_{c,p}^{-1}(T)p- \mathcal{Q}_{c,s}^{-1}(T)S_L^{-1}(p,T) \mathcal{Q}_{c,p}^{-1}(T)p\\
			&&+s\mathcal{Q}_{c,s}^{-1}(T)S^{-1}_R(s,T)\mathcal{Q}_{c,p}^{-1}(T)- s\mathcal{Q}_{c,s}^{-1}(T)S_L^{-1}(p,T) \mathcal{Q}_{c,p}^{-1}(T))] Q_s^{-1}(p).
		\end{eqnarray*}

		We sum \eqref{c3}, \eqref{c4} and \eqref{N1} and we get
		\begin{align}
			\nonumber
			&S_R^{-1}(s,T) \mathcal{Q}_{c,p}^{-2}(T)+\mathcal{Q}_{c,s}^{-2}(T)S_L^{-1}(p,T)- 2\mathcal{Q}_{c,s}^{-1}(T) S_R^{-1}(s,T) T S_L^{-1}(p,T) \mathcal{Q}_{c,p}^{-1}(T)\\
			\nonumber
			&+\mathcal{Q}_{c,s}^{-1}(T)S^{-1}_R(s,T)S_{L}^{-1}(p,T) \mathcal{Q}_{c,p}^{-1}(T)p+s\mathcal{Q}_{c,s}^{-1}(T)S^{-1}_R(s,T)S_{L}^{-1}(p,T) \mathcal{Q}_{c,p}^{-1}(T)\\
			\label{N2}
			&=[(\mathcal{Q}_{c,s}^{-2}(T)-\mathcal{Q}_{c,p}^{-2}(T))p-\bar{s}(\mathcal{Q}_{c,s}^{-2}(T)-\mathcal{Q}_{c,p}^{-2}(T))] Q_s^{-1}(p)+ \mathcal{A}_1(s,p,T)+ \mathcal{A}_2(s,p,T)+ \mathcal{B}(s,p,T),
		\end{align}
		where
		\begin{align*}
			\mathcal{A}_1(s,p,T)&=[(\mathcal{Q}_{c,s}^{-2}(T)T S_{L}^{-1}(p,T)-S_R^{-1}(s,T) T \mathcal{Q}_{c,p}^{-2}(T)\\
			&-\mathcal{Q}_{c,s}^{-1}(T)S_{R}^{-1}(s,T)T \mathcal{Q}_{c,p}^{-1}(T)+ \mathcal{Q}_{c,s}^{-1}(T)T S_{L}^{-1}(p,T) \mathcal{Q}_{c,p}^{-1}(T))p\\
			&-\bar{s}(\mathcal{Q}_{c,s}^{-2}(T)T S_{L}^{-1}(p,T)-S_R^{-1}(s,T) T \mathcal{Q}_{c,p}^{-2}(T)\\
			&-\mathcal{Q}_{c,s}^{-1}(T)S_{R}^{-1}(s,T)T \mathcal{Q}_{c,p}^{-1}(T)+ \mathcal{Q}_{c,s}^{-1}(T)T S_{L}^{-1}(p,T) \mathcal{Q}_{c,p}^{-1}(T)]Q_s^{-1}(p)
		\end{align*}
		and
		\begin{align*}
			\mathcal{A}_2(s,p,T)&=[(-s \mathcal{Q}_{c,s}^{-2}(T) S_{L}^{-1}(p,T)+S_R^{-1}(s,T) \mathcal{Q}_{c,p}^{-2}(T)p\\
			&-\mathcal{Q}_{c,s}^{-1}(T)S_{R}^{-1}(s,T)T \mathcal{Q}_{c,p}^{-1}(T)+ \mathcal{Q}_{c,s}^{-1}(T)T S_{L}^{-1}(p,T) \mathcal{Q}_{c,p}^{-1}(T))p\\
			&-\bar{s}[(-s \mathcal{Q}_{c,s}^{-2}(T) S_{L}^{-1}(p,T)+S_R^{-1}(s,T) \mathcal{Q}_{c,p}^{-2}(T)p\\
			&-\mathcal{Q}_{c,s}^{-1}(T)S_{R}^{-1}(s,T)T \mathcal{Q}_{c,p}^{-1}(T)+ \mathcal{Q}_{c,s}^{-1}(T)T S_{L}^{-1}(p,T) \mathcal{Q}_{c,p}^{-1}(T)]Q_s^{-1}(p).
		\end{align*}
		Now we simplify $\mathcal{A}_1(s,p,T)$, $\mathcal{A}_2(s,p,T)$ and $ \mathcal{B}(s,p,T)$. The definition of the $S$-resolvent operator implies
		\begin{eqnarray*}
			&&\mathcal{Q}_{c,s}^{-2}(T)T S_{L}^{-1}(p,T)-S_R^{-1}(s,T) T \mathcal{Q}_{c,p}^{-2}(T)-\mathcal{Q}_{c,s}^{-1}(T)S_{R}^{-1}(s,T)T \mathcal{Q}_{c,p}^{-1}(T)+ \mathcal{Q}_{c,s}^{-1}(T)T S_{L}^{-1}(p,T) \mathcal{Q}_{c,p}^{-1}(T)\\
			&=& \mathcal{Q}_{c,s}^{-2}(T)T(p\mathcal{I}-\overline{T}) \mathcal{Q}_{c,p}^{-1}(T)-\mathcal{Q}_{c,s}^{-1}(T) (s\mathcal{I}-\overline{T})T \mathcal{Q}_{c,p}^{-2}(T)-\mathcal{Q}_{c,s}^{-2}(T)(s\mathcal{I}-\overline{T})T \mathcal{Q}_{c,p}^{-1}(T)\\
			&&+\mathcal{Q}_{c,s}^{-1}(T) T(p\mathcal{I}-\overline{T}) \mathcal{Q}_{c,p}^{-2}(T)\\
			&=& \mathcal{Q}_{c,s}^{-2}(T)Tp \mathcal{Q}_{c,p}^{-1}(T)-\mathcal{Q}_{c,s}^{-1}(T)sT \mathcal{Q}_{c,p}^{-2}(T)- \mathcal{Q}_{c,s}^{-2}(T)sT \mathcal{Q}_{c,p}^{-1}(T)+ \mathcal{Q}_{c,s}^{-1}(T)Tp \mathcal{Q}_{c,p}^{-2}(T).
		\end{eqnarray*}
		Therefore, we can write
		\begin{align}
			\nonumber
			\mathcal{A}_1&(s,p,T)=[(\mathcal{Q}_{c,s}^{-2}(T)Tp \mathcal{Q}_{c,p}^{-1}(T)-\mathcal{Q}_{c,s}^{-1}(T)sT \mathcal{Q}_{c,p}^{-2}(T)- \mathcal{Q}_{c,s}^{-2}(T)sT \mathcal{Q}_{c,p}^{-1}(T)+ \mathcal{Q}_{c,s}^{-1}(T)Tp \mathcal{Q}_{c,p}^{-2}(T))p\\
			\nonumber
			&-\bar{s}(\mathcal{Q}_{c,s}^{-2}(T)Tp \mathcal{Q}_{c,p}^{-1}(T)-\mathcal{Q}_{c,s}^{-1}(T)sT \mathcal{Q}_{c,p}^{-2}(T)- \mathcal{Q}_{c,s}^{-2}(T)sT \mathcal{Q}_{c,p}^{-1}(T)+ \mathcal{Q}_{c,s}^{-1}(T)Tp \mathcal{Q}_{c,p}^{-2}(T))]Q_s^{-1}(p)\\
			\nonumber
			&=[\mathcal{Q}_{c,s}^{-2}(T)Tp^2 \mathcal{Q}_{c,p}^{-1}(T)-\mathcal{Q}_{c,s}^{-1}(T)sTp \mathcal{Q}_{c,p}^{-2}(T)- \mathcal{Q}_{c,s}^{-2}(T)sTp \mathcal{Q}_{c,p}^{-1}(T)+ \mathcal{Q}_{c,s}^{-1}(T)Tp^2 \mathcal{Q}_{c,p}^{-2}(T)+\\
			\nonumber
			&-\mathcal{Q}_{c,s}^{-2}(T)\bar{s}Tp \mathcal{Q}_{c,p}^{-1}(T)+\mathcal{Q}_{c,s}^{-1}(T)|s|^2T \mathcal{Q}_{c,p}^{-2}(T)+ \mathcal{Q}_{c,s}^{-2}(T)|s|^2 T \mathcal{Q}_{c,p}^{-1}(T)- \mathcal{Q}_{c,s}^{-1}(T)\bar{s}Tp \mathcal{Q}_{c,p}^{-2}(T)]Q_s^{-1}(p)\\
			\nonumber
			&=[ \mathcal{Q}_{c,s}^{-2}(T)T (p^2-2s_0p+|s|^2) \mathcal{Q}_{c,p}^{-1}(T)+ \mathcal{Q}_{c,s}^{-1}(T)T (p^2-2s_0p+|s|^2) \mathcal{Q}_{c,p}^{-2}(T)] Q_s^{-1}(p)\\
			\label{c8}
			&= \mathcal{Q}_{c,s}^{-2}(T)T \mathcal{Q}_{c,p}^{-1}(T)+ \mathcal{Q}_{c,s}^{-1}(T)T \mathcal{Q}_{c,p}^{-2 }(T).
		\end{align}

		We now focus on the right-hand side of $\mathcal{A}_2(s,p,T)$ and $\mathcal{B}(s,p,T)$.
		Thus, we have
		\begin{align*}
			&S^{-1}_R(s,T)p \mathcal{Q}_{c,p}^{-2}(T)-s \mathcal{Q}_{c,s}^{-2}(T) S^{-1}_L(p,T)+\mathcal{Q}_{c,s}^{-1}(T)S^{-1}_R(s,T)\mathcal{Q}_{c,p}^{-1}(T)p\\
			&- \mathcal{Q}_{c,s}^{-1}(T)S_L^{-1}(p,T) \mathcal{Q}_{c,p}^{-1}(T)p+s\mathcal{Q}_{c,s}^{-1}(T)S^{-1}_R(s,T)\mathcal{Q}_{c,p}^{-1}(T)- s\mathcal{Q}_{c,s}^{-1}(T)S_L^{-1}(p,T) \mathcal{Q}_{c,p}^{-1}(T)\\
			&- \mathcal{Q}_{c,s}^{-1}(T)S^{-1}_R(s,T)T \mathcal{Q}_{c,p}^{-1}(T)+\mathcal{Q}_{c,s}^{-1}(T)T S^{-1}_L(p,T) \mathcal{Q}_{c,p}^{-1}(T)\\
			=& S^{-1}_R(s,T)p \mathcal{Q}_{c,p}^{-2}(T)-s \mathcal{Q}_{c,s}^{-2}(T) S^{-1}_L(p,T)+\mathcal{Q}_{c,s}^{-1}(T)S^{-1}_R(s,T)\mathcal{Q}_{c,p}^{-1}(T)p\\
			&- s\mathcal{Q}_{c,s}^{-1}(T)S_L^{-1}(p,T) \mathcal{Q}_{c,p}^{-1}(T) +\mathcal{Q}_{c,s}^{-1}(T)[s S^{-1}_R(s,T)-S_{R}^{-1}(s,T)T] \mathcal{Q}_{c,p}^{-1}(T)\\
			&-\mathcal{Q}_{c,s}^{-1}(T)[ S^{-1}_L(p,T)p-TS_{L}^{-1}(p,T)] \mathcal{Q}_{c,p}^{-1}(T).
		\end{align*}
		By using \eqref{sl} and \eqref{sr}, together with the definition of the $S$-resolvent operators, we obtain
		\begin{align*}
			&S^{-1}_R(s,T)p \mathcal{Q}_{c,p}^{-2}(T)-s \mathcal{Q}_{c,s}^{-2}(T) S^{-1}_L(p,T)+\mathcal{Q}_{c,s}^{-1}(T)S^{-1}_R(s,T)\mathcal{Q}_{c,p}^{-1}(T)p- s\mathcal{Q}_{c,s}^{-1}(T)S_L^{-1}(p,T) \mathcal{Q}_{c,p}^{-1}(T)\\
			=& \mathcal{Q}_{c,s}^{-1}(T)(s\mathcal{I}-\overline{T})p \mathcal{Q}_{c,p}^{-2}(T)- \mathcal{Q}_{c,s}^{-2}(T)s (p\mathcal{I}-\overline{T}) \mathcal{Q}_{c,p}^{-1}(T)\\
			&+\mathcal{Q}_{c,s}^{-2}(T)(s\mathcal{I}-\overline{T})p \mathcal{Q}_{c,p}^{-1}(T)- \mathcal{Q}_{c,s}^{-1}(T)s (p\mathcal{I}-\overline{T}) \mathcal{Q}_{c,p}^{-2}(T)\\
			=&-\mathcal{Q}_{c,s}^{-1}(T) \overline{T}p \mathcal{Q}_{c,p}^{-2}(T)+ \mathcal{Q}_{c,s}^{-2}(T)s \overline{T} \mathcal{Q}_{c,p}^{-1}(T)-\mathcal{Q}_{c,s}^{-2}(T) \overline{T}p \mathcal{Q}_{c,p}^{-1}(T)+ \mathcal{Q}_{c,s}^{-1}(T)s \overline{T} \mathcal{Q}_{c,p}^{-2}(T).
		\end{align*}
		Therefore we have
		\begin{align}
			\mathcal{A}_2(s,p,T)&+\mathcal{B}(s,p,T)=\nonumber[(-\mathcal{Q}_{c,s}^{-1}(T) \overline{T}p \mathcal{Q}_{c,p}^{-2}(T)+ \mathcal{Q}_{c,s}^{-2}(T)s \overline{T} \mathcal{Q}_{c,p}^{-1}(T)\\
			\nonumber
			&-\mathcal{Q}_{c,s}^{-2}(T) \overline{T}p \mathcal{Q}_{c,p}^{-1}(T)+ \mathcal{Q}_{c,s}^{-1}(T)s \overline{T} \mathcal{Q}_{c,p}^{-2}(T))p\\
			\nonumber
			&-\bar{s}(-\mathcal{Q}_{c,s}^{-1}(T) \overline{T}p \mathcal{Q}_{c,p}^{-2}(T)+ \mathcal{Q}_{c,s}^{-2}(T)s \overline{T} \mathcal{Q}_{c,p}^{-1}(T)\\
			\nonumber
			&-\mathcal{Q}_{c,s}^{-2}(T) \overline{T}p \mathcal{Q}_{c,p}^{-1}(T)+ \mathcal{Q}_{c,s}^{-1}(T)s \overline{T} \mathcal{Q}_{c,p}^{-2}(T))] Q_s^{-1}(p)\\
			\nonumber
			&= [\mathcal{Q}_{c,s}^{-1}(T) \overline{T} (-p^2+2s_0p-|s|^2) \mathcal{Q}_{c,p}^{-2}(T)+\mathcal{Q}_{c,s}^{-2}(T) \overline{T} (-p^2+2s_0p-|s|^2) \mathcal{Q}_{c,p}^{-1}(T)] Q_s^{-1}(p)\\
			\label{c7}
			&=- [\mathcal{Q}_{c,s}^{-1}(T) \overline{T}  \mathcal{Q}_{c,p}^{-2}(T)+\mathcal{Q}_{c,s}^{-2}(T) \overline{T}  \mathcal{Q}_{c,p}^{-1}(T)].
		\end{align}
		Plugging \eqref{c8} and \eqref{c7} into \eqref{N2} and we get \eqref{c10}.
	\end{proof}

	\begin{theorem}[Resolvent equation for the harmonic functional calculus]\label{DDelta-resolventequation}
		Let $T \in \mathcal{BC}^{0,1}(V_5)$. For $p$, $s\in\rho_S(T)$ with $s\not\in[p]$, we have the following:
		\begin{align}\label{DDelta-resolvent eq.}
			\begin{split}
				&S_R^{-1}(s,\overline{T}) S_{D\Delta,L}^{-1}(p,T) + S_{D\Delta,R}^{-1}(s,T)  S_L^{-1}(p,\overline{T}) + \frac{1}{2}S_{D,R}^{-1}(s,T) S_{\Delta,L}^{-1}(p,T) +  \frac{1}{2}S_{\Delta, R}^{-1}(s,T)  S_{D, L}^{-1}(p,T)\\
				&= [(S^{-1}_{\Delta D,L}(s,T)-S^{-1}_{\Delta D,R}(p,T))p- \bar{s}(S^{-1}_{\Delta,L D}(s,T)-S^{-1}_{\Delta D,R}(p,T))]Q_s^{-1}(p),
			\end{split}
		\end{align}
		and
		\begin{align}\label{DDelta-resolvent eq1}
			\begin{split}
				&S_R^{-1}(s,T) S_{D\Delta,L}^{-1}(p,T) + S_{D\Delta,R}^{-1}(s,T) S_L^{-1}(p,T) + \frac{1}{2} S_{D,R}^{-1}(s,T) S_{\Delta,L}^{-1}(p,\overline{T}) + \frac{1}{2} S_{\Delta,R}^{-1}(s,\overline{T}) S_{D,L}^{-1}(p,T) \\
				&= \big[(S^{-1}_{\Delta D,L}(s,T) - S^{-1}_{\Delta D,R}(p,T))p - \bar{s} (S^{-1}_{\Delta D,L}(s,T) - S^{-1}_{\Delta D,R}(p,T)) \big] Q_s^{-1}(p).
			\end{split}
		\end{align}
	\end{theorem}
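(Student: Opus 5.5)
The plan is to obtain both identities from \eqref{c10} by rewriting its left-hand side in terms of the resolvents $S^{-1}_{D\Delta}$, $S^{-1}_{D}$ and $S^{-1}_{\Delta}$, and then multiplying by $16$. Since $S^{-1}_{\Delta D,L}(s,T)=16\mathcal{Q}_{c,s}^{-2}(T)$, the right-hand side of \eqref{c10} multiplied by $16$ is already the right-hand side of \eqref{DDelta-resolvent eq.}. All the work is therefore on the left-hand side, and it is purely algebraic. All operators involved are functions of the commuting components of $T$, so they commute with one another; the only care needed is with the paravector scalars $s,p$ and their positions.

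For the first identity, I group the terms of \eqref{c10} in two ways.

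\textbf{Terms with a single $\underline{T}$.} I pair $S^{-1}_R(s,T)\mathcal{Q}_{c,p}^{-2}(T)$ with $-2\mathcal{Q}_{c,s}^{-1}(T)\underline{T}\mathcal{Q}_{c,p}^{-2}(T)$. Exactly as in the proof of Theorem \ref{corol D-resolent eq.}, using $s-\overline T-2\underline T=s-T$, this pair equals
$$S^{-1}_R(s,\overline T)\mathcal{Q}_{c,p}^{-2}(T)=\tfrac1{16}S^{-1}_R(s,\overline T)S^{-1}_{D\Delta,L}(p,T).$$
Similarly, I pair $\mathcal{Q}_{c,s}^{-2}(T)S^{-1}_L(p,T)$ with $-2\mathcal{Q}_{c,s}^{-2}(T)\underline T\mathcal{Q}_{c,p}^{-1}(T)$. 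By the Remark after Theorem \ref{D prod. rule}, this gives $\mathcal{Q}_{c,s}^{-2}(T)S^{-1}_L(p,\overline T)$.

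\textbf{Mixed terms.} The three remaining terms are
$$\mathcal{Q}_{c,s}^{-1}(T)\big[-2S^{-1}_R(s,T)TS^{-1}_L(p,T)+S^{-1}_R(s,T)S^{-1}_L(p,T)p+sS^{-1}_R(s,T)S^{-1}_L(p,T)\big]\mathcal{Q}_{c,p}^{-1}(T).$$
I use \eqref{sl} to replace $S^{-1}_L(p,T)p$ by $\mathcal I+TS^{-1}_L(p,T)$, and \eqref{sr} to replace $sS^{-1}_R(s,T)$ by $\mathcal I+S^{-1}_R(s,T)T$. The $T$-terms then cancel against $-2S^{-1}_RTS^{-1}_L$, which leaves
$$\mathcal{Q}_{c,s}^{-1}(T)\big[S^{-1}_L(p,T)+S^{-1}_R(s,T)\big]\mathcal{Q}_{c,p}^{-1}(T)=\mathcal{Q}_{c,s}^{-1}(T)S^{-1}_L(p,T)\mathcal{Q}_{c,p}^{-1}(T)+\mathcal{Q}_{c,s}^{-1}(T)S^{-1}_R(s,T)\mathcal{Q}_{c,p}^{-1}(T).$$
By \eqref{H1} and \eqref{K-resol.op}, $S^{-1}_{D,R}(s,T)S^{-1}_{\Delta,L}(p,T)=32\,\mathcal{Q}_{c,s}^{-1}(T)S^{-1}_L(p,T)\mathcal{Q}_{c,p}^{-1}(T)$. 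Hence, after multiplying by $16$, the two mixed terms become $\frac12 S^{-1}_{D,R}(s,T)S^{-1}_{\Delta,L}(p,T)+\frac12 S^{-1}_{\Delta,R}(s,T)S^{-1}_{D,L}(p,T)$, which proves \eqref{DDelta-resolvent eq.}.

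For \eqref{DDelta-resolvent eq1}, I move the $\underline T$ terms the other way. I keep $S^{-1}_R(s,T)\mathcal{Q}_{c,p}^{-2}(T)$ and $\mathcal{Q}_{c,s}^{-2}(T)S^{-1}_L(p,T)$ as they are. I absorb $-2\mathcal{Q}_{c,s}^{-1}\underline T\mathcal{Q}_{c,p}^{-2}$ into the mixed term $\mathcal{Q}_{c,s}^{-1}S^{-1}_R(s,T)\mathcal{Q}_{c,p}^{-1}$, which turns it into $\mathcal{Q}_{c,s}^{-1}S^{-1}_R(s,\overline T)\mathcal{Q}_{c,p}^{-1}$. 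I absorb $-2\mathcal{Q}_{c,s}^{-2}\underline T\mathcal{Q}_{c,p}^{-1}$ into $\mathcal{Q}_{c,s}^{-1}S^{-1}_L(p,T)\mathcal{Q}_{c,p}^{-1}$, which gives $\mathcal{Q}_{c,s}^{-1}S^{-1}_L(p,\overline T)\mathcal{Q}_{c,p}^{-1}$. Here I use that $\mathcal{Q}_{c,s}(\overline T)=\mathcal{Q}_{c,s}(T)$, since the expression is symmetric in $T,\overline T$. Consequently $S^{-1}_{\Delta,L}(p,\overline T)=-8S^{-1}_L(p,\overline T)\mathcal{Q}_{c,p}^{-1}(T)$, and multiplying by $16$ yields \eqref{DDelta-resolvent eq1}.

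The main obstacle is the cancellation of the mixed terms. It relies on the noncommuting scalars $p$ and $s$ sitting on the correct sides so that \eqref{sl} and \eqref{sr} apply directly. The scalar $p$ appears to the right of $S^{-1}_L(p,T)\mathcal{Q}_{c,p}^{-1}$, so I must use that $\mathcal{Q}_{c,p}^{-1}(T)$ commutes with $p$ (Lemma \ref{Lemmastep0}-type intrinsic property) to move it adjacent to $S^{-1}_L(p,T)$. I also need to check the constants $16\cdot\frac1{32}=\frac12$ carefully.
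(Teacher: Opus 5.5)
Your route is the paper's. You start from \eqref{c10} and collapse the three mixed terms with \eqref{sl} and \eqref{sr}. The paper phrases this step through \eqref{rK1} and \eqref{rK2} and obtains $-\frac18\mathcal{Q}_{c,s}^{-1}(T)S^{-1}_{\Delta,L}(p,T)-\frac18S^{-1}_{\Delta,R}(s,T)\mathcal{Q}_{c,p}^{-1}(T)$, which is exactly your $\mathcal{Q}_{c,s}^{-1}(T)[S^{-1}_L(p,T)+S^{-1}_R(s,T)]\mathcal{Q}_{c,p}^{-1}(T)$. You then absorb the two $\underline{T}$-terms using $\overline{T}+2\underline{T}=T$ and multiply by $16$. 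Your derivation of the first identity, constants included, is correct. Your opening remark that all the operators commute is too strong, since $\underline{T}$ does not commute with $\mathcal{Q}_{c,s}^{-1}(T)$ in general. However, you only use the legitimate commutations of $s$ with $\mathcal{Q}_{c,s}^{-1}(T)$ and of $p$ with $\mathcal{Q}_{c,p}^{-1}(T)$.

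In the second identity, your two absorptions are crossed, and each is false as written. We have
\[
\mathcal{Q}_{c,s}^{-1}(T)S^{-1}_R(s,T)\mathcal{Q}_{c,p}^{-1}(T)=\mathcal{Q}_{c,s}^{-2}(T)(s-\overline{T})\mathcal{Q}_{c,p}^{-1}(T),\qquad \mathcal{Q}_{c,s}^{-1}(T)S^{-1}_L(p,T)\mathcal{Q}_{c,p}^{-1}(T)=\mathcal{Q}_{c,s}^{-1}(T)(p-\overline{T})\mathcal{Q}_{c,p}^{-2}(T).
\]
So turning $\overline{T}$ into $T$ in the first expression requires $-2\mathcal{Q}_{c,s}^{-2}(T)\underline{T}\mathcal{Q}_{c,p}^{-1}(T)$, and in the second it requires $-2\mathcal{Q}_{c,s}^{-1}(T)\underline{T}\mathcal{Q}_{c,p}^{-2}(T)$. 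This is the reverse of what you wrote. The two terms are genuinely different: for $T=x_1e_1$ with $x_1\neq 0$ and real $s,p$ with $s^2\neq p^2$, their difference is a nonzero multiple of $x_1e_1$. Hence, for instance, $\mathcal{Q}_{c,s}^{-1}(T)S^{-1}_R(s,T)\mathcal{Q}_{c,p}^{-1}(T)-2\mathcal{Q}_{c,s}^{-1}(T)\underline{T}\mathcal{Q}_{c,p}^{-2}(T)\neq\mathcal{Q}_{c,s}^{-1}(T)S^{-1}_R(s,\overline{T})\mathcal{Q}_{c,p}^{-1}(T)$ in general. Both $\underline{T}$-terms are absorbed and only their sum enters the identity, so your final conclusion still holds. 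Swap the pairing, as the paper does in the two identities at the end of its proof, and the argument is complete.
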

	
	\begin{proof}
		First we prove \eqref{DDelta-resolventequation}. By using \eqref{rK1} and \eqref{rK2} in the left-hand side of \eqref{c10}, we obtain
		\begin{align}
			\label{c11}
			&S_R^{-1}(s,T) \mathcal{Q}_{c,p}^{-2}(T)+\mathcal{Q}_{c,s}^{-2}(T)S_L^{-1}(p,T)- \mathcal{Q}_{c,s}^{-1}(T) S_R^{-1}(s,T) T S_L^{-1}(p,T) \mathcal{Q}_{c,p}^{-1}(T)\\
			\nonumber
			&- \mathcal{Q}_{c,s}^{-1}(T) S_R^{-1}(s,T) T S_L^{-1}(p,T) \mathcal{Q}_{c,p}^{-1}(T) +\mathcal{Q}_{c,s}^{-1}(T)S^{-1}_R(s,T)S_{L}^{-1}(p,T) \mathcal{Q}_{c,p}^{-1}(T)p\\
			\nonumber
			&+s\mathcal{Q}_{c,s}^{-1}(T)S^{-1}_R(s,T)S_{L}^{-1}(p,T) \mathcal{Q}_{c,p}^{-1}(T)- 2\mathcal{Q}_{c,s}^{-1}(T) \underline{T} \mathcal{Q}_{c,p}^{-2}(T)- 2\mathcal{Q}_{c,s}^{-2}(T) \underline{T} \mathcal{Q}_{c,p}^{-1}(T)\\
			\nonumber
			=& S_R^{-1}(s,T) \mathcal{Q}_{c,p}^{-2}(T)+\mathcal{Q}_{c,s}^{-2}(T)S_L^{-1}(p,T)+\frac{1}{64}[s S^{-1}_{ \Delta,R}(s,T)-S^{-1}_{\Delta,R}(s,T)T]S^{-1}_{\Delta,L}(p,T)\\
			\nonumber
			& +\frac{1}{64}S^{-1}_{ \Delta,R}(s,T)[S^{-1}_{\Delta,L}(p,T)p-TS^{-1}_{\Delta,L}(p,T)] - 2\mathcal{Q}_{c,s}^{-1}(T) \underline{T} \mathcal{Q}_{c,p}^{-2}(T)- 2\mathcal{Q}_{c,s}^{-2}(T) \underline{T} \mathcal{Q}_{c,p}^{-1}(T)\\
			\nonumber
			=&  	S^{-1}_R(s,T) \mathcal{Q}_{c,p}^{-2}(T)- 2\mathcal{Q}_{c,s}^{-1}(T) \underline{T} \mathcal{Q}_{c,p}^{-2}(T)+\mathcal{Q}_{c,s}^{-2}(T)S^{-1}_L(p,T)- 2\mathcal{Q}_{c,s}^{-2}(T) \underline{T} \mathcal{Q}_{c,p}^{-1}(T)\\
			\nonumber
			&-\frac{1}{8}\mathcal{Q}_{c,s}^{-1}(T) S^{-1}_{ \Delta,L}(p,T)-\frac{1}{8}S^{-1}_{ \Delta,R}(s,T) \mathcal{Q}_{c,p}^{-1}(T)\\
			\nonumber
			=&S^{-1}_R(s,\overline{T}) \mathcal{Q}_{c,p}^{-2}(T)+\mathcal{Q}_{c,s}^{-2}(T)S^{-1}_L(p,\overline{T})-\frac{1}{8}\mathcal{Q}_{c,s}^{-1}(T) S^{-1}_{ \Delta,L}(p,T)-\frac{1}{8}S^{-1}_{ \Delta,R}(s,T) \mathcal{Q}_{c,p}^{-1}(T).
		\end{align}
		Substituting \eqref{c11}, into \eqref{c10} and multiplying by $16$, we obtain \eqref{DDelta-resolvent eq.}. Finally, \eqref{DDelta-resolvent eq1} follows from the penultimate equalities in \eqref{c11}, \eqref{c10} and by observing that
		$$
		\frac{1}{8} \mathcal{Q}_{c,s}^{-1}(T) S^{-1}_{\Delta,L}(p,T) + 2 \mathcal{Q}_{c,s}^{-1}(T) \,\underline{T}\, \mathcal{Q}_{c,p}^{-2}(T) = \frac{1}{8} \mathcal{Q}_{c,s}^{-1}(T) S^{-1}_{\Delta,L}(p, \overline{T}),
		$$
		$$
		\frac{1}{8} S^{-1}_{\Delta,R}(s,T)\, \mathcal{Q}_{c,p}^{-1}(T) + 2 \mathcal{Q}_{c,s}^{-2}(T) \,\underline{T}\, \mathcal{Q}_{c,p}^{-1}(T) = \frac{1}{8} S^{-1}_{\Delta,R}(p, \overline{T}) \, \mathcal{Q}_{c,p}^{-1}(T).
		$$
	\end{proof}

	\begin{theorem}[Product rule for the harmonic functional calculus]\label{DDelta prod. rule}
		Let $T\in \mathcal{BC}^{0,1}(V_5)$ be such that its components $T_i$, $i=1,2,3,4,5$ have real spectra and $T_0=0$.
		Suppose that $f\in\mathcal{N}_{\sigma_S(T)}(U)$ and $g\in\mathcal{SH}^L_{\sigma_S(T)}(U)$. Then, we have
		\begin{align}
			\label{one}
			(fg)_{(D\Delta) }(T)&= f(\overline{T}) g_{D\Delta}(T) +  f_{D\Delta}(T) g(\overline{T}) + \frac{1}{2}f_D(T)g_\Delta(T)+\frac{1}{2}f_\Delta(T)g_D(T)\\
			\label{second}
			&=f(T) g_{D\Delta}(T) +  f_{D\Delta}(T) g(T) + \frac{1}{2}f_D(T)g_\Delta(\overline{T})+\frac{1}{2}f_\Delta(\overline{T})g_D(T).
		\end{align}
	\end{theorem}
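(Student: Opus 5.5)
We follow the scheme of the proof of Theorem \ref{D prod. rule}, now starting from the resolvent equation of Theorem \ref{DDelta-resolventequation}. Choose bounded slice Cauchy domains $G_1,G_2$ with $\sigma_S(T)\subset G_1$, $\overline{G}_1\subset G_2$, $\overline{G}_2\subset \mathrm{dom}(f)\cap\mathrm{dom}(g)$. Take $p\in\partial(G_1\cap\mathbb{C}_I)$ and $s\in\partial(G_2\cap\mathbb{C}_I)$.

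For \eqref{one}, we write each of the four products on the right-hand side as a double integral: the factor involving $f$ as an integral in $s$ over $\partial(G_2\cap\mathbb{C}_I)$ with right resolvents, the factor involving $g$ as an integral in $p$ over $\partial(G_1\cap\mathbb{C}_I)$ with left resolvents. Since $f$ is intrinsic, $f(\overline{T})$, $f_{D\Delta}(T)$, $f_D(T)$, $f_\Delta(T)$ may all be represented with the right kernels $S_R^{-1}(s,\overline{T})$, $S^{-1}_{D\Delta,R}(s,T)$, $S^{-1}_{D,R}(s,T)$, $S^{-1}_{\Delta,R}(s,T)$ (cf. \cite[Prop. 5.11]{CDP25}, \cite[Thm. 3.2.11]{CGK}). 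The functional calculus applied to $\overline{T}$ is legitimate because $\sigma_S(\overline{T})=\sigma_S(T)$: indeed $\mathcal{Q}_{c,s}(\overline{T})=\mathcal{Q}_{c,s}(T)$. Summing, the integrand becomes
$$f(s)\,ds_I\,[\text{LHS of }\eqref{DDelta-resolvent eq.}]\,dp_I\,g(p),$$
with the coefficients $\tfrac12$ matching. We then replace the bracket by the right-hand side of \eqref{DDelta-resolvent eq.}.

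Next we split the right-hand side into its $s$-part and its $p$-part.
\begin{itemize}
\item In the terms $S^{-1}_{D\Delta,L}(s,T)\,p\,Q_s^{-1}(p)$ and $\bar s\, S^{-1}_{D\Delta,L}(s,T)\,Q_s^{-1}(p)$, the $p$-integral of $pQ_s^{-1}(p)\,dp_I\,g(p)$ vanishes. This holds because $p\mapsto pQ_s^{-1}(p)$ is intrinsic and slice hyperholomorphic on $\overline{G}_1$ (its zeros lie on $[s]\subset\partial G_2$), so \cite[Cor. 2.8.3]{ColomboSabadiniStruppa2011} applies.
\item The term $\bar s\,S^{-1}_{D\Delta,L}(s,T)Q_s^{-1}(p)$ does not vanish; it combines with the $p$-part, as in the biharmonic case. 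What remains is
$$\int\!\!\int f(s)\,ds_I\,[\bar s B - Bp]\,Q_s^{-1}(p)\,dp_I\,g(p), \qquad B=S^{-1}_{D\Delta,L}(p,T),$$
which matches exactly the configuration of the biharmonic proof.
\end{itemize}
Applying \cite[Lemma 3.18]{ACGS} to the $s$-integral yields $2\pi B f(p)$. Here $B=16\mathcal{Q}_{c,p}^{-2}(T)$ commutes appropriately with $f(p)$ and with $p$, since both are intrinsic in $p$. We therefore obtain
$$\frac{1}{2\pi}\int_{\partial(G_1\cap\mathbb{C}_I)} S^{-1}_{D\Delta,L}(p,T)\,dp_I\, f(p)g(p)=(fg)_{D\Delta}(T).$$
Identity \eqref{second} follows in exactly the same way from \eqref{DDelta-resolvent eq1}. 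Now $f(T)$ and $g(T)$ come from $S_R^{-1}(s,T)$ and $S_L^{-1}(p,T)$, while $f_\Delta(\overline{T})$ and $g_\Delta(\overline{T})$ come from the $\Delta$-kernels evaluated at $\overline{T}$.

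The main obstacle is bookkeeping rather than analysis. We must check that each kernel in the resolvent equation appears on the correct side, i.e. right kernels in $s$ next to $f$ and left kernels in $p$ next to $g$. In a few spots the statement of Theorem \ref{DDelta-resolventequation} writes $L/R$ labels loosely, and since $S^{-1}_{D\Delta,L}=S^{-1}_{D\Delta,R}$ and $S^{-1}_{D,L}=S^{-1}_{D,R}$ this causes no harm. The $\Delta$-kernels, however, genuinely differ on the two sides, so there the correct choice must be made. We must also confirm that the integral representation of $f_\Delta$ and of $f(\overline{T})$ via right kernels is valid because $f$ is intrinsic. Finally, the interchange of integration order is justified by continuity of all integrands on the compact product of contours.
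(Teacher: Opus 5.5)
Your route is the paper's own: the paper proves both identities by rerunning the biharmonic product-rule argument on the two harmonic resolvent equations. Several parts of your proposal are correct:
\begin{itemize}
\item the matching of the four operator products with the four left-hand terms;
\item the use of intrinsicness of $f$ to pass to right kernels, including $f(\overline{T})$ via $\mathcal{Q}_{c,s}(\overline{T})=\mathcal{Q}_{c,s}(T)$;
\item the placement of the $L$/$R$ $\Delta$-kernels.
\end{itemize}

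\textbf{The error.} One step is stated wrongly. Write $S(\cdot)=S^{-1}_{D\Delta,L}(\cdot,T)$. The right-hand side expands as
\[
S(s)\,p\,Q_s^{-1}(p)-\bar s\,S(s)\,Q_s^{-1}(p)-S(p)\,p\,Q_s^{-1}(p)+\bar s\,S(p)\,Q_s^{-1}(p).
\]
Both terms carrying $S(s)$ vanish after the $p$-integration. Not only $p\mapsto pQ_s^{-1}(p)$ but also $p\mapsto Q_s^{-1}(p)$ is intrinsic slice hyperholomorphic near $\overline{G}_1$; its singularities (not zeros) lie on $[s]\subset\partial G_2$. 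So the Cauchy integral theorem also gives $\int Q_s^{-1}(p)\,dp_I\,g(p)=0$.

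Your claim that $\bar s\,S^{-1}_{D\Delta,L}(s,T)Q_s^{-1}(p)$ ``does not vanish'' and ``combines with the $p$-part'' is therefore false. It also could not be used: Lemma 3.18 of ACGS is an $s$-integral in which $B$ must be independent of $s$, so a surviving $\bar s\,S(s)$ term would not fit it. The $\bar s$ in your remainder $[\bar sB-Bp]Q_s^{-1}(p)$, with $B=S(p)$, comes from the last term of the expansion, i.e.\ from the $p$-part itself. The paper's biharmonic proof has the same $s$/$p$ slip in an intermediate display, and in its choice of $B$.

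\textbf{How to close it.} Your final displayed remainder is the correct one, so once this justification is replaced the argument goes through. Lemma 3.18 gives $2\pi\,Bf(p)$, valid for any bounded $B$. What you then need is $f(p)\,dp_I=dp_I\,f(p)$, which holds because $f$ is intrinsic, so $f(p)\in\mathbb{C}_I$. No commutation of $B$ with $f(p)$ or with $p$ is required.
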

	\begin{proof}
		The identity \eqref{one} follows from \eqref{DDelta-resolvent eq.} by applying arguments analogous to those used in Theorem~\ref{D prod. rule}. Similarly, the identity \eqref{second} follows from \eqref{DDelta-resolvent eq1}, again using arguments analogous to those employed in Theorem~\ref{D prod. rule}.
	\end{proof}
	Also for the harmonic functional calculus, it is possible to define counterparts of the Riesz projectors. The following theorem can be proved using arguments similar to those in Theorem~\ref{D-Riesz projectors}.

	\begin{theorem}\label{DDelta-Riesz projectors}
		Under the same assumptions as Theorem \ref{D-Riesz projectors}, the following operator is a projector
		\begin{equation}
			P := \frac{1}{8\pi} \int_{\partial (G_1\cap\mathbb{C}_I)}  S^{-1}_{D\Delta,R}(p,T) dp_I p^3 = \frac{1}{8\pi} \int_{\partial (G_2\cap\mathbb{C}_I)}   s^3 ds_I   S^{-1}_{D\Delta,L}(s,T).
		\end{equation}
	\end{theorem}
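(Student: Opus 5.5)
The plan is to mimic the proof of Theorem \ref{D-Riesz projectors}, starting from the resolvent equation of Theorem \ref{DDelta-resolventequation}, or equivalently from its unsimplified form \eqref{c10}. Throughout, $S^{-1}_{D\Delta}(p,T)=16\mathcal{Q}_{c,p}^{-2}(T)$. Let $\Phi(s,p,T)$ denote the left-hand side of \eqref{c10}. I would multiply \eqref{c10} on the right by $p^3$, possibly also on the left by a suitable power $s^k$ (fixed in the last step). Then I would integrate with respect to $ds_I$ over $\partial(G_2\cap\mathbb{C}_I)$ and with respect to $dp_I\,p^3$ over $\partial(G_1\cap\mathbb{C}_I)$. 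The aim is to show that the integrated left-hand side equals a constant times $P^2$ and the integrated right-hand side equals the same constant times $P$.

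\emph{Right-hand side.} Fix $s\in\partial(G_2\cap\mathbb{C}_I)$. Then $s\notin\overline{G_1}$, so $p\mapsto p^jQ_s^{-1}(p)$ is intrinsic slice hyperholomorphic on $\overline{G_1}$. By the Cauchy integral theorem \cite[Cor. 2.8.3]{ColomboSabadiniStruppa2011}, the terms containing $\mathcal{Q}^{-2}_{c,s}(T)$ times $p\,Q_s^{-1}(p)$ or $Q_s^{-1}(p)$ vanish after the $p$-integration. This leaves
\[
\int\!\!\int ds_I\,[\bar s\,\mathcal{Q}^{-2}_{c,p}(T)-\mathcal{Q}^{-2}_{c,p}(T)p]\,Q_s^{-1}(p)\,dp_I\,p^3 .
\]
I would then apply \cite[Lemma 3.18]{ACGS} with $B=\mathcal{Q}^{-2}_{c,p}(T)$, which gives $2\pi\int_{\partial(G_1\cap\mathbb{C}_I)}\mathcal{Q}^{-2}_{c,p}(T)\,dp_I\,p^3$. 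This is a fixed multiple of $P$, by exactly the computation used for \eqref{p11}.

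\emph{Left-hand side.} I would rewrite every operator in $\Phi$ using $S_R^{-1}(s,T)=\mathcal{Q}_{c,s}^{-1}(T)(s-\overline{T})$ and $S^{-1}_L(p,T)=(p-\overline T)\mathcal{Q}_{c,p}^{-1}(T)$. This turns each term into a sum of products
\[
s^a\,\mathcal{Q}^{-i}_{c,s}(T)\;\Pi(T,\overline{T})\;\mathcal{Q}^{-j}_{c,p}(T)\,p^b ,
\]
where $i,j\in\{1,2,3\}$ and $\Pi$ is a polynomial in $T,\overline T$. All these operators commute, and $s,p$ may be moved across the $\mathcal{Q}$'s by Lemma \ref{Lemmastep0}, so the double integral factorizes into an $s$-integral times a $p$-integral. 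The key vanishing facts come from the fine-structure calculi identified as in \cite[Thm. 5.12]{CDP25}. Since $D(x^j)=0$ for $j=0$, and $D\Delta_6(x^j)=0$ for $j\le 2$, the integrals $\int \mathcal{Q}^{-1}_{c,s}(T)\,ds_I\,s^j$ vanish for $j=0$, and $\int\mathcal{Q}^{-2}_{c,s}(T)\,ds_I\,s^j$ vanish for $j\le 2$. The same holds in $p$. The integrals of $\mathcal{Q}^{-3}$ times low powers, coming from the $F$-resolvent, vanish similarly because $\Delta_6^2(x^j)=0$ for $j\le 3$. Together with the choice of weights, these facts should kill every product except the single term proportional to
\[
\int \mathcal{Q}^{-2}_{c,s}(T)\,ds_I\,s^3\,\int\mathcal{Q}^{-2}_{c,p}(T)\,dp_I\,p^3 .
\]
Up to the normalising constant this equals $P^2$, using the second representation of $P$ in the statement. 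That second representation, as an integral over $\partial(G_2\cap\mathbb{C}_I)$, follows from independence of the domain and from $S_{D\Delta,L}^{-1}=S_{D\Delta,R}^{-1}$.

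\emph{Main obstacle.} The hard step is this bookkeeping on the left-hand side. Unlike the biharmonic case, where only $\mathcal{Q}^{-1}$'s appear and the weight $p$ alone suffices, \eqref{c10} mixes $\mathcal{Q}^{-1}$, $\mathcal{Q}^{-2}$ and the mixed terms $\mathcal{Q}^{-1}_{c,s}S_R^{-1}\,(\cdot)\,S_L^{-1}\mathcal{Q}^{-1}_{c,p}$. I would first determine the correct weights, either $p^3$ alone or $p^3$ combined with a power of $s$ on the left, by listing the $(s\text{-degree},p\text{-degree},\mathcal{Q}\text{-power})$ of each monomial. I would then check with the vanishing rules above that exactly one monomial survives. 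Finally I would verify that the constants produced by the left- and right-hand sides match the factor $\tfrac{1}{8\pi}$, giving $P^2=P$.
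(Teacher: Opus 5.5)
Your strategy of integrating \eqref{c10} against polynomial weights and comparing the two sides is the one the paper has in mind when it calls the proof analogous to Theorem~\ref{D-Riesz projectors}. As planned, however, it cannot close, for two concrete reasons.

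\textbf{The constant in the statement is off by a factor $4$.} Your last step (``verify that the constants match $\tfrac{1}{8\pi}$'') must fail. Since $S^{-1}_{D\Delta,R}(p,T)=16\mathcal{Q}^{-2}_{c,p}(T)$, the stated operator is $P=\frac{2}{\pi}\int_{\partial(G_1\cap\mathbb{C}_I)}\mathcal{Q}^{-2}_{c,p}(T)\,dp_I\,p^3$.

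Take $V=\mathbb{R}^2$ and $T=e_1T_1$ with $T_1=\mathrm{diag}(\lambda,\mu)$, $0<\lambda<\mu$, and let $G_1$ enclose only the sphere of radius $\lambda$. On the first coordinate $\mathcal{Q}_{c,p}(T)=p^2+\lambda^2$, so $\frac{1}{2\pi}\int\mathcal{Q}^{-2}_{c,p}(T)\,dp_I\,p^3=\frac{1}{2\pi I}\oint\frac{p^3\,dp}{(p^2+\lambda^2)^2}=1$. On the second coordinate the integral is $0$. Hence $P=4E$ with $E$ the coordinate projection, and $P^2=16E\neq P$. Equivalently, $\frac{1}{2\pi}\int 16\mathcal{Q}^{-2}_{c,p}(T)\,dp_I\,p^3$ is the harmonic calculus of $x^3$, and $D\Delta_6x^3=16$; so over a domain containing all of $\sigma_S(T)$ the stated $P$ equals $4\mathcal{I}$.

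In your own bookkeeping, the right side $2\pi\int\mathcal{Q}^{-2}_{c,p}(T)\,dp_I\,p^3$ equals $\pi^2P$, while the product you want to isolate equals $\frac{\pi^2}{4}P^2$. You would therefore land on $P^2=4P$. The actual idempotent is
$\Pi:=\frac{1}{32\pi}\int S^{-1}_{D\Delta,R}(p,T)\,dp_I\,p^3=\frac{1}{2\pi}\int\mathcal{Q}^{-2}_{c,p}(T)\,dp_I\,p^3$.

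Leaning on ``exactly the computation used for \eqref{p11}'' does not rescue the constant. In \eqref{int} the left side is that of \eqref{preres}, while the right side is that of \eqref{corol D-resolent}, i.e.\ $-4$ times the right side of \eqref{preres}.

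\textbf{The weights you propose do not isolate a square.} With $p^3$ on the right only, your vanishing rules are $\int\mathcal{Q}^{-1}_{c,s}(T)\,ds_I=0$ and $\int\mathcal{Q}^{-2}_{c,s}(T)\,ds_I\,s^j=0$ for $j\le2$. They kill everything except the piece $s\,\mathcal{Q}^{-1}_{c,s}(T)\mathcal{Q}^{-2}_{c,p}(T)$ of $S^{-1}_R(s,T)\mathcal{Q}^{-2}_{c,p}(T)$. The resulting identity is $\Pi_D\Pi=\Pi$ with $\Pi_D=\frac{1}{2\pi}\int ds_I\,s\,\mathcal{Q}^{-1}_{c,s}(T)$. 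That is a mixed biharmonic/harmonic product, not $\Pi^2=\Pi$.

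Putting $s^k$ on the left while keeping $p^3$ on the right breaks your right-hand computation. \cite[Lemma 3.18]{ACGS} with $f(s)=s^k$ produces $2\pi\int\mathcal{Q}^{-2}_{c,p}(T)\,dp_I\,p^{3+k}$. For $k=1$ this vanishes, since $D\Delta_6x^4=64x_0$ and $T_0=0$. For $k=2$ it equals $-2\,T\overline{T}$ times $2\pi\int\mathcal{Q}^{-2}_{c,p}(T)\,dp_I\,p^3$. In addition, the $\mathcal{Q}^{-2}\cdot\mathcal{Q}^{-2}$ terms that then survive carry factors such as $\overline{T}$, $T\overline{T}$ or $\overline{T}^2$.

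The missing idea is to split the total degree $3$ between the two variables. Multiply \eqref{c10} on the left by $s$ and on the right by $p^2$; the pair $s^2$, $p$ also works.
\begin{itemize}
\item Right side: after the Cauchy-theorem step, Lemma 3.18 with $f(s)=s$ gives $2\pi\int\mathcal{Q}^{-2}_{c,p}(T)\,p\,dp_I\,p^2=(2\pi)^2\Pi$.
\item Left side: using only $\int\mathcal{Q}^{-2}_{c,s}(T)\,ds_I\,s^j=0=\int\mathcal{Q}^{-2}_{c,p}(T)\,dp_I\,p^j$ for $j\le 2$, every term dies except one. In
$s\,\mathcal{Q}^{-1}_{c,s}(T)S^{-1}_R(s,T)S^{-1}_L(p,T)\mathcal{Q}^{-1}_{c,p}(T)=s\,\mathcal{Q}^{-2}_{c,s}(T)(s-\overline{T})(p-\overline{T})\mathcal{Q}^{-2}_{c,p}(T)$,
only the part $s\,\mathcal{Q}^{-2}_{c,s}(T)\,s\,p\,\mathcal{Q}^{-2}_{c,p}(T)$ survives.
\item That surviving part integrates to $\int s^3\mathcal{Q}^{-2}_{c,s}(T)\,ds_I\cdot\int\mathcal{Q}^{-2}_{c,p}(T)\,dp_I\,p^3=(2\pi)^2\Pi^2$.
\end{itemize}
This gives $\Pi^2=\Pi$, with normalising constant $\frac{1}{32\pi}$ rather than the stated $\frac{1}{8\pi}$.
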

	
	{\bf Data availability}. The research in this paper does not imply use of data.

{\bf Conflict of interest}. The authors declare that there is no conflict of interest.

\end{document}